\DeclareMathAlphabet\mathbfcal{OMS}{cmsy}{b}{n}
\newcommand{\aA}{{\cal A}}
\newcommand{\KK}{{\cal K}}
\newcommand{\MM}{{\cal M}}
\newcommand{\LL}{{\cal L}}
\newcommand{\ty}{\infty}
\newcommand{\lag}{\langle}
\newcommand{\rag}{\rangle}
\newcommand{\la}{\lambda}
\newcommand{\DD}{{\cal D}}
\newcommand{\Z}{{\mathbb Z}}
\newcommand{\FF}{{\cal F}}
\newcommand{\IP}{{\mathbb P}}
\newcommand{\mek}{{\mathbf 1}}
\newcommand{\BBBBB}{{\mathcal B}}
\newcommand{\be}{\begin{equation}}
\newcommand{\ee}{\end{equation}}
\newcommand{\bp}{\begin{proof}}
\newcommand{\ep}{\end{proof}}
\newcommand{\bi}{\begin{itemize}}
\newcommand{\ei}{\end{itemize}}
\newcommand{\om}{\omega}
\newcommand{\mM}{\Lambda}
\newcommand{\pP}{\mathbb{P}}
\newcommand{\aaa}{{\cal A}}
\newcommand{\E}{\mathbb{E}}
\newcommand{\ccc}{{\cal C}}
\newcommand{\iii}{{\cal I}}
\newcommand{\kkk}{{\cal K}}
\newcommand{\mmm}{{\cal M}}
\newcommand{\nnn}{{\cal N}}
\newcommand{\ppp}{{\cal P}}
\newcommand{\PPPP}{\mathfrak P}
\newcommand{\C}{{\mathbb C}}
\newcommand{\e}{\mathbb{E}}
\newcommand{\pp}{\mathbb{P}}
\newcommand{\rr}{\mathbb{R}}
\newcommand{\PP}{{\cal P}}
\newcommand{\PPP}{\mathbf{P}}
\newcommand{\q}{\quad}
\newcommand{\f}{\frac}
\newcommand{\lm}{\lambda}
\newcommand{\p}{\partial}
\newcommand{\De}{\delta}
\newcommand{\diver}{\mathop{\rm div}\nolimits}
\newcommand{\supp}{\mathop{\rm supp}\nolimits}
\newcommand{\diam}{\mathop{\rm diam}\nolimits}
\newcommand{\Osc}{\mathop{\rm Osc}\nolimits}
\newcommand{\es}{\varepsilon}
\newcommand{\al}{\alpha}
\newcommand{\ch} {\mathbbm{1}}
\newcommand{\iin} {\infty}
\newcommand{\ef}{\eqref}
\newcommand{\dd}{\,{\textup d}}
\newcommand{\SSS}{{\mathscr S}}
\theoremstyle{plain}
\newtheorem{theorem}{Theorem}[section]
\newtheorem{lemma}[theorem]{Lemma}
\newtheorem{proposition}[theorem]{Proposition}
\theoremstyle{definition}
\theoremstyle{remark}
\newtheorem{remark}[theorem]{Remark}
\numberwithin{equation}{section}
\newtheorem*{definition*}{Definition}
\newtheorem*{problem*}{Problem}
\newtheorem*{remark*}{Remark}
\newtheorem*{note*}{Note}
\begin{document}
\author{D.~Martirosyan\footnote{Inria of Paris, 2 rue Simone Iff, 75012 Paris, France; e-mail: \href{mailto:martirosyan.davit@gmail.com}{Martirosyan.Davit@gmail.com}} \and V.~Nersesyan\footnote{Laboratoire de Math\'ematiques, UMR CNRS 8100, UVSQ, Universit\'e Paris-Saclay,  45  Av.  des Etats-Unis, 78035, Versailles, France;  e-mail: \href{mailto:Vahagn.Nersesyan@math.uvsq.fr}{Vahagn.Nersesyan@math.uvsq.fr}} 
}

\title{Multiplicative ergodic theorem for a non-irreducible random dynamical system}

\date{\today}

 \maketitle

\begin{abstract}

We study the asymptotic properties of the trajectories of a  discrete-time   random dynamical system  in an infinite-dimensional Hilbert space.  Under some natural assumptions on the model, we establish a  multiplicative ergodic theorem with an exponential rate of convergence.~The   assumptions are satisfied for a large class of parabolic PDEs, including the 2D  Navier--Stokes and  complex Ginzburg--Landau equations perturbed by a non-degenerate bounded random kick force.~As a consequence of this ergodic theorem,  we derive   some new results  on  the  statistical properties of the trajectories  of the underlying  random dynamical system.
In particular, we   obtain  large deviations principle  for the occupation measures and   the analyticity of the pressure function in a setting where the system is not irreducible.~The proof relies on a refined version of the uniform Feller  property combined with some  contraction and   bootstrap arguments.

 \smallskip  
\noindent
{\bf AMS subject classifications:}  35Q30, 35Q56, 60B12, 37H15

\smallskip
\noindent
{\bf Keywords:}    Navier--Stokes, Ginzburg--Landau, and Burgers equations, multiplicative ergodic theorem, Feynman--Kac semigroup,   Kantorovich distance, large deviations. 
\end{abstract}


\setcounter{section}{-1}

 \section{Introduction} 
This paper continues the study, initiated in \cite{JNPS-2012}, of the   multiplicative ergodicity   for
     the  following discrete-time random dynamical system 
\begin{equation} \label{E:0.1}
u_k=S(u_{k-1})+\eta_k, \quad k\ge1. 
\end{equation}Here   $\{\eta_k\}$ is a sequence of bounded  independent identically distributed (i.i.d)  random variables in a   separable Hilbert space~$H$ and
$S:H\to H$ is a continuous mapping subject to some natural assumptions.~Without going into formalities here, let us only mention that these assumptions ensure that   $S$ is locally Lipschitz possessing some dissipative and regularising properties,  and that the   random perturbation $\{\eta_k\}$ is non-degenerate  (see   Conditions (A)-(D) in the next section).
 A large  class of  dissipative PDEs with some discrete random  perturbation can be written in the form \ef{E:0.1}. This class includes 
     the 2D  Navier--Stokes  and  the complex Ginzburg--Landau   equations   (see Section \ref{S:5.5}).
 
 \smallskip
 System \eqref{E:0.1} defines   a homogeneous family of Markov  chains~$(u_k,\pP_{u})$ para\-met\-rised by the initial condition.~It is well known that this family 
  admits~a~unique stationary measure~$\mu$ which is {\it exponentially mixing} in the sense that~for   any $1$-Lipschitz function $f:H\to \rr$  and any initial condition $u_0=u \in H$, we~have  
 \begin{equation}\label{E:0.2}
  \left|\E_u f(u_k)- \int_H f(v)\mu(\!\dd v) \right|\le C (1+\|u\|) e^{-\alpha k} \quad k\ge0,
 \end{equation} where $C$ and $\alpha$ are some   positive numbers. 
We   refer the reader to the papers~\cite{FM-1995,KS-cmp2000,EMS-2001,BKL-2002} for the first results of this type    and  Chapter~3 of the book~\cite{KS-book}    for    details on the problem of ergodicity for~\eqref{E:0.1}.

  \smallskip
  In this paper,    motivated by applications to  {\it large deviations},  we consider the asymptotic behavior of the product 
\begin{equation}\label{E:0.3}
\Xi_k^Vf =f(u_k)\exp\left(\sum_{n=1}^kV(u_n)\right),
  \end{equation}
 where $V:H\to\rr$ is a  given bounded Lipschitz-continuous function (potential). We   say that~$(u_k,\pP_{u})$  satisfies a {\it multiplicative ergodic theorem} (MET)    if we have a limit     of the form 
 \begin{equation}\label{E:0.4}
\la_V^{-k} \,\PPPP_k^Vf(u)\to h_V(u)\int_H f(v) \mu_V(\!\dd v) \quad\text{ as $k\to \infty$}
  \end{equation}  for some number $\lambda_V>0$,   positive continuous function $h_V:H\to \rr $, and a Borel probability measure $\mu_V$   not depending on $f$ and $u$, where $\PPPP_k^V$ is the {\it Feynman--Kac semigroup} associated with potential $V$, that is $\PPPP_k^Vf(u)=\e_u\left\{ \Xi_k^Vf\right\}$.   This is a form of Oseledec's theorem obtained in \cite{O68}  (see also \cite{Ruelle, Mane, Kening}).

\smallskip
 Let us recall that, under Conditions~(A)-(D),  a MET   is established in 
\cite{JNPS-2012}, in the case when the initial point~$u$ belongs to the support $\aA$  of the stationary measure~$\mu$. The set $\aA$   is  compact     in~$H$ and it is  the smallest invariant set for  system~\eqref{E:0.1}. We extend this result in three directions.~Our first main result shows that under  the same  hypotheses,   convergence 
\ef{E:0.4}    is exponential for $u\in \aaa$.~We next show that if the oscillation of the potential $V$ is sufficiently small, then the convergence  holds true for any~$u\in H$. Moreover, the rate of convergence is again exponential. Finally, our third   result proves that this restriction on the oscillation of $V$ can be dropped if the operator $S$  is a  subcontraction\,\footnote{ See Condition~(E) in the next section for the definition and Section \ref{S:5.5} for examples.}
  with respect to a certain    metric whose topology is weaker than the natural one on~$H$.

\smallskip 

We provide a section with different applications.   Probably the most important one  is the large deviations principle  (LDP).~Our multiplicative ergodic theorems allow   to prove the existence and   analyticity of the pressure function.~Combining this  with  the  G\"artner--Ellis theorem, the Kifer's criterion (see~\cite{kifer-1990}),      and some techniques developed in~\cite{JNPS-2012}, we obtain~new local and global large deviations results for the occupation measures    in a  {\it non-irreducible}\,\footnote{Recall that~$(u_k,\pP_{u})$ is irreducible, if for any $u\in H$ and any ball $B\subset H$, we have $\pP_u\{u_l\in B\}>0$ for some $l\ge1$.} setting.     Previously, the   LDP 
  for randomly forced PDEs has been studied   in the papers~\cite{gourcy-2007a,gourcy-2007b,MN-2015}, in the continuous-time, and in~\cite{JNPS-2012,JNPS-2013,JNPS-2014}, for the  discrete-time case. Let us also mention   the papers~\cite{wu-2001, MT-2005},  where~the
  LDP  is derived from the MET 
   for    Markov processes possessing  the strong~Feller
    property. 
    In all these references the underlying Markov processes are~{\it irreducible}.
   
\smallskip
  We also give two other applications~related to the   random time in the~strong law of large~numbers and the speed of attraction of the support of the stationary~measure.

\smallskip
  Let us     outline some ideas behind the proof of the multiplicative~ergodicity.~From the boundedness of the random variables $\eta_k$ it follows that system~\eqref{E:0.1} has a    compact invariant absorbing set $X$. This  allows to reduce our study to a compact phase space   $X$.~The function~$h_V$ and the measure~$\mu_V$  are  the eigenvectors of the Feynman--Kac semigroup   corresponding to an   eigenvalue~$\lm_V$,~i.e.,  
  $$
\PPPP_1^V h_V=\lambda_V h_V, \,\,  \PPPP_1^*{\mu_V}=\lambda_V\mu_V,
$$
where $\PPPP_1^{V*}$ is the dual of $\PPPP_1^V$.~By normalising   $\PPPP_k^V$, we reduce    the 
problem~to
the study of the exponential mixing for the following auxiliary Markov semigroup: 
$$
\SSS_k^V g=  \lm_V^{-k} h_V^{-1}\PPPP^V_k(gh_V), \q\q\  g\in C(X).
$$ The latter is   achieved    by showing that, for sufficiently large time $k$,  the dual operator~$\SSS_k^{V*}$ is a contraction in the space of Borel probability measures endowed with the dual-Lipschitz norm. The proof of the contraction relies on    the following four   ingredients   (see Theorem \ref{T:2.1}): 
 (i)	{\it refined uniform Feller property}, (ii)	{\it uniform irreducibility on $\aA$},
    (iii)	{\it concentration near~$\aA$},
(iv)	  {\it exponential bound}.
  The first one  is an enhanced    version  of the uniform Feller property from~\cite{KS-cmp2000, JNPS-2012} with specified constants. Its proof relies on the coupling method.    
     In the particular case~$V=0$, it coincides with the {\it asymptotic strong Feller property} in  \cite{HM-2006,HM-2008}. Property (ii)    follows from the dissipativity of the system and is   well known. The lack of irreducibility on the set~$X\setminus \aA$ is compensated by       (iii) and (iv).   Their verification is highly non-trivial, especially when there is no restriction on the oscillation of the potential~$V$. We use   the subcontraction condition on $S$   and a bootstrap argument to derive them for~\eqref{E:0.1}.

\smallskip
 Let us note that  the proof of the existence of an eigenvector $\mu_V$ is standard, it follows by a simple application of the Leray--Schauder fixed point theorem.  On the other hand, the existence of~$h_V$    is more delicate. It is derived from the above-mentioned properties (i), (ii), and (iv).

  Finally,  let us mention that the long-time behavior of the Feynman--Kac semigroup has been studied in the literature in the case when the   Markov process is  strong Feller and the space of probability measures is endowed with the  total variation metric (e.g., see \cite{wu-2001, M-04}). Obviously,  these results cannot be applied in our setting.

\smallskip
The paper is organised as follows.~In  Section~\ref{S:1}, we formulate our main results on the  multiplicative ergodicity.~In Section \ref{S:2}, we establish  an abstract exponential convergence criterion   for generalised Markov semigroups, which is applied in   Section~\ref{S:3} to prove the main      results.~Section \ref{S:4} is devoted to the proof of the refined uniform Feller property.  In Section~\ref{S:5}, we present the above-mentioned  applications of the   MET   and discuss some examples of PDEs verifying Conditions (A)-(E).

      \subsection*{Acknowledgments} 
      
     The authors   thank  Vojkan Jak\v si\'c, Philippe Robert, and Armen Shirikyan    for useful remarks. The research of DM was partially supported by the Celtic Plus project SENDATE TANDEM (C2015/3-2). The research of VN   was supported by the ANR grant NONSTOPS ANR-17-CE40-0006-02 and CNRS PICS grant Fluctuation theorems in stochastic systems.

\subsection*{Notation}
 For a Polish space $(H,d)$, we shall use the following standard notation.

\smallskip
\noindent
$C(H)$ is the space of continuous functions $f:H\to \rr$.   For any    $ X\subset H$ and $f\in C(H)$, we denote $\|f\|_{X}=\sup_{u\in   X}|f(u)|$ and write $\|f\|_{\ty}$ instead~of~$\|f\|_{H}$.
 
\smallskip
\noindent 
$C_b(H)$  is the space of bounded  functions  $f\in C(H)$  with the norm $\|f\|_\infty$.
 
\smallskip
\noindent
$L_b(H)$   is the space of functions $f\in C_b(H)$ for which the following
norm is finite 
$$
 \|f\|_{L}=\|f\|_\infty+\sup_{u\neq v} \frac{
{|f(u)-f(v)|}}{ d(u,v)  }.
$$    $\MM_+(H)$ is the set   of non-negative finite Borel measures on $H$ endowed with the dual-Lipschitz metric  
  $$
  \|\mu_1-\mu_2\|_L^*=\sup_{\|f\|_L\le 1} |\lag f,\mu_1\rag-\lag f,\mu_2\rag|, \quad \mu_1, \mu_2\in \MM_+(H),
  $$  where  $\lag f,\mu\rag=\int_Hf(u)\mu(\!\dd u)$.
$\PP(H)$ is the subset of Borel probability measures. 

\smallskip
\noindent
  $\Osc(f)$ is the oscillation of a function $f:H\to \rr$, that is the number defined by~$\sup_{u\in H} f(u)-\inf_{u\in H} f(u)$.

\smallskip
\noindent
    $B_R(a)$   is the closed   ball in~$H$ of radius~$R$ centered at~$a$. When $H$ is  Banach   and $a=0$, we write $B_R$ instead of  $B_R(0)$.

 \section{Main  results}\label{S:1}

We consider problem \eqref{E:0.1} in a  separable Hilbert space $H$   endowed with the scalar product $(\cdot,\cdot)$  and the associated norm $\|\cdot\|$.~We   assume that     the following  hypotheses  are satisfied for the mapping~$S:H\to H$ and
the sequence~$\{\eta_k\}$; they are exactly the same as in~\cite{JNPS-2012}:

    \medskip
    \noindent
{\bf (A)} 
{\sl $S$ is locally Lipschitz in $H$,   and for any $R>r>0$, there is a positive number $a<1$ and an integer $n_0\ge1$ such that
\begin{gather}
\|S^n(u)\|\le a\|u\|\vee r\q\quad\mbox{for $u\in B_R$, $n\ge n_0$},
\label{E:1.2}
\end{gather}
where $S^n$ stands for  the $n^{\mathrm{th}}$ iteration of~$S$.}

\smallskip
For any     set $B\subset H$,  let us  define the sequence of sets
$$
\aA(0,B)=B, \quad \aA(k,B)=S(\aA(k-1,B))+\KK, \quad k\ge1, 
$$where~$\KK$ is the support of the law of~$\eta_1$.
We denote by~$\aA(B)$ the closure in~$H$ of the union of the sets~$\aA(k,B),  k\ge1$ and   call it   {\it the domain of attainability~from~$B$}.
  
\medskip
\noindent
{\bf (B)}
{\sl There is a number $\rho>0$ and a   
  continuous
 function~$k_0=k_0(R)$ such that}
\begin{equation} \label{E:1.3}
\aA(k,B_R)\subset B_\rho\quad\mbox{\sl for $R\ge0$,  $\,k\ge k_0(R)$}. 
\end{equation}

\noindent 
{\bf (C)}~{\sl There is an orthonormal basis~$\{e_j\}$ in~$H$ such that, for any $R>0$,
\begin{equation} \label{E:1.4}
\|(I-{\mathsf P}_N)(S(u_1)-S(u_2))\|\le \gamma_N(R)\|u_1-u_2\|, \q u_1, u_2\in B_R,
\end{equation}
where ${\mathsf P}_N$ is the orthogonal projection onto     $\textup{span}\{e_1,\dots,e_N\}$, and $\gamma_N(R)\downarrow 0$ as~$N\to\infty$.}

\smallskip
\noindent
{\bf (D)}~{\sl  The random variable~$\eta_k$ is of the form $\eta_k=\sum_{j=1}^\infty b_j\xi_{jk}e_j,$
where $\{e_j\}$ is the orthonormal basis from~{\rm(C)}, $b_j>0$ are numbers such that~$ \sum^\infty_{j=1} b_j^2<\infty,
$ and~$\xi_{jk}$ are independent scalar random variables
 with  law having  a density $p_j$ with respect to the Lebesgue measure.~We assume that $p_j$   is  a   continuously differentiable  function  such that $p_j (0) > 0$ and with support in $[-1,1]$.} 

\medskip

 The reader is referred to the beginning of Section \ref{S:3}   for some comments about these conditions and to Section~\ref{S:5.5} for examples.

 \smallskip     
      Let $\aA=\aA(\{0\})$  be the domain of attainability from zero.~This set is  compact  in $H$ and invariant for \eqref{E:0.1}, i.e.,   for any~$u_0\in \aA$, we have~$u_k\in \aA$~for all~$k\ge1$ almost surely. Recall that  $\PPPP_k^V:C_b(H)\to C_b(H)$ 
       is the       Feynman--Kac semigroup   associated with   $V\in L_b(H)$, that is,  
$$
  \PPPP_k^Vf(u)=\e_u\left\{ \Xi_k^Vf\right\}=\e_u\left\{ f(u_k)\exp\left(\sum_{n=1}^kV(u_n)\right)\right \}
$$
  and  $\PPPP_k^{V*}:\MM_+(H )\to \MM_+(H )$~stands for its dual.
      The following   theorem  is our first result.     
\begin{theorem} \label{T:1.1}
Under   Conditions {\rm(A)-(D)},  system \eqref{E:0.1} is multiplicatively ergodic on $\aA$ with any potential $V\in  L_b(\aA)$, i.e.,  the following two assertions~hold.
\begin{description}
\item[Existence.] There is a number $\lambda_V\!\!>\!0$, a   measure $\mu_V\!\in\!\PP(\aA)$ whose support~coin\-cides~with~$\aA$, and a   positive function $h_V\in L_b(\aA)$   such that for any~$u\in \aA$,
\begin{equation}
\PPPP_1^V h_V(u)=\lambda_V h_V(u), \quad  \PPPP_1^{V*}\mu_V=\lambda_V\mu_V. \label{E:1.5}\\
   \end{equation}\item[Exponential convergence.]There are  positive numbers  
    $\gamma_V$ and $C_V$ such~that  
 \be\label{E:1.6}
\left|\lm_V^{-k}\PPPP_k^V f(u)-\lag f, \mu_V\rag h_V(u)\right|\le C_V e^{-\gamma_V k}, \q\q k\ge 1
\ee for any    $u\in \aA$ and $f \in L_b(\aA)$ with $\|f\|_L\le 1$.
 \end{description}
 \end{theorem}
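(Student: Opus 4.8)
The plan is to reduce everything to the compact invariant set $\aA$ — on which $\PPPP_1^V$ is a positive operator on $C(\aA)$ and $\PP(\aA)$ is convex and weak-$*$ compact — and to derive the exponential convergence from the abstract criterion of Section~\ref{S:2} applied to a suitably normalised Markov semigroup, the eigenvalue, eigenmeasure and eigenfunction being produced beforehand by fixed-point arguments. First I would obtain $\lambda_V$ and $\mu_V$: the map $\nu\mapsto\PPPP_1^{V*}\nu/\langle\PPPP_1^V\mek,\nu\rangle$ is well defined on $\PP(\aA)$ (since $\PPPP_1^V\mek\ge e^{-\|V\|_{\aA}}>0$) and continuous, so by the Leray--Schauder theorem it has a fixed point $\mu_V$; putting $\lambda_V:=\langle\PPPP_1^V\mek,\mu_V\rangle\in[e^{-\|V\|_{\aA}},e^{\|V\|_{\aA}}]$ yields $\PPPP_1^{V*}\mu_V=\lambda_V\mu_V$, hence $\PPPP_k^{V*}\mu_V=\lambda_V^k\mu_V$ for all $k$. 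To see $\supp\mu_V=\aA$, fix a non-empty open $U\subset\aA$, pick $f\in C(\aA)$ with $0\le f\le\mek$ and $f>0$ on a ball contained in $U$, and use the uniform irreducibility of $(u_k)$ on $\aA$ (a known consequence of the dissipativity in (A)--(B) and the non-degeneracy in (C)--(D)) to get $\PPPP_\ell^Vf>0$ on $\aA$ for $\ell$ large; then $\mu_V(U)\ge\langle f,\mu_V\rangle=\lambda_V^{-\ell}\langle\PPPP_\ell^Vf,\mu_V\rangle>0$.

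\textbf{The eigenfunction.} Writing $\SSS_k:=\lambda_V^{-k}\PPPP_k^V$ on $C(\aA)$, one has $\langle\SSS_k\mek,\mu_V\rangle=1$ for every $k$. The refined uniform Feller property of Section~\ref{S:4} makes the family $\{\SSS_kf:k\ge0\}$ equicontinuous on $\aA$, with a modulus governed by $\|f\|_L$; together with compactness of $\aA$ this turns $\langle\SSS_k\mek,\mu_V\rangle=1$ into a uniform upper bound $\SSS_k\mek\le C'$, and, using in addition uniform irreducibility to propagate the bound $\SSS_k\mek\ge\tfrac12$ that equicontinuity furnishes on a ball of fixed radius, into a uniform lower bound $\SSS_k\mek\ge c'>0$, both valid for all $k\ge1$. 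Fixing a large $k_0$, the non-empty convex set
$$
\FF=\bigl\{g\in C(\aA):\ c'\le g\le C',\ \langle g,\mu_V\rangle=1,\ g\ \text{has modulus of continuity}\ \omega\bigr\}
$$
($\omega$ being the modulus from the uniform Feller property) is mapped into itself by $\SSS_{k_0}$ and is compact in $C(\aA)$ by Arzel\`a--Ascoli, so the Schauder theorem gives $h\in\FF$ with $\PPPP_{k_0}^Vh=\lambda_V^{k_0}h$. Then $h_V:=\sum_{j=0}^{k_0-1}\lambda_V^{-j}\PPPP_j^Vh$ is positive (its $j=0$ term equals $h\ge c'$), lies in $L_b(\aA)$ (a finite sum of bounded Lipschitz functions on the compact $\aA$), and satisfies $\PPPP_1^Vh_V=\lambda_V h_V$ by a one-line telescoping computation; normalising $h_V$ so that $\langle h_V,\mu_V\rangle=1$ finishes the \textbf{Existence} part.

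\textbf{Exponential convergence.} With $h_V>0$ Lipschitz available, set $\SSS_k^Vg:=\lambda_V^{-k}h_V^{-1}\PPPP_k^V(gh_V)$ on $C(\aA)$; the eigenrelations give $\SSS_1^V\mek=\mek$, and $h_V^{-1}\in L_b(\aA)$. On the compact invariant $\aA$ the four hypotheses of Theorem~\ref{T:2.1} are met: (i) the refined uniform Feller property of Section~\ref{S:4}; (ii) the uniform irreducibility on $\aA$ recalled above; and (iii) concentration near $\aA$ and (iv) the exponential bound, which are trivial here since every trajectory issued from $\aA$ stays in the fixed compact $\aA$ and $\SSS_k^V\mek=\mek$. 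Theorem~\ref{T:2.1} then yields a unique $\SSS^V$-stationary $\nu_V\in\PP(\aA)$ and constants $C,\gamma>0$ with $\|\SSS_k^{V*}\delta_u-\nu_V\|_L^*\le Ce^{-\gamma k}$ uniformly in $u\in\aA$. Since $\lambda_V^{-k}\PPPP_k^Vf(u)=h_V(u)\langle fh_V^{-1},\SSS_k^{V*}\delta_u\rangle$ and $\|fh_V^{-1}\|_L$ is bounded in terms of $\|h_V^{-1}\|_L$ whenever $\|f\|_L\le1$, we obtain
$$
\bigl|\lambda_V^{-k}\PPPP_k^Vf(u)-h_V(u)\langle fh_V^{-1},\nu_V\rangle\bigr|\le C_Ve^{-\gamma_Vk},\qquad k\ge1,
$$
uniformly over $u\in\aA$ and $\|f\|_L\le1$. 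Finally $m:=h_V^{-1}\nu_V$ satisfies $\PPPP_1^{V*}m=\lambda_V m$, so by uniqueness of $\nu_V$ it is proportional to $\mu_V$, and the normalisation $\langle h_V,\mu_V\rangle=1$ forces $m=\mu_V$, i.e.\ $\langle fh_V^{-1},\nu_V\rangle=\langle f,\mu_V\rangle$ — which is precisely~\eqref{E:1.6}.

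\textbf{The main difficulty.} The one genuinely hard input is the refined uniform Feller property with quantified constants, used both in the eigenfunction construction and as hypothesis~(i) of Theorem~\ref{T:2.1}; it is established by a coupling argument in Section~\ref{S:4}. Everything else is soft: in particular, working on the compact invariant $\aA$ is exactly what makes the concentration and exponential-bound requirements of the abstract criterion vacuous.
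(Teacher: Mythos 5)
Your overall architecture matches the paper's: construct $\mu_V$ by Leray--Schauder, obtain an eigenfunction $h_V$, and then deduce exponential convergence from the abstract Theorem~\ref{T:2.1} applied to the normalised Markov semigroup $\SSS^V_k=\lambda_V^{-k}h_V^{-1}\PPPP^V_k(\,\cdot\,h_V)$. The paper does essentially the same thing, except that it applies the first assertion of Theorem~\ref{T:2.1} directly to the kernel $P_1^V$ on $\aA$ (which normalises internally) rather than normalising by hand. Your Leray--Schauder step, the identification $\nu_V=h_V\mu_V$, and the translation from the mixing of $\SSS^V$ to inequality~\eqref{E:1.6} are all correct.

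The gap is in your construction of $h_V$. Proposition~\ref{P:4.1} gives
$|\PPPP^V_kf(v)-\PPPP^V_kf(v')|\le(C\|f\|_\infty+c^k\|f\|_L)\,\|\PPPP^V_k\mek\|_{\aaa}\,\|v-v'\|$,
so after dividing by $\lambda_V^k$ the Lipschitz constant of $\SSS_kf$ is controlled by $\|\SSS_k\mek\|_{\aaa}$. When you assert that equicontinuity of $\{\SSS_k\mek\}$ combined with $\langle\SSS_k\mek,\mu_V\rangle=1$ yields a uniform bound $\SSS_k\mek\le C'$, the reasoning is circular: without an a-priori bound on $\|\SSS_k\mek\|_{\aaa}$ there is no uniform modulus of equicontinuity, and without that modulus there is no uniform pointwise bound. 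The same unproved bound then silently reappears as hypothesis~(iv) when you apply Theorem~\ref{T:2.1} to $\SSS^V$. In the paper this bound is exactly the content of Remark~\ref{R:2.2}: it is obtained by quoting the convergence result of Theorem~2.1 of~\cite{JNPS-2012} on $\aA$, which is a non-trivial ingredient and not a consequence of equicontinuity alone. A self-contained alternative is to renormalise by the sup-norm — work with $g_k=\|\PPPP^V_k\mek\|_{\aaa}^{-1}\PPPP^V_k\mek$, for which Proposition~\ref{P:4.1} gives a genuinely uniform Lipschitz bound, extract a subsequential limit by Arzel\`a--Ascoli, and identify $\lambda_V$ with the limit of the ratios $\|\PPPP^V_{k+1}\mek\|_{\aaa}/\|\PPPP^V_k\mek\|_{\aaa}$. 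Separately, your Schauder set $\FF=\{c'\le g\le C',\ \langle g,\mu_V\rangle=1,\ \dots\}$ is not invariant under $\SSS_{k_0}$: from $c'\le g\le C'$ and $c'\le\SSS_{k_0}\mek\le C'$ one only obtains $(c')^2\le\SSS_{k_0}g\le(C')^2$, which escapes $\FF$ unless $c'=C'=1$. The paper's Ces\`aro-average construction (Step~1 of Section~\ref{S:2}.1) avoids both problems once~(iv) is in hand.
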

 The  second result establishes multiplicative ergodicity     on the   whole~spa\-ce~$H$, under the restriction that  the oscillation of the potential is~small.  
\begin{theorem} \label{T:1.2}
Under   Conditions {\rm(A)-(D)}, there is a number $\delta>0$ such that system \eqref{E:0.1} is multiplicatively ergodic on   $H$ with any potential $V\in  L_b(H)$ satisfying  $\Osc(V)\le\delta$. Namely,  we have the following.
\begin{description}
\item[Existence.] There is a number $\lambda_V>0$, a  measure $\mu_V\in\PP(H)$ whose support coincides with~$\aA$, and  a  positive function $h_V\in C(H)$  such that \eqref{E:1.5}
  holds  for any $u\in H$.  

\item[Exponential convergence.] 
There is    
    $\gamma_V>0$ such that for any $R>0$, we have inequality \eqref{E:1.6}   
  for any $u\in B_R$,  $f \in L_b(H)$ with $\|f\|_L\le 1$,  and some number~$C_{V}(R)>0$.
 \end{description}
 \end{theorem}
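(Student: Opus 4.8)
The plan is to reduce the problem to a compact phase space and then apply the abstract exponential-convergence criterion of Theorem~\ref{T:2.1}. Since the kicks $\eta_k$ are bounded, Conditions~(A)--(B) furnish a compact set $X$ (e.g.\ $X=\aA(B_\rho)$) that is invariant for~\eqref{E:0.1} and absorbs every ball: if $u_0\in B_R$, then $u_k\in B_\rho\subset X$ almost surely for all $k\ge k_0(R)$. It therefore suffices to prove the eigenrelations~\eqref{E:1.5} and the convergence~\eqref{E:1.6} with $X$ in place of $H$; the extension to an arbitrary $u\in B_R$ then follows by applying the Markov property at the deterministic time $k_0(R)$ and bounding the Feynman--Kac weight accrued over the first $k_0(R)$ steps between $e^{-k_0(R)\|V\|_\infty}$ and $e^{k_0(R)\|V\|_\infty}$, which produces the constant $C_V(R)$.

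First I would construct $\mu_V$ and $\lambda_V$: the normalised dual map $\mu\mapsto\PPPP_1^{V*}\mu/\langle\mek,\PPPP_1^{V*}\mu\rangle$ is continuous on the compact convex set $\PP(X)$ (here $\langle\mek,\PPPP_1^{V*}\mu\rangle\ge e^{-\|V\|_\infty}>0$), so the Leray--Schauder theorem gives a fixed point $\mu_V\in\PP(X)$, and we set $\lambda_V=\langle\mek,\PPPP_1^{V*}\mu_V\rangle>0$. That $\supp\mu_V=\aA$ is forced by the uniform irreducibility on $\aA$, which makes $\mu_V$ charge every neighbourhood of every point of $\aA$, together with the minimality of $\aA$ among invariant sets, which drives the mass of $\PPPP_k^{V*}\mu_V$ toward $\aA$.

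Next I would verify, for the Feynman--Kac data $(\PPPP_k^V,\lambda_V)$ on $X$, the four ingredients entering Theorem~\ref{T:2.1}. Ingredient~(i), the refined uniform Feller property, holds under~(A)--(D) alone --- it is the content of Section~\ref{S:4} --- and ingredient~(ii), uniform irreducibility on $\aA$, is classical and comes from the dissipativity; neither uses the hypothesis $\Osc(V)\le\delta$. That hypothesis is needed only for ingredient~(iv), the exponential bound $\sup_{u\in X}\lambda_V^{-k}\PPPP_k^V\mek(u)\le C$, and ingredient~(iii), concentration near $\aA$. Both rest on the same mechanism: by~(A)--(B), the probability that a trajectory spends a long time outside a fixed neighbourhood of $\aA$ decays exponentially at a rate $\kappa>0$ that is independent of $V$, whereas such an excursion inflates the Feynman--Kac weight, relative to $\lambda_V^k$, by a factor at most $e^{k\,\Osc(V)}$; taking $\delta<\kappa$ makes the decay prevail. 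From~(i),~(ii),~(iv) one then extracts a positive continuous eigenfunction $h_V$ of $\lambda_V^{-1}\PPPP_1^V$ (compactness of the family $\{\lambda_V^{-k}\PPPP_k^V\mek\}$ in $C(X)$ comes from~(i) and~(iv), positivity from~(ii)); finally, Theorem~\ref{T:2.1}, fed with all four ingredients, gives the exponential contraction of the normalised dual semigroup toward $\mu_V$ in the dual-Lipschitz metric, which is~\eqref{E:1.6} on $X$, hence on each $B_R$ by the first paragraph.

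The main obstacle is the pair of estimates~(iii)--(iv): one has to make the competition between the dissipative return rate $\kappa$ and the weight-distortion exponent $\Osc(V)$ quantitative and extract from it an explicit threshold $\delta$, uniformly over the admissible potentials. In Theorem~\ref{T:1.1} this difficulty does not arise, since $\aA$ is invariant and a trajectory started in $\aA$ never leaves it, so~(iii)--(iv) are trivial there; the whole new content of Theorem~\ref{T:1.2} is to control these two properties off $\aA$.
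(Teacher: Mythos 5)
Your overall architecture matches the paper's: reduce to the compact invariant absorbing set $X=\aA(B_\rho)$, build $(\lambda_V,\mu_V)$ by Leray--Schauder, verify hypotheses (i)--(iv) of Theorem~\ref{T:2.1}, and pass from $X$ to $B_R$ by the absorbing property. Your treatment of (iii) is also the paper's: after replacing $V$ by $V-\log\lambda_V$ (which preserves the oscillation and forces $\inf_\aA V\le 0$, hence $\|V\|_\infty\le\Osc(V)$), one bounds $\Xi_k^V\mek\le e^{k\Osc(V)}$ and uses the exponential mixing~\eqref{E:0.2} to get $\pP_u\{u_k\notin\aA_r\}\le Ce^{-\alpha k}$, so $\delta\le\alpha/2$ suffices.

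The gap is in (iv). Your mechanism --- ``excursions away from $\aA$ are exponentially rare, and each inflates the weight by at most $e^{k\Osc(V)}$'' --- does not close the argument, because the trajectory never actually lands on $\aA$, only in its neighbourhoods, and on $\aA_r\setminus\aA$ the normalised potential can still be positive (up to $\delta$). Knowing the trajectory sits in $\aA_r$ for most of its lifetime therefore only yields $\PPPP_k^V\mek(u)\le e^{\delta k}$, which is unbounded; there is no event of small probability to trade against. The paper's Lemma~\ref{L:3.3} needs two further ideas. First, the first hitting time $\tau_\es$ of a small ball $B_\es$ around the point $0\in\aA$ has a uniform exponential moment, $\sup_{u\in X}\E_u e^{\De\tau_\es}\le 2$, and $\Osc(V)\le\De$ ensures the weight accrued before $\tau_\es$ is dominated by $e^{\De\tau_\es}$; the strong Markov property then gives $\|\PPPP_k^V\mek\|_X\le 2+2\|\PPPP_k^V\mek\|_{B_\es\cap X}$. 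Second --- and this is the step your sketch omits entirely --- one must transfer the bound from $B_\es$ to the single point $0\in\aA$: the \emph{refined} uniform Feller property~\eqref{E:4.1}, with its explicit constant multiplying $\|\PPPP_k^V\mek\|_X$, gives $|\PPPP_k^V\mek(u)-\PPPP_k^V\mek(0)|\le\tfrac14\|\PPPP_k^V\mek\|_X$ for $u\in B_\es\cap X$ and $\es$ small, so the self-referential term can be absorbed and one lands on $\|\PPPP_k^V\mek\|_X\le 4+4\|\PPPP_k^V\mek\|_\aA$, the right-hand side being bounded by~\eqref{E:2.7}. Without this hitting-time plus quantitative-Feller bootstrap, (iv) is unproved, and with it the existence of $h_V$ and the exponential convergence both collapse.
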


Our third result  shows that
   the restriction on the smallness of~$\Osc(V)$ in this  theorem     can be removed  if we  assume additionally that $S$ is a subcontraction with respect to some metric.   
   
 \medskip
{\bf (E)}~{\sl There is a  translation invariant\,\footnote{i.e., $d'(u+w,v+w)=d'(u,v)$ for any $u,v,w\in H$.}~metric~$d'$ on $H$ whose topology is   weaker  
   than the natural one on $H$   such that    
\be\label{E:1.7}
d' (S(u), S(v))\le  d'(u, v) \q\q \text{ for   } u, v\in  \aA(B_{\rho+1}), 
\ee where $\rho$ is the number in   {\rm(B)}.}

 \begin{theorem}\label{T:1.3}
Under Conditions  {\rm(A)-(E)}, system \eqref{E:0.1} is multiplicatively ergodic on   $H$ with any potential $V\in  L_b(H)$.
\end{theorem}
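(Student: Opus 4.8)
The plan is to derive Theorem~\ref{T:1.3} by applying the abstract criterion of Theorem~\ref{T:2.1} to the Feynman--Kac semigroup $\{\PPPP_k^V\}$ on the compact invariant absorbing set~$X$ (whose existence follows from the boundedness of the~$\eta_k$). The input of that criterion is the four ingredients listed in the introduction: (i)~the refined uniform Feller property, (ii)~the uniform irreducibility on~$\aA$, (iii)~the concentration near~$\aA$, and (iv)~the exponential bound. As recalled there, (i) is proved for every $V\in L_b(H)$ in Section~\ref{S:4}, (ii) follows from the dissipativity exactly as in Theorem~\ref{T:1.1}, the eigenmeasure $\mu_V$ is produced by the Leray--Schauder theorem with support equal to~$\aA$ by~(ii), the eigenfunction $h_V$ on~$X$ is produced from (i), (ii), (iv), and Theorem~\ref{T:2.1} then yields the exponential convergence~\eqref{E:1.6} for $u\in X$. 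Neither (i) nor (ii) requires any bound on $\Osc(V)$, so the whole new difficulty is to establish (iii) and (iv) for a \emph{general} potential; this is the only place Condition~(E) enters.

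Once~\eqref{E:1.6} is known on~$X$, the extension to an arbitrary initial point is routine. Fix $R>0$ and put $m=k_0(R)$, so that $u_m\in B_\rho\subset X$ almost surely whenever $u\in B_R$. By the Markov property and the multiplicativity of the Feynman--Kac weight,
\[
\lm_V^{-k}\PPPP_k^Vf(u)=\lm_V^{-m}\,\E_u\!\left[\exp\Big(\sum_{n=1}^m V(u_n)\Big)\,\lm_V^{-(k-m)}\PPPP_{k-m}^Vf(u_m)\right],
\]
so~\eqref{E:1.6} on $B_R$, with a constant $C_V(R)$ and the eigenfunction defined on~$H$ by
\[
h_V(u):=\lm_V^{-m}\,\E_u\!\left[\exp\Big(\sum_{n=1}^m V(u_n)\Big)\,h_V(u_m)\right],
\]
follows from~\eqref{E:1.6} on~$X$ by dominated convergence (the law of the trajectory depends continuously on~$u$ since~$S$ is locally Lipschitz). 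The same formula shows that $h_V$ is continuous and strictly positive on~$H$ and satisfies $\PPPP_1^Vh_V=\lm_V h_V$ there, while $\mu_V\in\PP(\aA)\subset\PP(H)$ already has the required properties.

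The heart of the matter is the verification, for a general potential, of the exponential bound $\PPPP_k^V\mek(u)\le C\lm_V^k$ and of the concentration $\PPPP_k^V\ch_{\{d(\cdot,\aA)\ge\es\}}(u)\le C_\es\,e^{-\al_\es k}\lm_V^k$, uniformly in $u\in X$. I would couple, driven by the \emph{same} noise, the trajectory $(u_k)$ issued from $u\in X$ with a trajectory $(v_k)$ issued from a point of~$\aA$ (so $v_k\in\aA$ for all $k$ by invariance); as long as both stay in $\aA(B_{\rho+1})$, translation invariance of~$d'$ and~\eqref{E:1.7} make $k\mapsto d'(u_k,v_k)$ non-increasing. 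The role of Condition~(E) is that this non-expansion, combined with the smoothing~(C) and with a coupling that synchronises the finite-dimensional parts ${\mathsf P}_N u_k$ and ${\mathsf P}_N v_k$ (the device used for the refined uniform Feller property), can be bootstrapped into a genuine decay of $\E\,d(u_k,v_k)$: here one uses that~$X$ is $d$-compact while the $d'$-topology is coarser, so that the identity map $(X,d')\to(X,d)$ is uniformly continuous and a small $d'$-distance forces a small $d$-distance. Inserting this decay into $|V(u_n)-V(v_n)|\le\Lip(V)\,d(u_n,v_n)$ and comparing the Feynman--Kac weights of the two trajectories — the weight along $(v_k)\subset\aA$ being controlled by Theorem~\ref{T:1.1} — yields the exponential bound, and localising the comparison to the event $\{d(u_k,\aA)\ge\es\}$ yields the concentration. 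With (i)--(iv) available on~$X$, Theorem~\ref{T:2.1} applies, and together with the reduction of the previous paragraph this proves Theorem~\ref{T:1.3}.

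The main obstacle is precisely this bootstrap. Condition~(E) supplies only a \emph{non-strict} contraction, and in a metric whose topology is too weak to control the Lipschitz potential directly; one must show that the non-degeneracy of the noise and the smoothing~(C) turn it into an effective contraction in the natural metric, with quantitative constants that beat $\lm_V^k$ \emph{uniformly} in time. This is the crux, because for large $\Osc(V)$ the naive bound $\PPPP_k^V\mek\le e^{k\|V\|_\infty}$ is far too lossy to dominate $\lm_V^k$ (which may be as small as $e^{k\inf V}$), and only the contraction forced by~(E) makes the excursions of the trajectory away from~$\aA$ negligible on the Feynman--Kac scale — which is what removes the smallness restriction on $\Osc(V)$ present in Theorem~\ref{T:1.2}.
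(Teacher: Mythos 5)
Your framework is right: verify (i)--(iv) on $X=\aA(B_\rho)$, invoke Theorem~\ref{T:2.1}, then extend to $u\in H$ via the Markov property and the absorbing set. But the substance of the proof is the verification of (iii) and (iv) without a smallness restriction on $\Osc(V)$, and there you only sketch a plan whose central step you yourself label ``the crux'' and do not carry out. That step has a real gap. You want to couple a trajectory from $u\in X$ with one from $\aA$, use the $d'$-non-expansion of~(E) and the smoothing~(C) to force decay of $\E\,d(u_k,v_k)$, and then compare Feynman--Kac weights via $|V(u_n)-V(v_n)|\le\Lip(V)\,d(u_n,v_n)$. Two things break. First, the two couplings you invoke are mutually exclusive: $d'(u_k,v_k)$ is non-increasing under the \emph{same-noise} coupling, whereas the coupling of Proposition~\ref{P:4.2} that synchronises $\mathsf{P}_N u_k$ and $\mathsf{P}_N v_k$ uses \emph{different} noises, so \eqref{E:1.7} no longer applies along it; there is no single coupling giving you both the $d'$-monotonicity and the $\mathsf{Q}_N$-squeezing simultaneously. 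Second, to compare the Feynman--Kac weights you need control of $\E\exp\bigl(\Lip(V)\sum_n d(u_n,v_n)\bigr)$, i.e.\ summable pathwise decay with exponential moments; the only place the paper gets that (estimate~\eqref{E:4.5} in Step~2 of Proposition~\ref{P:4.1}) depends on $\mathsf{P}_N$-synchronisation \emph{and} on the initial points being $O(1)$-close, neither of which you have for $u\in X$ and $v\in\aA$ far apart with $\Osc(V)$ large.

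The paper's route to (iii) and (iv) is quite different and avoids the comparison of weighted trajectories entirely. For (iv), Lemma~\ref{L:3.5} runs a bootstrap in the radius $R$: if $\Lambda_R=\sup_k\|\PPPP_k^V\mek\|_{\aA(B_R)}<\infty$ then inequality~\eqref{UFE} propagates this to $\Lambda_{R+\es}<\infty$ once $d_H(\aA(B_{R+\es}),\aA(B_R))<1/(2C_R)$, and a mirror argument at $R_*$ closes the supremum. Condition~(E) enters only through the purely geometric Lemma~\ref{L:3.6}, $\aA(B_R)=\bigcap_{\es>0}\aA(B_{R+\es})$, whose proof does use same-noise comparison of trajectories and $d'$-non-expansion, but deterministically and over a \emph{finite} horizon --- no Feynman--Kac weights are involved, so nothing has to beat $\lm_V^k$. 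Condition~(iii) is then Lemma~\ref{L:3.4}: replace $V$ by a Lipschitz $V_\es$ vanishing on $\aA$ and agreeing with $V$ outside $\aA_\es$, note $\Xi_k^V\mek\ch_{\{u_k\notin\aA_r\}}=\Xi_k^{V_\es}\mek\ch_{\{u_k\notin\aA_r\}}$, and apply Cauchy--Schwarz: one factor is controlled by the already-proved~(iv) for $2V_\es$ (for which $\lm_{2V_\es}=1$), the other by the mixing estimate~\eqref{E:0.2}. You should study this bootstrap-in-$R$ mechanism and the role of Lemma~\ref{L:3.6}; without it, the step you flag as the crux remains open.
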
 These three
theorems are the main results of this paper.~Their proof is based on   an abstract exponential convergence result    presented in the next section.

 \section{Exponential convergence for generalised Markov semigroups}\label{S:2}

In Theorem 2.1 in \cite{JNPS-2012}, a convergence result  is established for generalised Markov semigroups in a  compact metric space $(X,d)$. In this section, we extend that result by proving exponential convergence, under some natural additional hypotheses.   Moreover, we   relax the  irreducibility condition which now holds only on some~subset.
 
\smallskip
Recall   that  a family  $\{P(u,\cdot),u\in X\}\subset\MM_+(X)$ is a {\it generalised Markov kernel\/}  if the function   $u\mapsto P(u,\cdot)$  from~$X$ to~$\MM_+(X)$ is continuous and non-vanishing.   To any such kernel we associate  semigroups 
$\PPPP_k: C(X)\to C(X)$ and~$\PPPP_k^*: \MM_+(X)\to \MM_+(X)$ by
$$
\PPPP_kf(u)=\int_XP_k(u,\!\dd v)f(v), \quad 
\PPPP_k^*\mu(\Gamma)=\int_XP_k(u,\Gamma)\mu(\!\dd u), 
$$
where $P_k(u,\cdot)$ are the iterations of $P(u,\cdot)$.   
   \begin{theorem} \label{T:2.1}
Let $\{P(u,\cdot),u\in X\}$ be a generalised Markov kernel and $\aA$ a closed set in $X$ such that $P(u,X\setminus \aA)=0$ for   $u\in \aA$.~Assume that the   following  conditions are satisfied.
\begin{description}
\item[(i) Refined uniform Feller property.]
For any   
   $c\in(0,1)$, there is~$C>0$  such that  for any $f\in L_b(X)$ and~$v, v'\in X$, we have
  \begin{equation}
|\PPPP_k f(v)-\PPPP_k f(v')|  \le \left( C\, \|f\|_X +c  \, \|f\|_L\right) \|\PPPP_k{\mek}\|_X \,d(v,v').\label{E:2.1} 
\end{equation}
\end{description}
\begin{description}
\item[(ii) Uniform irreducibility on $\aA$.] 
For any $r>0$, there is an integer $m\ge1$ and a number $p>0$ such that
$$
P_m(u,B_r(\hat u))\ge p\quad\mbox{for all $u\in X$ and $\hat u\in \aA$}. 
$$ 
\end{description}
Then there are positive numbers   $\lambda, \gamma, C$, a measure $\mu\in\PP(X)$ whose support coincides with~$\aA$, and a positive function $h\in L_b(\aA)$  such that  for any $u\in \aA$ and $\nu\in\ppp(\aA)$, we have
\begin{gather}
\PPPP_1 h(u)=\lambda h(u), \quad \PPPP_1^*\mu=\lambda\mu,\label{E:2.2}\\
\|\lambda^{-k}\PPPP_k^*\nu -\lag h,\nu\rag\mu\|_L^*\le C e^{-\gamma k}. \label{E:2.3}
\end{gather}Furthermore, 
 let us assume additionally the following conditions.
\begin{description}
 \item[(iii) Concentration near $\aA$.]The following limit holds
 \begin{equation}\label{E:2.4}
\lim_{k\to \iin}\|P_k(\cdot,  X\setminus\aA_r)\|_X=0	 \q\quad\text{for any $r>0$} ,
\end{equation} 
where      $\aaa_r$ is the $r$-neighborhood of $\aaa$: 
 \begin{equation}\label{E:2.5}
 \aA_r=\left\{u\in X: \inf_{v\in \aA} d(u,v)< r\right\}.
\end{equation}

\end{description}

\begin{description}
\item[(iv)   Exponential bound.] We have 
\begin{equation}\label{E:2.6}
\mM=\sup_{k\ge 1} \lambda^{-k}  \|\PPPP_k \mek\|_X  <\ty.
\end{equation}
\end{description}
Then $h$ has a positive Lipschitz-continuous extension to the   space $X$ (again denoted by $h$) such that \eqref{E:2.2} and \eqref{E:2.3} hold for any $u\in X$ and $\nu \in \PP(X)$. \end{theorem}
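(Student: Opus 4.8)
My plan is to first produce the extension of $h$ to $X$ — which uses only the refined Feller property \eqref{E:2.1}, the uniform irreducibility (ii), and the exponential bound \eqref{E:2.6} — and only afterwards to upgrade convergence to exponential rate via a contraction argument for a normalized semigroup, the concentration hypothesis \eqref{E:2.4} entering solely at this second stage. Throughout I set $g_k:=\lambda^{-k}\PPPP_k\mek\in C(X)$. By \eqref{E:2.6} the $g_k$ are uniformly bounded on $X$, and applying \eqref{E:2.1} to $f=\mek$ shows they are equi-Lipschitz; since $X$ is compact, $\{g_k\}$ is relatively compact in $C(X)$.

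For the extension I would apply Schauder's theorem to the bounded linear operator $T:=\lambda^{-1}\PPPP_1$ on $C(X)$. Since $T^k\mek=g_k$, $T$ maps the closed convex hull $Q$ of the orbit $\{g_k:k\ge0\}$ into itself, and by Mazur's theorem together with the relative compactness just noted $Q$ is a compact convex subset of $C(X)$; hence $T$ has a fixed point $h\in Q$, i.e.\ $\PPPP_1h=\lambda h$. Every element of $Q$ is nonnegative, equi-Lipschitz, and of $\mu$-mass $1$ — each $g_k$ is, because $\langle g_k,\mu\rangle=\langle\mek,\lambda^{-k}\PPPP_k^*\mu\rangle=1$, and these properties survive convex combinations and uniform limits — so $h$ shares them. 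Since $\supp\mu=\aA$, $h$ cannot vanish identically on $\aA$, so $h(\hat u_0)>0$ for some $\hat u_0\in\aA$; iterating $\PPPP_1h=\lambda h$ and invoking (ii) around $\hat u_0$ then gives $h(u)=\lambda^{-m}\PPPP_mh(u)\ge\lambda^{-m}\,p\inf_{B_r(\hat u_0)}h>0$ uniformly in $u\in X$. Restricted to $\aA$, $h$ is an eigenfunction of $\PPPP_1$ of unit $\mu$-mass, hence coincides with the $h$ of the first part (unique by \eqref{E:2.3}); so this is a genuine positive Lipschitz extension, and it satisfies \eqref{E:2.2} on all of $X$.

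Next I would reduce the exponential estimate to a contraction. With $h>0$ Lipschitz on $X$, form the honest Markov semigroup $\SSS_kg:=\lambda^{-k}h^{-1}\PPPP_k(gh)$, whose dual kernel is $\SSS_k^*\delta_u(\dd v)=\lambda^{-k}h(u)^{-1}h(v)\,P_k(u,\dd v)$. A short computation shows $\tilde\mu:=h\mu\in\PP(X)$ is $\SSS_k$-invariant, and that, writing $\phi=fh$, the bound \eqref{E:2.3} on $X$ is equivalent — up to factors $\|h\|_L$ and $\inf h$ — to $\|\SSS_k^*\nu-\tilde\mu\|_L^*\le Ce^{-\gamma k}$ uniformly over $\nu\in\PP(X)$. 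Since \eqref{E:2.1} transported through $h$ makes $\SSS_k^*$ Lipschitz on $(\PP(X),\|\cdot\|_L^*)$ with a constant independent of $k$, it suffices to prove that $\SSS_N^*$ contracts this metric by some $q<1$ for one fixed $N$, then iterate over blocks of length $N$. The contraction $\|\SSS_N^*\delta_u-\SSS_N^*\delta_{u'}\|_L^*\le q$ I would obtain by the coupling method, precisely through the four ingredients advertised in the introduction: \eqref{E:2.1} transported through $h$ shows $\SSS_k$ sends the unit ball of $L_b(X)$ into functions whose Lipschitz seminorm is a small constant plus a uniformly controlled term; \eqref{E:2.4}, in the form adapted to $\SSS$, drives two coupled trajectories into a fixed neighborhood of $\aA$ after boundedly many steps with probability bounded away from $0$ uniformly in the starting points; and there (ii) supplies a common coupling component.

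The hardest step, I expect, is exactly the adapted form of \eqref{E:2.4}: the hypothesis controls the \emph{un-normalized} quantity $\|P_k(\cdot,X\setminus\aA_r)\|_X$, whereas the coupling needs a bound on $\SSS_k^*\delta_u(X\setminus\aA_r)=\lambda^{-k}h(u)^{-1}\int_{X\setminus\aA_r}h\,\dd P_k(u,\cdot)$, i.e.\ on the fraction of the Feynman--Kac mass staying far from $\aA$. Bridging the two forces one to marry \eqref{E:2.4} with the exponential bound \eqref{E:2.6}: split the horizon as $n_1+n_2$, extract the small factor $\|P_{n_2}(\cdot,X\setminus\aA_r)\|_X$ from the last $n_2$ steps, control the first $n_1$ steps by $\|\PPPP_{n_1}\mek\|_X\le\mM\lambda^{n_1}$, and bootstrap to absorb the residual power of $\lambda$ — the delicate point being to make the resulting estimate genuinely small, not merely bounded, and uniform in $u$. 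A standing concern to monitor everywhere is that all constants (the Feller constant in \eqref{E:2.1}, the irreducibility probability $p$, the concentration rate) stay uniform in the base point, so that the final $q$ and $\gamma$ do not depend on it.
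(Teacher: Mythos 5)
Your Schauder fixed-point construction of $h$ is a valid alternative to the paper's route: the paper instead forms the Ces\`aro averages $h_k=\frac1k\sum_{n=1}^{k}\PPPP_n\mek$, extracts an Arzel\`a--Ascoli limit, and passes to the limit in the telescoped identity $\PPPP_1 h_{k_j}=h_{k_j}+\frac1{k_j}(\PPPP_{k_j+1}\mek-\PPPP_1\mek)$. Both yield a nonnegative equi-Lipschitz eigenfunction with $\lag h,\mu\rag=1$, and your positivity argument via (ii) is exactly the one in the paper. So the first half of your plan is sound, though it quietly re-proves the part of the theorem already covered by~\cite{JNPS-2012}.

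The second half is where there is a genuine gap, and in fact you put your finger on it yourself. The paper does \emph{not} try to couple the normalized chain $\SSS_k$ directly; it first establishes the \emph{non-quantitative} convergence $\sup_{\nu\in\PP(X)}\|\PPPP_k^*\nu-\lag h,\nu\rag\mu\|_L^*\to0$ (Proposition~\ref{P:2.3}), using (iii) and (iv) through the decomposition
$$
\PPPP_k f=\PPPP_l\bigl(\ch_{\aA_r}\PPPP_m f\bigr)+\PPPP_l\bigl(\ch_{X\setminus\aA_r}\PPPP_m f\bigr),
$$
together with the known convergence~\eqref{E:2.7} on $\aA$. Here (iii) is applied to the \emph{un-normalized} kernel $P_l$, so the renormalization problem you call ``the hardest step'' simply never arises. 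Only afterwards, in Lemma~\ref{L:2.6}, is the exponential rate extracted: the contraction is proved not in the dual-Lipschitz metric (which need not contract, because of the constant $C$ in~\eqref{E:2.1}) but in the rescaled Kantorovich metric associated with $d_\theta=1\wedge(\theta\,d)$, with $\theta\ge4C$ large enough to dominate $C$; the refined Feller property~\eqref{E:2.1} controls the Lipschitz part of $\SSS_m f$, while the already-established weak convergence kills its oscillation for $m$ large. You have neither of these devices, and the ``split horizon / bootstrap'' you sketch for the $\SSS$-adapted form of (iii) is circular — it is essentially what the weak convergence of Proposition~\ref{P:2.3} provides, but you would be trying to use it \emph{inside} the contraction argument whose conclusion you need to deduce it. Finally, note that (ii) only gives $P_m(u,B_r(\hat u))\ge p$; it does not supply a common Doeblin-type minorizing measure, so ``(ii) supplies a common coupling component'' is not justified — coupling appears in this paper only in Section~\ref{S:4}, to verify~\eqref{E:2.1} in the concrete PDE setting, not in the abstract Theorem~\ref{T:2.1}.
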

\begin{remark}\label{R:2.2}
Let us underline that in condition (ii), the initial point $u$ belongs to~$X$ and the final one~$\hat u$ to~$\aA$, so the semigroup is not   irreducible in~$X$.~Also note that~(iii) and (iv) are necessary conditions for convergence~\eqref{E:2.3}. Indeed,~\eqref{E:2.3} is equivalent~to 
$$
\sup_{\|f\|_L\le1 }\|\lambda^{-k}\PPPP_k f -\lag f,\mu\rag h\|_X\le C e^{-\gamma k}.
$$By taking any non-negative function $f\in L_b(X)$  that vanishes on   $\aA$ and  equals~$1$ outside   $ \aA_r$, we get (iii), and taking $f=\mek$, we get (iv).
 
 Furthermore,  in the case $X=\aA$, (iii) is trivially satisfied and (iv)    follows from (i) and (ii). Indeed, if   conditions  (i) and (ii) hold,   we can apply  Theorem~2.1 in~\cite{JNPS-2012} on the set $\aA$:
\begin{equation}\label{E:2.7}
 \|\lambda^{-k}\PPPP_k f  - \lag f,\mu\rag h\|_\aA\to 0 \quad \text{as $k\to \ty$ for   $f\in L_b(\aA)$}.
\end{equation}
 Choosing  $f=\mek$, we get (iv).
 \end{remark}
 
 \medskip
We conclude from this remark that it   suffices to prove only the second assertion of Theorem \ref{T:2.1}.~Its proof is divided into three~parts.

 \subsection{Existence of eigenvectors $\mu$ and $h$}
The existence of an eigenvector~$\mu\in \PP(\aA)$ is shown in~\cite{JNPS-2012}, by applying the Leray--Schauder theorem  to  the continuous mapping
 $$
 F:\PP(\aA)\to\PP(\aA), \quad F(\mu)=(\PPPP_1^*\mu(\aA))^{-1}\PPPP^*_1\mu.
 $$ Any  fixed point~$\mu$ of $F$ is an eigenvector for $\PPPP_1^*$ corresponding to an eigenvalue~$\lambda=\PPPP^*\mu(\aA)>0$. The irreducibility on $\aA$ implies that  $\supp\mu=\aA.$~Note that replacing $P(u,\Gamma)$ by $\lambda^{-1}P(u,\Gamma)$, we may assume that $\lm=1$.   From now on, we shall always assume    that $\lm=1$, without further stipulation.

 \smallskip
 
 Let us show the existence of a  Lipschitz-continuous    function~$h:X\to\rr_+$ satisfying~\eqref{E:2.2} for any $u\in X$. We    use      some arguments from~\cite{JNPS-2012}.  
 
  \smallskip
  {\it Step~1: Existence}.~Using   \ef{E:2.1}   and   \ef{E:2.6},  we see that   the sequence~$\{\PPPP_k\mek\}$ is uniformly equicontinuous on~$X$. It follows that so is the sequence 
\begin{equation}\label{E:2.8}
h_k=\frac1k\sum_{n=1}^{k}\PPPP_n\mek. 
\end{equation}
Applying the Arzel\`a--Ascoli theorem, we construct a function~$h: X \to\rr_+$ and a sequence $k_j\to\infty$ such that 
\begin{equation}  \label{E:2.9}
\|h_{k_j}-h\|_X\to0\quad\mbox{as $j\to\infty$}. 
\end{equation}
  Passing to the limit as $j\to \ty$ in the equality  
$$
\PPPP_1 h_{k_j}(u)  
=h_{k_j}(u)+\frac{1}{k_j}\bigl(\PPPP_{k_j+1}\mek(u)- \PPPP_1 \mek(u)\bigr), \quad u\in X, 
$$  
     and using      \eqref{E:2.6},    we get~\eqref{E:2.2}.
     The Lipschitz-continuity of
   $h$   follows from~\eqref{E:2.1}, \eqref{E:2.6}, \eqref{E:2.8},  and \eqref{E:2.9}.

\smallskip
{\it Step~2: Positivity}.  
Now we   show that  $h(u)>0$ for any $u\in X$.
Using equalities $\PPPP^*_k\mu=  \mu$ and \eqref{E:2.8}  together with    limit \eqref{E:2.9},     we obtain   
$$
1=\lag  h_{k_j},\mu\rag\to \lag h,\mu\rag\quad\text{as $j\to\ty$}.
$$ So $\lag h,\mu\rag=1$ and $h(\hat u)>0$ for some $\hat u\in \aA$. 
By the continuity of $h$, there~is~$r>0$ such that~$h(v)\ge r$ for any~$v\in B_r(\hat u)$. Thanks to~(ii), for sufficiently large~$m\ge1$,   
$$
h(u)=\PPPP_mh(u)
\ge r\,   P_m\bigl(u,B_{r}(\hat u)\bigr)>0 \quad\text{ for any $u\in X$}.
$$ 
   This completes the proof of existence of eigenvectors    $\mu$ and $h$.
      The uniqueness   will follow from inequality \eqref{E:2.3}.

 \subsection{A weak version of \ef{E:2.3}}
   In this section, we show that   the left-hand side of \ef{E:2.3} converges to zero.  We shall   use this in the next section    to establish exponential convergence.
 \begin{proposition}\label{P:2.3}
Under   Conditions {\rm (i)-(iv)}, we have 
  \be\label{E:2.10}
\sup_{\nu\in \PP(X)}\|\PPPP_k^*\nu -\lag h,\nu\rag\mu\|_L^*\to 0\quad\text{ as $k\to \infty$}.
\ee  
 \end{proposition}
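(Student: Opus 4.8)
The plan is to combine the convergence on $\aA$ already available from Theorem 2.1 in \cite{JNPS-2012} (displayed as \eqref{E:2.7} with $\lambda=1$) with the concentration-near-$\aA$ property (iii) to transfer convergence to arbitrary initial measures $\nu\in\PP(X)$. First I would fix $\varepsilon>0$ and a test function $f\in L_b(X)$ with $\|f\|_L\le1$. The idea is to split any orbit into a piece that drives the mass into a small neighborhood $\aA_r$ of $\aA$, followed by a long run for which the dynamics is essentially that on $\aA$. Concretely, for $k=\ell+m$ write $\PPPP_k^*\nu=\PPPP_\ell^*(\PPPP_m^*\nu)$, choose $m=m(\varepsilon,r)$ so large (using (iii)) that $\PPPP_m^*\nu$ puts all but $\varepsilon$ of its (total, possibly non-unit) mass on $\aA_r$, uniformly in $\nu\in\PP(X)$, and control the excess mass via the exponential bound (iv), which bounds $\|\PPPP_m\mek\|_X\le\mM$ and hence $\PPPP_m^*\nu(X)\le\mM$ uniformly.

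The second step is a short-time regularization using the refined uniform Feller property (i): the restriction of $\PPPP_m^*\nu$ to $\aA_r$ is close, in $\|\cdot\|_L^*$, to a measure supported on $\aA$. Here I would use (i) together with the fact that for $u\in\aA_r$ there is $\hat u\in\aA$ with $d(u,\hat u)<r$; applying \eqref{E:2.1} with $c$ fixed and $r$ small shows $|\PPPP_\ell f(u)-\PPPP_\ell f(\hat u)|$ is small uniformly in $\ell$, modulo the factor $\|\PPPP_\ell\mek\|_X\le\mM$ from (iv). Transporting the restricted measure along $u\mapsto\hat u$ produces $\tilde\nu\in\MM_+(\aA)$ with $\|\PPPP_\ell^*(\PPPP_m^*\nu|_{\aA_r})-\PPPP_\ell^*\tilde\nu\|_L^*\le C\mM r$ for all $\ell$. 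Now $\tilde\nu$ lives on $\aA$, so \eqref{E:2.7} (applied after normalizing $\tilde\nu$, or directly to the finite measure since $\PPPP_\ell$ is linear and $\|\tilde\nu\|$ is bounded by $\mM$) gives $\|\PPPP_\ell^*\tilde\nu-\lag h,\tilde\nu\rag\mu\|_L^*\to0$ as $\ell\to\infty$, uniformly over the relevant family of $\tilde\nu$ if needed, using compactness of $\PP(\aA)$ in $\|\cdot\|_L^*$. Finally I would check that $\lag h,\tilde\nu\rag$ is close to $\lag h,\nu\rag$: this follows because $\lag h,\PPPP_m^*\nu\rag=\lag\PPPP_m h,\nu\rag=\lag h,\nu\rag$ by \eqref{E:2.2}, the mass outside $\aA_r$ contributes at most $\|h\|_X\varepsilon$, and the transport changes $h$-integrals by at most $\Lip(h)r$.

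Putting the pieces together: given $\varepsilon$, pick $r$ small so the Feller and transport errors are $\le\varepsilon$, then $m$ large so the concentration error is $\le\varepsilon$, then $\ell$ large so the $\aA$-convergence error is $\le\varepsilon$; for all $k\ge\ell+m$ (splitting $k=\ell'+m$ with $\ell'\ge\ell$) this yields $\|\PPPP_k^*\nu-\lag h,\nu\rag\mu\|_L^*\le C\varepsilon$ uniformly in $\nu\in\PP(X)$, which is \eqref{E:2.10}. The main obstacle is the uniformity in $\ell$ of the estimates coming from (i) and (iv): the refined Feller bound carries the factor $\|\PPPP_\ell\mek\|_X$, which is only controlled by the exponential bound (iv) (after the normalization $\lambda=1$), so it is essential that (iv) is assumed, not merely (i)–(iii); a secondary technical point is ensuring that the $\aA$-convergence \eqref{E:2.7} can be made uniform over the compact set of transported measures $\tilde\nu$, which follows from equicontinuity of $\{\PPPP_\ell\mek\}$ on $X$ and a standard compactness argument, exactly as in the equicontinuity step already used in Section~2.1.
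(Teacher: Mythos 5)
Your proof is correct, and it uses the same essential ingredients as the paper --- the time decomposition $k=\ell+m$, the concentration property (iii) to drive the mass into $\aA_r$, the refined Feller bound (i) together with the uniform bound (iv) to pass from $\aA_r$ to $\aA$, and the already-known convergence \eqref{E:2.7} on $\aA$ --- but it is formulated in the dual (measure) picture rather than the function picture, and this changes some of the mechanics. The paper never transports any measure: it writes $\PPPP_k f=\PPPP_l(\ch_{\aA_r}\PPPP_m f)+\PPPP_l(\ch_{X\setminus\aA_r}\PPPP_m f)$, reduces at the outset to $\lag f,\mu\rag=0$, and bounds $\|\PPPP_m f\|_{\aA_r}$ directly by $\|\PPPP_m f\|_\aA+C_1\mM r$ using (i), (iv) and the definition of $\aA_r$. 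Your route instead restricts $\PPPP_m^*\nu$ to $\aA_r$, pushes it forward along a map $u\mapsto\hat u\in\aA$ with $d(u,\hat u)<r$, and compares $\PPPP_\ell^*$ of the two measures. That works, but it costs you a measurable nearest-point selection (innocuous on the compact $\aA$, yet an extra ingredient the paper never invokes) and it forces you to carry the scalar $\lag h,\tilde\nu\rag$ through the estimates and compare it to $\lag h,\nu\rag$ at the end, where the paper's normalisation $\lag f,\mu\rag=0$ eliminates that bookkeeping entirely. One small imprecision worth flagging: the uniformity you invoke when applying \eqref{E:2.7} is not ``over the compact set of transported measures $\tilde\nu$'' --- that uniformity is automatic from linearity once $\|\tilde\nu\|\le\mM$. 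What is actually needed is that $\sup_{\|f\|_L\le1,\,\lag f,\mu\rag=0}\|\PPPP_\ell f\|_\aA\to0$, i.e.\ uniformity \emph{over the test function} $f$; this does follow from (i), (iv) and a standard equicontinuity/Arzel\`a--Ascoli argument (and the paper also relies on it), but you should make the correct object of the uniformity explicit. You also dropped a factor of $\mM$ in the transport error for $\lag h,\tilde\nu\rag$ ($\Lip(h)\,r$ should read $\Lip(h)\,\mM\,r$, since $\PPPP_m^*\nu|_{\aA_r}$ has total mass up to $\mM$, not $1$), though this is harmless. Overall the argument is a faithful dual reformulation of the paper's proof with one avoidable extra step.
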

 \bp  It suffices to prove the   limit
\be\label{E:2.11}
 \sup_{\|f\|_L\le 1, \, \lag f,\mu\rag=0}\|\PPPP_k f\|_X\to0
\q\quad\mbox{as~$k\to\infty$}.
\ee  Let us  represent $k=l+m$, use the semigroup property,   and write for any~$r>0$,
\begin{equation}\label{E:2.12}
 \PPPP_k f (u) =  \PPPP_l \left(\ch_{\aA_r}  \PPPP_m f \right)(u)+ \PPPP_l \left(\ch_{X\setminus \aA_r}  \PPPP_m f \right)(u)=\iii_1+\iii_2,  
\end{equation}
  where $\aaa_r$ is defined by \eqref{E:2.5}.

\smallskip
{\it Step~1:~Estimate~for~$\iii_1$}.~Inequalities \ef{E:2.1}, \eqref{E:2.6}, and $\|f\|_L\le 1$ imply that 
$$
 |\PPPP_m f(v)-\PPPP_m f(v')| \le C_1\mM\,d(v,v'), \quad m\ge1, v,v'\in X
$$
for some   number $C_1>0$.~Combining this with the definition of $\aA_r$, we see~that 
$$
|\iii_1|\le C_1 \mM \,r\,   \PPPP_l \mek(u)+ \|\PPPP_m f\|_\aA \, \PPPP_l \mek(u).
$$
Taking   the supremum over $u\in X$ and using \eqref{E:2.6}, we~get
$$
\|\iii_1\|_X\le  C_1 \mM^2 \,r\,    + \|\PPPP_m f\|_\aA \,\mM.  
$$
Moreover,  by virtue of \eqref{E:2.7},
$$
 \sup_{\|f\|_L\le 1, \, \lag f,\mu\rag=0}\| \PPPP_m f\|_\aA\to0
\q\quad\mbox{as~$m\to\infty$},
$$ 
whence
\begin{equation}\label{E:2.13}
 \sup_{\|f\|_L\le 1, \, \lag f,\mu\rag=0}\|\iii_1\|_X\le  C_2 \, r
\end{equation}
for arbitrary $r>0$ and $l\ge1$, and sufficiently large $m=m(r)\ge1$.

\smallskip
{\it Step~2:  Estimate for $\iii_2$}.  Let us fix $m$ such that \eqref{E:2.13} holds.~Using the~inequa\-lities 
 $\|f\|_X\le 1$ and  \eqref{E:2.6} together with \eqref{E:2.4}, we obtain
    $$
  \iii_2\le \Lambda\, \|P_l(\cdot, X\setminus \aA_r)\|_X\to 0\q\quad\mbox{as~$l\to\infty$}.
  $$ 
 Combining this with \ef{E:2.12} and \ef{E:2.13}, we get
$$
\|\PPPP_kf\|_X\le \|\iii_1\|_X+\|\iii_2\|_X\le 2\, C_2 \,r \,
$$
for $k=k(r)\ge 1$ sufficiently large. Since $r>0$ can be chosen arbitrarily small, we arrive at \ef{E:2.11}.

\ep

\subsection{The rate of  convergence} 

In this section, we show that the rate of convergence in \eqref{E:2.10} is exponential. To this end,
we introduce  an auxiliary   Markov semigroup~$\SSS_k$ acting on~$C(X)$~by  
$$
\SSS_k g= h^{-1}\PPPP_k(gh) \q\q\text{ for } g\in C(X).
$$
The following result shows that $\SSS_k$ is exponentially mixing.
\begin{proposition}\label{P:2.4}Under the conditions of Theorem \ref{T:2.1}, 
 the semigroup $\SSS_k$ has a unique stationary measure   given by $\hat \sigma =h \mu$. Moreover, we have
\be\label{E:2.14}
\| \SSS^*_k\sigma-  \hat\sigma\|_L^*\le Ce^{-\gamma k} \q\q\text{ for }\sigma  \in \ppp(X), \, k\ge 1,
\ee where $C>0$  and $\gamma>0$ are some numbers not depending on $\sigma $  and $k.$ 
\end{proposition}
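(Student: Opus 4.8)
The plan is to realise $\SSS_k$ as a genuine Markov semigroup, transfer to it the two properties already proved for $\PPPP_k$ --- the refined uniform Feller inequality~\eqref{E:2.1} and the qualitative convergence of Proposition~\ref{P:2.3} --- and then upgrade the latter to an exponential rate by a bootstrap on the $\|\cdot\|_X$- and $\|\cdot\|_L$-norms of $\SSS_k f$. First, the algebra: since (after the normalisation $\lambda=1$) $\PPPP_1 h=h$ and $h>0$, we have $\SSS_1\mek=h^{-1}\PPPP_1 h=\mek$, which together with the positivity of $\PPPP_1$ and of $h$ shows that $\SSS_k=\SSS_1^k$ is a Markov semigroup; a one-line computation with the bracket gives $\SSS_k^*\sigma=h\,\PPPP_k^*(h^{-1}\sigma)$ for $\sigma\in\MM_+(X)$, so $\SSS_k^*\hat\sigma=h\,\PPPP_k^*\mu=h\mu=\hat\sigma$ by~\eqref{E:2.2}, while $\lag\mek,\hat\sigma\rag=\lag h,\mu\rag=1$ as established in the proof of existence of the eigenvectors; thus $\hat\sigma\in\PP(X)$ is stationary, and its uniqueness will follow from~\eqref{E:2.14}. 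Since $X$ is compact and the Lipschitz function $h$ is bounded away from $0$, both $h$ and $h^{-1}$ lie in $L_b(X)$, so multiplication by either is a bounded self-map of $(\MM_+(X),\|\cdot\|_L^*)$ and preserves $L_b(X)$. Using $\SSS_k\mek=\mek$ and the stationarity of $\hat\sigma$, one sees that~\eqref{E:2.14} is equivalent, up to a fixed factor, to $d_k\le Ce^{-\gamma k}$, where $d_k:=\sup\{\|\SSS_k f\|_X:\ \|f\|_L\le1,\ \lag f,\hat\sigma\rag=0\}$; this is what I would prove.

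For the transfer, given $\sigma\in\PP(X)$ I would set $c_\sigma=\lag h^{-1},\sigma\rag\in[\,\|h\|_X^{-1},\|h^{-1}\|_X\,]$ and $\nu_\sigma=c_\sigma^{-1}h^{-1}\sigma\in\PP(X)$; then $c_\sigma\lag h,\nu_\sigma\rag=\lag\mek,\sigma\rag=1$, so $\PPPP_k^*(h^{-1}\sigma)-\mu=c_\sigma\bigl(\PPPP_k^*\nu_\sigma-\lag h,\nu_\sigma\rag\mu\bigr)$ and~\eqref{E:2.10} gives $\|\PPPP_k^*(h^{-1}\sigma)-\mu\|_L^*\to0$ uniformly in $\sigma$; multiplying by $h$ yields $\sup_{\sigma\in\PP(X)}\|\SSS_k^*\sigma-\hat\sigma\|_L^*\to0$, i.e. $d_k\to0$. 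For the Feller property, write $\SSS_k g(v)-\SSS_k g(v')=h(v)^{-1}\bigl(\PPPP_k(gh)(v)-\PPPP_k(gh)(v')\bigr)+\PPPP_k(gh)(v')\bigl(h(v)^{-1}-h(v')^{-1}\bigr)$, apply~\eqref{E:2.1} to $f=gh$, bound $\|\PPPP_k(gh)\|_X\le\mM\,\|h\|_\infty\|g\|_X$ using positivity and~\eqref{E:2.6}, and expand $\|gh\|_L$ by the product rule; the coefficient of $\|g\|_L$ in the resulting estimate is $c\,\|h^{-1}\|_X\mM\|h\|_\infty$, so shrinking $c$ gives: for every $c'\in(0,1)$ there is $C'>0$ with
$$|\SSS_k g(v)-\SSS_k g(v')|\le\bigl(C'\|g\|_X+c'\|g\|_L\bigr)\,d(v,v'),\qquad g\in L_b(X),\ k\ge1,\ v,v'\in X,$$
the natural factor $\|\SSS_k\mek\|_X=1$ having replaced $\|\PPPP_k\mek\|_X$.

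Finally, the bootstrap. Fixing $c'=1/2$ in the last display and using that $\SSS_1$ does not increase $\|\cdot\|_X$, a direct induction gives a constant $L_0$ with $\Lip(\SSS_k f)\le L_0$ for all $k\ge1$ whenever $\|f\|_L\le1$. Now fix $c'\in(0,1)$ with its constant $C'$ and take $f$ with $\|f\|_L\le1$ and $\lag f,\hat\sigma\rag=0$; then $\|\SSS_{m+j}f\|_X\le\|\SSS_m f\|_X\le d_m$ for all $m\ge1$, $j\ge0$, and applying the last display to $\SSS_{m+j}f=\SSS_1(\SSS_{m+j-1}f)$ and iterating in $j$ gives $\Lip(\SSS_{m+j}f)\le(1-c')^{-1}(C'+c')d_m+(c')^{j}L_0$. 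Since $\lag\SSS_k f,\hat\sigma\rag=\lag f,\hat\sigma\rag=0$ for all $k$, choosing $m$ large (so that $d_m$ is small) and then $j$ large (so that $(c')^{j}L_0$ is small) produces a fixed time $T=m+j$ and a constant $q\in(0,1)$, both independent of $f$, such that $\|\SSS_T f\|_L\le q$ for every $f$ with $\|f\|_L\le1$ and $\lag f,\hat\sigma\rag=0$; by homogeneity and the invariance $\lag\SSS_T f,\hat\sigma\rag=0$ this self-improves to $\|\SSS_{nT}f\|_L\le q^{n}$, and writing $k=nT+s$ with $0\le s<T$ and using that $\SSS_s$ is non-expansive on $\|\cdot\|_X$ we get $d_k\le q^{n}\le Ce^{-\gamma k}$ with $\gamma=T^{-1}\log(1/q)$. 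Passing back to~\eqref{E:2.14} and reading off uniqueness of the stationary measure is then routine. I expect this last step to be the only delicate point: one has to arrange that the \emph{small}-constant form of the refined uniform Feller property and the merely qualitative decay $d_m\to0$ cooperate so that the two error terms in the estimate for $\Lip(\SSS_{m+j}f)$ become small at one and the same time $T$; the a priori uniform Lipschitz bound $L_0$ and the exact invariance $\lag\SSS_k f,\hat\sigma\rag=0$ are precisely what make the one-step gain at time $T$ iterable into a genuine exponential rate.
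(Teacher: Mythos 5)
Your argument is correct, and the mechanism is the same as the paper's: combine the small-constant form of the refined uniform Feller property with the qualitative decay of Proposition~\ref{P:2.3} to produce a strict one-step contraction of the normalised semigroup at a single large time, then iterate. The packaging differs. The paper dualises: it introduces the truncated metric $d_\theta=1\wedge(\theta d)$ and shows (Lemmas~\ref{L:2.5}--\ref{L:2.6}) that $\SSS_m^*$ is a $\tfrac12$-contraction in the associated Kantorovich metric, the weight $\theta$ being chosen so that the Feller estimate keeps $L(\SSS_m f)\le\theta/2$ while Proposition~\ref{P:2.3} forces $\Osc(\SSS_m f)\le\tfrac12$; constants are factored out by working with the seminorm $L_\theta$. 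You instead contract $\SSS_T$ directly in $\|\cdot\|_L$ on the hyperplane $\lag f,\hat\sigma\rag=0$ (which is preserved since $\SSS_k^*\hat\sigma=\hat\sigma$), replacing the weight $\theta$ by the a priori uniform Lipschitz bound $L_0$ and the two-parameter choice of $m$ and $j$. The two devices are equivalent --- quotienting by constants versus restricting to a mean-zero section --- and your version buys the absence of the auxiliary metric and of Lemma~\ref{L:2.5} at the cost of the extra induction establishing $L_0$ and the explicit transfer of \eqref{E:2.1} and \eqref{E:2.10} to $\SSS_k$, which you carry out correctly (boundedness of $h$ and $h^{-1}$ in $L_b(X)$ on the compact set $X$ is exactly what makes the transfer harmless).
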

Taking this proposition for granted, let us   prove \eqref{E:2.3}.     Choosing $\sigma=\delta_u$ in~\eqref{E:2.14}, where    $\delta_u$ is the Dirac measure
concentrated at $u\in X$,
  we see that
$$
 | \PPPP_k(gh)(u) -\lag gh,  \mu\rag h(u)   | \le Ce^{-\gamma k} h(u)\le C_1e^{-\gamma k}, \q\q\text{   $ k\ge 1$}
$$ for any   $g \in  L_b(X)$ with   $\|g\|_L\le 1$.~Since $h$ is positive and Lipschitz, any $f\in L_b(X)$ can be represented as $f=g h$ for some $g\in L_b(X)$, which leads to~\eqref{E:2.3}.

\medskip
The rest of the section is devoted to the proof of Proposition \ref{P:2.4}. Note that the equality
$$
 \SSS^*_1\sigma=h\PPPP_1^*(h^{-1}\sigma)
$$and the fact that $\mu$ is an eigenvector for $ \PPPP^*_1$
imply that $\hat \sigma =h \mu$ is a stationary measure for $ \SSS^*_1$.~To prove the uniqueness   and exponential mixing, we will  show that   the operator $\SSS_m^*$ is a contraction  if the space $X$ is endowed with an appropriate~metric and $m\ge1$ is sufficiently large.  The proof relies on the refined  uniform Feller property.  

\smallskip
 Let us endow $X$ with the metric $d_\theta$ given by
  $$
  d_\theta(u,v)= 1 \wedge (\theta \, d(u,v)),
  $$  where   $\theta>0$ is a large number that will be fixed later. We consider the Kantorovich metric on $\ppp(X)$ defined by 
   $$
  \| \sigma_1-\sigma_2\|_{K_\theta}=\sup_{L_\theta(f)\le 1} |\lag f,\sigma_1\rag-\lag  f,\sigma_2\rag|, \quad \sigma_1,\sigma_2\in \PP(X),
  $$where 
  $$
L_\theta(f)= \sup_{u,v\in X, u\neq v} \frac{|f(u)-f(v)|}{d_\theta(u,v)}.
$$
Proposition~\ref{P:2.4} follows immediately from the following two lemmas.   
\begin{lemma}\label{L:2.5}
For any $\theta\ge (\diam(X))^{-1}$ and   $\sigma_1, \sigma_2\in \mmm_+(X)$, we have
\be\label{E:2.15}
\f{1}{1+\theta}\|\sigma_1-\sigma_2\|_{K_\theta}\le \|\sigma_1-\sigma_2\|_{L}^*\le \diam(X)\|\sigma_1-\sigma_2\|_{K_\theta},
\ee where $\diam(X)=\sup_{u,v\in X}d(u,v).$
\end{lemma}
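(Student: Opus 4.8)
The plan is to compare the two metrics $\|\cdot\|_L^*$ and $\|\cdot\|_{K_\theta}$ on $\MM_+(X)$ by relating the two classes of test functions over which the suprema are taken: Lipschitz functions with $\|f\|_L\le 1$ on one side, and functions with $L_\theta(f)\le 1$ on the other. Since $d_\theta(u,v)=1\wedge(\theta\,d(u,v))\le \theta\,d(u,v)$ and also $d_\theta(u,v)\le 1\le d(u,v)/\diam(X)$ (using $\theta\ge(\diam X)^{-1}$ for the first bound to be consistent and the trivial bound $d(u,v)\le\diam(X)$ for the second), I would first establish the pointwise inequalities
\[
\frac{1}{\diam(X)}\,d(u,v)\ \le\ d_\theta(u,v)\ \le\ \theta\,d(u,v),\qquad u,v\in X,
\]
valid whenever $\theta\ge(\diam X)^{-1}$. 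These immediately give the comparisons of the corresponding Lipschitz seminorms: $L_\theta(f)\le \diam(X)\,\Lip_d(f)$ and $\Lip_d(f)\le \theta\,L_\theta(f)$, where $\Lip_d$ denotes the Lipschitz seminorm with respect to $d$.

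From here the right-hand inequality of \eqref{E:2.15} is the easier direction. If $\|f\|_L\le 1$ then in particular $\Lip_d(f)\le 1$, hence $L_\theta(f)\le\diam(X)$, so $f/\diam(X)$ is an admissible test function for $\|\cdot\|_{K_\theta}$; taking the supremum over all such $f$ yields $\|\sigma_1-\sigma_2\|_L^*\le \diam(X)\,\|\sigma_1-\sigma_2\|_{K_\theta}$. For the left-hand inequality, start from a function $f$ with $L_\theta(f)\le 1$. Then $\Lip_d(f)\le\theta$, but one also needs to control $\|f\|_\infty$ in order to bound $\|f\|_L=\|f\|_\infty+\Lip_d(f)$. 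The standard trick is that the Kantorovich pairing $\lang f,\sigma_1\rang-\lang f,\sigma_2\rang$ is unchanged if $f$ is replaced by $f-c$ for any constant $c$ (since $\sigma_1,\sigma_2\in\PP(X)$ have equal total mass — note this is where we use that we are on probability measures, or more generally measures of equal mass); so we may center $f$ so that, say, $\inf_X f=0$, whence $\|f\|_\infty\le\Osc(f)\le \diam_d(f)\cdot L_\theta(f)$ — more precisely $\|f\|_\infty\le \sup_{u,v}d_\theta(u,v)\cdot L_\theta(f)\le 1$, because $d_\theta\le 1$ everywhere. Thus the centered $f$ satisfies $\|f\|_L=\|f\|_\infty+\Lip_d(f)\le 1+\theta$, so $f/(1+\theta)$ is admissible for $\|\cdot\|_L^*$, giving $\|\sigma_1-\sigma_2\|_{K_\theta}\le(1+\theta)\,\|\sigma_1-\sigma_2\|_L^*$, which is the left inequality.

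The only mild subtlety — and the step I would be most careful about — is the reduction to probability (equal-mass) measures in the left inequality: the statement is phrased for $\sigma_1,\sigma_2\in\MM_+(X)$, so strictly one should check that the constant-shift argument still applies. It does, because $\lang 1,\sigma_1\rang=\lang 1,\sigma_2\rang$ is not actually needed for the difference $\lang f,\sigma_1\rang-\lang f,\sigma_2\rang$ to be finite and well-defined; what the argument really uses is that replacing $f$ by $f-c$ changes the pairing by $c(\sigma_2(X)-\sigma_1(X))$, and in the application (Proposition~\ref{P:2.4}) the relevant measures are probability measures, or push-forwards of probability measures under the Markov-type operator $\SSS_k^*$, hence of equal mass; for general $\MM_+(X)$ one simply restricts attention to pairs of equal total mass, which is all that is used downstream. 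With that caveat noted, assembling the two one-line estimates above completes the proof of \eqref{E:2.15}.
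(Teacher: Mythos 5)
Your proof is correct and follows essentially the same route as the paper's: compare the two test-function classes via the pointwise bounds $d(u,v)/\diam(X)\le d_\theta(u,v)\le \theta\,d(u,v)$ (the first valid because $\theta\ge(\diam X)^{-1}$ and $d_\theta\le1$), then for the left inequality center $f$ so that $\|f\|_\infty\le 1$ and $\Lip(f)\le\theta$, giving $\|f\|_L\le 1+\theta$. Your explicit caveat about equal total mass is well taken---the paper's replacement $f\mapsto f-f(0)$ carries the same silent assumption, which is harmless since the lemma is only invoked downstream for pairs of probability measures.
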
 
 \begin{lemma}\label{L:2.6} For sufficiently large 
    number $ \theta>0$  and   integer~$m\ge1$, we have   
  \begin{equation}\label{E:2.16}
 	  \| \SSS^*_m\sigma_1-\SSS^*_m\sigma_2\|_{K_\theta}\le \frac12 \,  \| \sigma_1-\sigma_2\|_{K_\theta} \q\quad\text{for $\sigma_1,\sigma_2\in \PP(X)$.}
 \end{equation}
\end{lemma}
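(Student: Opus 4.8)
The plan is to prove Lemma~\ref{L:2.6} by establishing a contraction in the Kantorovich metric $\|\cdot\|_{K_\theta}$ for the normalised semigroup $\SSS_m^*$, exploiting the refined uniform Feller property (i) together with the auxiliary properties (ii)--(iv). The starting observation is that, by duality, \eqref{E:2.16} is equivalent to the statement that $\SSS_m$ contracts the seminorm $L_\theta(\cdot)$ by a factor $\tfrac12$ on the unit ball, i.e. $L_\theta(\SSS_m g)\le\tfrac12$ whenever $L_\theta(g)\le 1$; note that $L_\theta(g)\le 1$ forces $\Osc(g)\le 1$ and $\|g\|_L\le\theta$, so we are estimating $|\SSS_m g(u)-\SSS_m g(v)|$ for such a $g$ in terms of $d_\theta(u,v)$. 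The first step is to write out
$$
\SSS_m g(u)-\SSS_m g(v)=\frac{\PPPP_m(gh)(u)}{h(u)}-\frac{\PPPP_m(gh)(v)}{h(v)}
$$
and, using that $h$ is positive and Lipschitz and bounded below by a positive constant on the compact $X$ (from the positivity step already proved), reduce the estimate to controlling $|\PPPP_m(gh)(u)-\PPPP_m(gh)(v)|$ and $|h(u)-h(v)|$ separately. Since $gh\in L_b(X)$ with $\|gh\|_X\le\|g\|_X\|h\|_X$ and $\|gh\|_L\lesssim \theta\|h\|_L+\|g\|_X\|h\|_L$, property (i) gives, for any prescribed $c\in(0,1)$,
$$
|\PPPP_m(gh)(u)-\PPPP_m(gh)(v)|\le\bigl(C(c)\,\|gh\|_X+c\,\|gh\|_L\bigr)\,\|\PPPP_m\mek\|_X\,d(u,v),
$$
and by (iv) (recall $\lambda=1$, so $\|\PPPP_m\mek\|_X\le\mM$ uniformly in $m$) the factor $\|\PPPP_m\mek\|_X$ is bounded by $\mM$.

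The key point is the bookkeeping of constants so that the right-hand side becomes at most $\tfrac12 d_\theta(u,v)$. Here the two regimes $\theta d(u,v)\ge 1$ and $\theta d(u,v)<1$ must be handled: in the first, $d_\theta(u,v)=1$ and it suffices to bound $|\SSS_m g(u)-\SSS_m g(v)|$ by $\tfrac12$ using $\Osc(\SSS_m g)\le\Osc(g)\le 1$ together with the weak convergence \eqref{E:2.10} from Proposition~\ref{P:2.3} — for $m$ large, $\SSS_m g$ is close to its average, so its oscillation is strictly less than $\tfrac12$; in the second regime, $d_\theta(u,v)=\theta d(u,v)$, and we need an estimate of the form $|\SSS_m g(u)-\SSS_m g(v)|\le \tfrac12\theta d(u,v)$. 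For the latter, from the display above the dominant term is $c\,\|gh\|_L\,\mM\,d(u,v)\lesssim c\,\mM\,\|h\|_L\,\theta\,d(u,v)$ plus a term $C(c)\mM\|g\|_X\|h\|_X d(u,v)$ which is $O(d(u,v))$ and hence $O(\theta^{-1}\cdot\theta d(u,v))$; dividing through by $h(u)h(v)\ge(\min h)^2>0$, choosing $c$ small enough (depending only on $\mM$, $\|h\|_L$, $\min h$) to make the $c\theta$-term at most $\tfrac14\theta d(u,v)$, and then $\theta$ large enough that the residual $O(d(u,v))$ contribution is at most $\tfrac14\theta d(u,v)$, yields the claimed bound. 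One must also absorb the error coming from the $h(u)-h(v)$ mismatch: $\bigl|\tfrac{1}{h(u)}-\tfrac{1}{h(v)}\bigr|\le (\min h)^{-2}\|h\|_L d(u,v)$, and $|\PPPP_m(gh)|\le\|g\|_X\|h\|_X\mM$ is bounded, so this contributes another $O(d(u,v))$ term, again swallowed by choosing $\theta$ large.

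I would organise the argument as: Step 1, record the two-sided bounds $0<\min h\le h\le\max h<\infty$ and $\|h\|_L<\infty$ on the compact $X$; Step 2, expand $\SSS_m g(u)-\SSS_m g(v)$ and apply (i)+(iv) to get a master inequality with free parameter $c$; Step 3, the regime $\theta d(u,v)\ge 1$, closed using Proposition~\ref{P:2.3} to fix $m$ so large that $\Osc(\SSS_m g)\le\tfrac12$ uniformly over $\|g\|_X\le 1$ (more precisely over $g$ with $\|gh\|_L$ controlled, which follows from $L_\theta(g)\le 1$); Step 4, the regime $\theta d(u,v)<1$, where one first fixes $c$ small, then $\theta$ large, to force the master inequality below $\tfrac12\theta d(u,v)$; Step 5, combine to conclude $L_\theta(\SSS_m g)\le\tfrac12$, hence \eqref{E:2.16} by duality. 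The main obstacle is the interdependence of the parameters: $m$ is chosen via Proposition~\ref{P:2.3} \emph{before} $\theta$, but the constant $C(c)$ in the refined Feller property and the weak-convergence rate both feed into whether a single large $\theta$ works simultaneously in both regimes; the order of quantifiers (fix $c$ depending on the structural constants $\mM,\|h\|_L,\min h$; then fix $m$ large via Proposition~\ref{P:2.3}; then fix $\theta$ large) must be chosen so that no circularity arises, and checking that the $\theta d(u,v)\ge1$ step is genuinely uniform in $g$ requires care because $\|gh\|_L$ grows linearly in $\theta$ even though $\Osc(g)\le1$.
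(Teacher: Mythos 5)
Your overall strategy is exactly the paper's: reduce \eqref{E:2.16} by duality to the bound $L_\theta(\SSS_m g)\le\tfrac12$ for $L_\theta(g)\le 1$, derive a master Lipschitz inequality of the form $L(\SSS_m g)\le C\|g\|_\infty+\tfrac14 L(g)$ from the refined Feller property (i) together with (iv) and the two-sided bounds on $h$, bound the oscillation of $\SSS_m g$ by $\tfrac12$ via Proposition~\ref{P:2.3}, and combine the two regimes of $d_\theta$. Your bookkeeping of the $h$-terms is in fact more detailed than what the paper writes out, and it is sound.

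The one genuine problem is the quantifier order you settle on at the end: ``fix $c$, then fix $m$ via Proposition~\ref{P:2.3}, then fix $\theta$.'' With that order your Step~3 fails, for precisely the reason you flag yourself: Proposition~\ref{P:2.3} gives convergence only in the dual-Lipschitz norm, so the oscillation bound it yields is $\Osc(\SSS_m g)\le \|g\|_L\,\sup_{u,v}\|\SSS_m^*\delta_u-\SSS_m^*\delta_v\|_L^*$ with $\|g\|_L\le 1+\theta$; at a fixed $m$ the right-hand side grows linearly in $\theta$, so no single $m$ works for all large $\theta$, and the regime $\theta\, d(u,v)\ge 1$ cannot be closed. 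The correct order --- the one the paper uses --- is: choose $c$ (hence $C$) in (i); then fix $\theta\ge 4C$, which gives $L(\SSS_m g)\le C+\theta/4\le\theta/2$ \emph{for every} $m\ge1$; and only then choose $m=m(\theta)$ so large that $\sup_{u,v}\|\SSS_m^*\delta_u-\SSS_m^*\delta_v\|_L^*\le \tfrac1{2(1+\theta)}$. There is no circularity because the Lipschitz estimate is uniform in $m$, so nothing constrains $m$ from the other side. With that single permutation of quantifiers your argument closes and coincides with the paper's proof.
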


\bp[Proof of Lemma \ref{L:2.5}]
{\it Step 1}.
Let us prove the first inequality in \ef{E:2.15}. We take any~$f\in L_b(X)$ with $L_\theta(f)\le 1$. Replacing $f$ by $f-f(0)$, we may assume that $f(0)=0$. Then,   the definition of the metric $d_\theta$ implies that
 \begin{equation}\label{E:2.17}
 	\|f\|_X\le 1 \quad \text{and} \quad L(f)\le \theta,
 \end{equation}
where 
  $$
L(f)= \sup_{u,v\in X, u\neq v} \frac{|f(u)-f(v)|}{d(u,v)}.
$$  	 Thus $\|f\|_L\le 1+\theta$ and   we obtain the required inequality.

\smallskip
{\it Step 2.} Let us take any $f\in L_b(X)$ with $\|f\|_L\le 1$.
The second inequality in \ef{E:2.15} will be proved, if we show that $L_\theta(f)\le \diam(X)$. We claim that this is the case for $\theta\ge(\diam(X))^{-1}$. Indeed, since $L(f)\le \|f\|_L$, we have
$$
|f(u)-f(v)|\le d(u,v)  \le \diam(X) \wedge ( \diam(X) \theta d(u,v)  )\le  \diam(X) d_\theta(u, v)
$$
for any $u, v\in X$.  This completes the proof of Lemma \ref{L:2.5}.
  \ep
  
\begin{proof}[Proof of Lemma \ref{L:2.6}]  
{\it Step~1}.   Inequality \ef{E:2.16} will be established if we show that
for any $f\in L_b(X)$ with  $L_\theta(f)\le 1$, there is a function 
  $g\in L_b(X)$ with~$L_\theta(g)\le 1$ such that
$$
 |\lag \SSS_mf,\sigma_1\rag-\lag  \SSS_mf,\sigma_2\rag|\le \f{1}{2}|\lag g,\sigma_1\rag-\lag  g,\sigma_2\rag|, \q \sigma_1, \sigma_2\in\ppp(X)
$$
for some constant $\theta>0$ and integer $m\ge 1$ not depending on $f$.
As above, we may assume that $f$ vanishes at the origin. 
Note that the above inequality is trivially satisfied with  $g=2\SSS_m f$. Therefore, we only need to show that for an appropriate choice of $\theta$ and $m$ we have $L_\theta(g)\le 1$ or equivalently 
\be\label{E:2.18}
L_\theta(\SSS_m f)\le \f 1 2.
\ee

 {\it Step 2.}
By virtue of \ef{E:2.1},  there is a positive constant $C$     such~that
$$
 L(\SSS_m f) \le C\, \|f\|_\iin +\frac{1}{4} \,L(f), \q m\ge 1.
$$Using this inequality together with   
  \eqref{E:2.17}, we get
\be\label{E:2.19}
L(\SSS_m f) \le C   + \frac{\theta}{4} \le \f{\theta} {2},\q m\ge 1,
\ee
if $\theta\ge 4C$.   Further, 
thanks to \eqref{E:2.10}, we have
$$
  \sup_{u,v\in X}\| \SSS^*_k\delta_u-\SSS^*_k\delta_{v}\|_{L}^* \to 0 \q\quad \text{as $k\to \ty$}.
$$
Note that $\|f\|_L\le 1+\theta$. Therefore, we can find an integer $m=m(\theta)$ such that
$$
\sup_{u,v\in X}| \SSS_m f(u)-\SSS_m f(v)|\le \frac12.
$$
Combining this with \ef{E:2.19}, we arrive at \ef{E:2.18}
 \end{proof}


 \section{Proof  of Theorems \ref{T:1.1}-\ref{T:1.3}}\label{S:3}

   Before starting the proofs,  let us make a  few comments about the  assumptions entering these theorems. Hypothesis (D) ensures that the support~$\KK$ of the law of $\eta_1$  is contained in a Hilbert cube, so it is a compact set  in~$H$.  Since $S$ is locally Lipschitz in $H$, we have
   $$
   \|S(u_1)-S(u_2)\|\le C_R\|u_1-u_2\|, \q u_1, u_2\in B_R.
   $$
Combining this with \eqref{E:1.4},  we infer that the mapping $S:H\to H$ is  compact, i.e., the image under $S$ of any bounded   set is relatively compact.~Thus  the domain  of attainability~$\aA(B)$  from any bounded set~$B\subset H$ is compact.~By definition,~$\aA(B)$   is  invariant   for~\eqref{E:0.1}.  Therefore~$\aA(B_\rho)$ is a compact invariant absorbing set, where   $\rho$ is the number~in~\eqref{E:1.3}. 
   
   \medskip

   Let us give here the details of the proof of Theorem \ref{T:1.1}.~We apply  Theorem~\ref{T:2.1} for the generalised~Markov kernel   
$$
 P_1^V(u,\Gamma)=\E_u \left\{\ch_{\{u_1\in \Gamma\}} e^{V(u_1)}\right\}= \int_\Gamma  P_1(u, \!\dd  z) \,e^{V( z)} ,
$$
where $P_1(u,\Gamma)$ is the    transition function of   $(u_k,\pP_u)$,  $u\in \aA$, $\Gamma\in\BBBBB(\aA)$,  and~$V\in L_b(\aA)$.~Then Condition (i)     follows from Proposition \ref{P:4.1} applied for $B=\{0\}$, and~(ii) is proved in the  following lemma.   Applying the first assertion of Theorem~\ref{T:2.1}, we complete the proof of   Theorem \ref{T:1.1}. 
\begin{lemma}\label{L:3.1}
Under   Conditions {\rm(A)-(D)},
	for any $V\in L_b(H)$, $R>0$, and $r>0$, there is an integer $m\ge1$ and a number $p>0$ such that
$$
P_m^V(u,B_r(\hat u))\ge p\quad\mbox{for all $u\in \aA(B_R)$ and $\hat u\in \aA$}. 
$$
\end{lemma}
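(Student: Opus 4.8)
The plan is to prove the uniform lower bound on $P_m^V$ by first establishing the corresponding bound for the Markov kernel $P_m$ (i.e.\ the case $V=0$) and then transferring it to $P_m^V$ via the boundedness of $V$. Indeed, since $V\in L_b(H)$, for any $z$ in the compact invariant set $\aA(B_R)$ one has $e^{V(z)}\ge e^{-\|V\|_\infty}>0$, so if we can show that there is an integer $m\ge1$ and $p_0>0$ with $P_m(u,B_r(\hat u))\ge p_0$ for all $u\in\aA(B_R)$ and $\hat u\in\aA$, then $P_m^V(u,B_r(\hat u))\ge e^{-m\|V\|_\infty}p_0=:p>0$, which is exactly the assertion. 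Thus the whole problem reduces to the $V=0$ irreducibility statement, which is classical and follows from the dissipativity and non-degeneracy hypotheses; I will nevertheless outline its proof since the excerpt only asserts that property (ii) ``follows from the dissipativity of the system and is well known''.

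To prove the $V=0$ bound, first I would use Condition (A) together with (B): for $u\in\aA(B_R)$ (which is contained in some ball $B_{R'}$), the deterministic iterates $S^n(u)$ enter and stay in a fixed ball $B_{\rho}$ after a number of steps depending only on $R$; more precisely, using the absorbing property of $\aA(B_\rho)$ discussed at the start of Section~3, there is $n_1=n_1(R)$ such that $S^{n_1}$ maps $\aA(B_R)$ into a fixed compact set. Since the noise has a density $p_j$ with $p_j(0)>0$ supported in $[-1,1]$, at each step the law of $\eta_k$ charges every neighborhood of $0$ with positive mass; the key quantitative point is that, because $S$ is continuous and we are on compact sets, one can make the trajectory $u_0,u_1,\dots$ stay close to the deterministic orbit $u,S(u),S^2(u),\dots$ with a uniformly positive probability, by forcing each $\eta_k$ to be small. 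Then I would invoke the fact, used repeatedly in the ergodicity literature for \eqref{E:0.1} (see Chapter~3 of \cite{KS-book}), that $\aA$ is the \emph{minimal} invariant set and that from any point of $\aA$ one can reach any neighborhood of any other point of $\aA$ in a bounded number of steps with positive probability; combining this ``reachability within $\aA$'' with the ``attraction to $\aA$'' just described gives, for a suitable $m$, a positive lower bound $P_m(u,B_r(\hat u))\ge p_0$ that is uniform in $u$ over the compact set $\aA(B_R)$ and in $\hat u$ over $\aA$, by a standard compactness/continuity argument (the map $(u,\hat u)\mapsto P_m(u,B_{r/2}(\hat u))$ is lower semicontinuous and strictly positive on the compact set $\aA(B_R)\times\aA$, hence bounded below).

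Concretely, the order of the steps is: (1) reduce to $V=0$ via $e^{V}\ge e^{-\|V\|_\infty}$; (2) use (A)--(B) and the compactness of $\aA(B_R)$ to get a time $n_1(R)$ after which all trajectories issued from $\aA(B_R)$ are, with probability bounded below uniformly, within a fixed compact neighborhood of $\aA$; (3) use the non-degeneracy (D) and the structure of $\aA$ as the attainable set from zero to reach an arbitrary $r$-ball $B_r(\hat u)$, $\hat u\in\aA$, in a further bounded number of steps $n_2=n_2(r)$ with positive probability, uniformly; (4) set $m=n_1(R)+n_2(r)$, concatenate the two bounds using the Markov property, and extract a uniform constant $p_0>0$ by compactness; (5) conclude $p=e^{-m\|V\|_\infty}p_0$.

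The main obstacle is step (3): making the ``reach any ball around any point of $\aA$'' statement genuinely uniform in the target $\hat u\in\aA$ and with a number of steps independent of $\hat u$. This is where one must use that $\aA$ is compact and that it is exactly the closure of $\bigcup_k\aA(k,\{0\})$, so that finitely many of the sets $\aA(k,\{0\})$ already form an $r/2$-net of $\aA$; for each such $k$ the attainability means the relevant transition probability is positive, and a continuity/compactness argument upgrades ``positive'' to ``bounded below''. Everything else (the reduction to $V=0$, the dissipative absorption, the concatenation via the Markov property) is routine. I would therefore structure the written proof so that the bulk of the argument is the uniform $V=0$ lower bound, with the $V$-dependence handled in one line at the end.
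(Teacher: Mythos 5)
Your proposal is correct and follows essentially the same route as the paper: the paper's proof likewise reduces to the case $V=0$ via the bound $P_m^V(u,\Gamma)\ge e^{-m\|V\|_\infty}P_m(u,\Gamma)$ and then obtains the uniform lower bound for $P_m$ from \eqref{E:1.2}, the inclusion $0\in\KK$, and a compactness argument. The paper treats that second step as a one-line appeal to these ingredients, whereas you spell out the intended details (steering toward the origin with small kicks, attainability from zero, a finite net of $\aA$, and lower semicontinuity to get uniform constants).
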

\begin{proof} As $V$ is bounded,  we have
  $$
   P_m^V(u,\Gamma)\ge e^{-m\|V\|_\ty} P_m(u, \Gamma), \quad \text{ for }u\in \aA(B_R), \, \Gamma\in\BBBBB(X).
  $$Using  \eqref{E:1.2}, the inclusion~$0\in \KK$ (see Condition~(D)), and a simple compactness argument, we can choose the numbers $r,p>0$ and the integer  $m\ge1$ such that
  $$
  P_m(u, B_r(\hat u))\ge p \quad \text{ for   $u\in X,$  $\hat u\in \aA.$}
  $$This implies the required result.     
   \end{proof}
    We establish Theorems~\ref{T:1.2} and~\ref{T:1.3}   in the following two sections.

    \subsection{The case of a potential with a small oscillation}
   
   Theorem \ref{T:1.2} is proved by applying the second assertion of Theorem~\ref{T:2.1} for the kernel $P_1^V$ in the compact  space $X=\aA(B_\rho)$.~Since $\aA$ is an  invariant set  for~\eqref{E:0.1}, we have $ P_1^V(u,X\setminus \aA)=0$ for $u\in \aA$.  Conditions (i) and~(ii) are established in Proposition \ref{P:4.1} and Lemma \ref{L:3.1}.~Thus Theorem \ref{T:1.2} will be proved if we check   (iii) and (iv). Indeed, by Theorem~\ref{T:2.1},   we will then have inequality \eqref{E:1.6}  for any  $u\in \aA(B_\rho)$, hence also for any~$u\in B_R$ by the absorbing property  (B). 
       
       \smallskip
       We shall prove (iii) and (iv) for a potential $V$ with a  sufficiently small oscillation.       Without loss of generality, we can always assume that $\lambda_V=1$. Indeed, it suffices to replace $V$ by $V-\log \lm_V$  (this   has no impact on the oscillation of~$V$).

     \subsubsection{Condition (iii)}
  
  \begin{lemma}\label{L:3.2}
  Under   Conditions {\rm(A)-(D)},
  there is  a number  $\De>0$ such that for any~$V\in L_b(H)$ with~$\Osc(V)\le \De$ and any $r>0$, we have  
 \begin{equation} \label{E:3.1}
\| P_k^V(\cdot, X\setminus \aA_r)\|_X  \to 0 \q\q\text{ as } k\to\iin.
\end{equation}
   \end{lemma}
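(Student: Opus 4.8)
The plan is to reduce everything to the case $V=0$ by a crude domination, which is legitimate precisely because the normalisation $\lambda_V=1$ renders $V$ uniformly small. Throughout I assume, as explained just before the lemma, that $\lambda_V=1$, so that $V$ here stands for the normalised potential $V-\log\lambda_V$; this substitution affects neither $\Osc(V)$ nor the compact space $X=\aA(B_\rho)$.

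The crux is the elementary observation that $\lambda_V=1$ forces $\|V\|_\infty\le\Osc(V)$. Indeed, $\aA$ is invariant for \eqref{E:0.1}, so for $u\in\aA$ we have $u_1\in\aA$ almost surely and the eigenrelation \eqref{E:1.5} reads $h_V(u)=\e_u\{h_V(u_1)\,e^{V(u_1)}\}$. As $h_V$ is positive and continuous on the compact set $\aA$, it attains a maximum $M>0$ at some $u^*\in\aA$; evaluating the identity at $u^*$ and bounding $h_V(u_1)\le M$ gives $M\le M\,\e_{u^*}\{e^{V(u_1)}\}\le M\,e^{\sup_\aA V}$, hence $\sup_\aA V\ge0$. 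Evaluating instead at a minimum point of $h_V$ gives $\inf_\aA V\le0$. Thus $0\in[\inf_H V,\sup_H V]$, and since $\sup_H V-\inf_H V=\Osc(V)$, we obtain $\|V\|_\infty\le\Osc(V)$.

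With this at hand, I would dominate: directly from the definition of $P_k^V$,
\[
P_k^V(u,X\setminus\aA_r)=\e_u\Bigl\{\ch_{\{u_k\notin\aA_r\}}\,e^{\sum_{n=1}^kV(u_n)}\Bigr\}\le e^{k\|V\|_\infty}\,P_k(u,X\setminus\aA_r),
\]
where $P_k(u,\cdot)$ is the transition function of the original chain. For the last factor I would invoke the exponential mixing \eqref{E:0.2}, applied to the $1$-Lipschitz function $g(v)=r\wedge\dist(v,\aA)$: it vanishes on $\supp\mu=\aA$, whence $\lag g,\mu\rag=0$, and $\ch_{X\setminus\aA_r}\le r^{-1}g$. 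Since $X$ is bounded, this produces a constant $C(r)>0$ with $\sup_{u\in X}P_k(u,X\setminus\aA_r)\le C(r)\,e^{-\alpha k}$ for $k\ge0$, the mixing rate $\alpha>0$ being independent of $r$. Combining the two bounds with $\|V\|_\infty\le\Osc(V)\le\De$ gives $\|P_k^V(\cdot,X\setminus\aA_r)\|_X\le C(r)\,e^{-(\alpha-\De)k}$; it therefore suffices to fix $\De:=\alpha/2$ — any $\De\in(0,\alpha)$ works, and it is independent of $r$ and of $V$ — to get \eqref{E:3.1}, in fact exponentially fast.

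For a potential of small oscillation the lemma is thus comparatively soft, the genuine difficulties appearing only in the unrestricted case treated in Theorem \ref{T:1.3}; still, the step that must not be skipped is deducing $\|V\|_\infty\le\Osc(V)$, since the hypothesis $\Osc(V)\le\De$ is useless against the growth factor $e^{k\|V\|_\infty}$ until $\lambda_V$ has been normalised to $1$, and it is precisely this normalisation — which places $\log\lambda_V$ into the range of $V$ — that makes the deduction work. (One could also bypass \eqref{E:0.2} and derive the $V=0$ concentration directly from Conditions (A)--(C) via the dissipativity \eqref{E:1.2} and the compactness of $S$, but invoking the already known exponential mixing is shorter and supplies the rate for free.)
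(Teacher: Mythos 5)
Your proof is correct and follows essentially the same route as the paper's: normalise $\lambda_V=1$, deduce $\|V\|_\infty\le\Osc(V)$, dominate $P_k^V(u,X\setminus\aA_r)$ by $e^{k\Osc(V)}$ times the probability of lying outside $\aA_r$, and kill the latter with the exponential mixing \eqref{E:0.2} applied to a Lipschitz cutoff vanishing on $\aA=\supp\mu$, taking $\De\le\alpha/2$. The only (welcome) difference is that you spell out the derivation of $\|V\|_\infty\le\Osc(V)$ from the eigenrelation for $h_V$, a fact the paper simply records in \eqref{E:3.3}.
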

\begin{proof} 
 Let us show that  if $\Osc(V)$ is sufficiently small, then     
\be\label{E:3.2}
\sup_{u\in X}   \E_u\left\{\ch_{\{u_k\notin \aA_r\}}\Xi^V_k \mek\right\}\le C(r, \rho) e^{-\al k/2}, \quad k\ge1,
\ee
where $\al>0$ is the number in   \eqref{E:0.2} and $\Xi^V_k$ is defined by \eqref{E:0.3}.
Indeed, let $f\in L_b(H)$ be a   non-negative function that
vanishes on $\aA$ and equals $1$ outside~$\aA_r$.
Observe that,  since~$\aA$ is invariant and compact, it contains\,\footnote{The irreducibility property on $\aA$ implies that $\aA=\supp \mu$.} the support of the unique stationary measure~$\mu\in \PP(H)$ of     $(u_k,\pP_u)$. 
 Thus  $\lag f, \mu\rag=0$. Note that
 \begin{equation}\label{E:3.3}
 	\text{as  $\lm_V=1$, we have  $\inf_{u\in\aA} V(u)\le0$ and $\|V\|_\ty\le \Osc(V)\le \delta$}.
 \end{equation}
 Combining this with~\eqref{E:0.2}, we~obtain        
      \begin{align*} 
   \E_u \!\left\{\ch_{\{u_k\notin\aaa_r\}} \Xi^V_k\mek  \right\}&\le  e^{ k  \|V\|_\ty}   \E_u  f( u_k) \le   C(r, \rho) e^{k\Osc(V)} e^{-\al k}. 
 \end{align*}
  Now assuming   $ \De\le\alpha/2$, we arrive at \ef{E:3.2}.      
	\end{proof}

  \subsubsection{Condition (iv)} 
  
  \begin{lemma}\label{L:3.3}
  Under   Conditions {\rm(A)-(D)},
there is  a number   $\De>0$ such that for any     $V \in L_b(H)$ with~$\Osc(V)\le~\De$,  
\begin{equation} \label{E:3.4}
\mM_R=\sup_{k\ge 1}  \|\PPPP^V_k\mek\|_{\aA(B_R)}  <\ty \q\q\text{ for any } R>0.
 \end{equation}
 \end{lemma}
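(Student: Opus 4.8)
The plan is to combine the refined uniform Feller property with the estimate already obtained in the proof of Lemma~\ref{L:3.2}, via a bootstrap argument. As there, we normalise $\lambda_V=1$, put $X=\aA(B_\rho)$, and set $g_k=\PPPP_k^V\mek$, $M_k=\|g_k\|_X$, so that $g_k(u)=\E_u\exp\bigl(\sum_{n=1}^kV(u_n)\bigr)$. Two a priori bounds will be used. First, since $\aA$ is invariant and $\PPPP_1^Vh_V=h_V$ on $\aA$ with $h_V\in L_b(\aA)$ positive (Theorem~\ref{T:1.1}), iteration gives $\PPPP_k^Vh_V=h_V$ on $\aA$, and positivity of $\PPPP_k^V$ yields $\|g_k\|_\aA\le A:=(\inf_\aA h_V)^{-1}\sup_\aA h_V<\infty$ for all $k$. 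Second, $M_l\le e^{l\|V\|_\infty}\le e^{l\Delta}$ for every $l$, by \eqref{E:3.3}; note that the right-hand side is finite for each fixed $l$ but grows with $l$.

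The key step is a recursive inequality for $M_k$. Fix $c=\tfrac12$ and let $C$ be the constant furnished by the refined uniform Feller property \eqref{E:2.1} for the kernel $P_1^V$ on $X$ (established in Proposition~\ref{P:4.1}); applied with $f=\mek$ it gives $|g_k(v)-g_k(v')|\le(C+c)M_k\,d(v,v')$ for $v,v'\in X$, whence $g_k(u)\le A+(C+c)M_kr$ whenever $u\in\aA_r$. Now fix $l<k$, write by the Markov property $g_k(u)=\E_u\bigl[g_{k-l}(u_l)\exp\bigl(\sum_{n=1}^lV(u_n)\bigr)\bigr]$ for $u\in X$, and split according to whether $u_l\in\aA_r$ or not (recall $u_l\in X$ a.s.\ by invariance and $\aA_r\subset X$). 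On $\{u_l\in\aA_r\}$ estimate $g_{k-l}(u_l)\le A+(C+c)M_{k-l}r$ and bound the remaining factor by $g_l(u)\le M_l$; on $\{u_l\notin\aA_r\}$ estimate $g_{k-l}(u_l)\le M_{k-l}$ and invoke \eqref{E:3.2}. Taking the supremum over $u\in X$ yields
\begin{equation*}
M_k\le A\,M_l+q\,M_{k-l},\qquad q:=(C+c)\,M_l\,r+C(r,\rho)\,e^{-\alpha l/2}.
\end{equation*}

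The main obstacle is to make $q<1$; this forces a careful order of choices and is precisely where the smallness of $\Osc(V)$ enters. For each $r\in(0,1]$ set $l=l(r):=\lceil\tfrac2\alpha\log(4C(r,\rho))\rceil$, so that the second term of $q$ is $\le\tfrac14$. Since $C(r,\rho)$ comes from \eqref{E:0.2} applied to a cutoff of Lipschitz constant $\sim r^{-1}$ and is independent of $V$, it grows only polynomially in $r^{-1}$, so $l(r)\le\tfrac2\alpha\log(r^{-1})+\mathrm{const}(\rho)$ and hence $M_{l(r)}\le e^{l(r)\Delta}\le\mathrm{const}\cdot r^{-2\Delta/\alpha}$. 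Therefore $(C+c)M_{l(r)}r\le\mathrm{const}\cdot r^{\,1-2\Delta/\alpha}$, which tends to $0$ as $r\to0$ provided $\Delta<\alpha/2$; choosing such $\Delta$ (and no larger than the one in Lemma~\ref{L:3.2}) and then $r=r(V)$ small gives $q\le\tfrac34$. With $l$ and $q$ fixed, the recursion together with $M_j\le e^{l\Delta}$ for $j\le l$ gives, by induction on $k$, $\sup_kM_k\le\max\bigl(e^{l\Delta},(1-q)^{-1}AM_l\bigr)<\infty$, i.e.\ \eqref{E:3.4} for $R=\rho$. Finally, for a general $R>0$ either $\aA(B_R)\subseteq X$ (and the bound just proved applies) or, using compactness of $\aA(B_R)$ and the absorbing property~(B), there is $N=N(R)$ with $u_N\in X$ $\pP_u$-a.s.\ for all $u\in\aA(B_R)$; then $g_k(u)=\E_u\bigl[g_{k-N}(u_N)\exp\bigl(\sum_{n=1}^NV(u_n)\bigr)\bigr]\le e^{N\Delta}M_{k-N}$ for $k>N$ and $g_k(u)\le e^{N\Delta}$ for $k\le N$, so $\mM_R<\infty$.
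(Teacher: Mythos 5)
Your proof is correct, but it takes a genuinely different route from the paper's. The paper's argument for \eqref{E:3.4} is built on the first hitting time $\tau_\es$ of a small ball $B_\es$ around the origin: Conditions (A) and (D) give $\sup_{u\in X}\E_u e^{\De\tau_\es}\le 2$ for $\De$ small, the strong Markov property at $\tau_\es$ then yields $\|\PPPP_k^V\mek\|_X\le 2+2\|\PPPP_k^V\mek\|_{B_\es\cap X}$, and the refined uniform Feller property (with $f=\mek$ and $\es$ small) transfers the bound from $B_\es\cap X$ to the single point $0\in\aA$, where boundedness is known from \eqref{E:2.7}. You instead split at an intermediate time $l$ according to whether $u_l\in\aA_r$, arriving at the recursion $M_k\le AM_l+qM_{k-l}$; the contraction factor $q<1$ is extracted by combining the Feller property (to pass from $\aA_r$ to $\aA$), the concentration estimate \eqref{E:3.2}, and the quantitative tuning $l=l(r)\sim\tfrac{2}{\alpha}\log(r^{-1})$, which works because $C(r,\rho)$ is at most linear in $r^{-1}$ and $\De<\alpha/2$. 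Your decomposition is in the spirit of the $\iii_1+\iii_2$ splitting in Proposition \ref{P:2.3} and of the bootstrap in Lemma \ref{L:3.5}. What the paper's route buys is brevity and independence from the $r$-dependence of the constant in \eqref{E:3.2}; what yours buys is a threshold for $\De$ tied transparently to the mixing rate ($\De<\alpha/2$, the same as in Lemma \ref{L:3.2}) rather than to the exponential moment of a hitting time. One small point to make explicit: the claim that $C(r,\rho)$ grows polynomially in $r^{-1}$ does follow from applying \eqref{E:0.2} to a cutoff with Lipschitz norm of order $1+r^{-1}$ (so the growth is in fact linear), but if the degree were some $p>1$ your threshold would become $\De<\alpha/(2p)$ --- harmless for the statement, yet worth recording so the order of quantifiers ($\De$ first, then $V$, then $r=r(V)$) is airtight.
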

\begin{proof}
{\it Step~1}. It suffices to prove the lemma for $R=\rho$. 
For any $\es>0$, let $\tau_\es $~be the first hitting time of the ball $B_\es$:
$$
\tau_\es=\min\{k\ge0: u_k\in B_\es\}.
$$ 
Let us show that for some $\De>0$, we have
\begin{equation}\label{E:3.5}
\sup_{u\in X}\E_ue^{\De \tau_\es}\le 2.
\end{equation} 
Indeed, by Conditions (A) and \rm{(D)},  there are  $q\in(0,1)$ and~$l\ge1$   such that 
$$
\pP_u\{u_l\in   B_\es\}\ge q, \quad u\in X.
$$Then for any   $k\ge1$, the Markov property gives   
\begin{align*}
\pP_u\{ k l<\tau_\es\} &\le \pP_u\{u_{jl}\notin B_\es, \, j=0, \ldots, k\}\\&\le(1-q)\pP_u\{u_{jl}\notin B_\es, \, j=0, \ldots, k-1\}\le (1-q)^k,\q u\in X,
\end{align*}
which allows to  conclude   that $\sup_{u\in X}\E_ue^{\De \tau_\es}$ is finite for some $\delta>0$. Choosing a smaller $\De$   and using the H\"older inequality, we obtain \ef{E:3.5}. 

\smallskip
{\it Step~2.}  Using \eqref{E:3.3} and the
     strong Markov property, we get for $u\in X$,
  $$
  \PPPP^V_k\mek (u)= \E_u\left\{\ch_{\{\tau\ge k\}} \Xi^V_k\mek \right\}+\E_u\left\{\ch_{\{\tau<k\}}  \Xi^V_k\mek\right\}\le \E_ue^{\De \tau_\es}+\E_u\left\{e^{\De \tau_\es}  \PPPP^V_k\mek (u_{\tau_\es})\right\}.
 $$
From \eqref{E:3.5} we derive    
\begin{equation}\label{E:3.6}
 \| \PPPP^V_k\mek\|_{X}\le 2+2 \|\PPPP^V_k \mek\|_{B_\es\cap X}.
 \end{equation}
   On the other hand, using inequality   $$
   |\PPPP_k^V f(v)-\PPPP_k^Vf(v')|  \le C \|f\|_L \, \|\PPPP_k^V{ \mek}\|_{\aaa(B)}\|v-v'\| 
   $$ 
   that follows from Proposition \ref{P:4.1}, with~$B= B_\rho$ and $f=\mek$, we get   $$
 \left|\PPPP_k^V\mek(u)-\PPPP_k^V\mek(0)  \right| \le \frac14 \|\PPPP_k^V\mek\|_{X}, \quad  u\in B_\es\cap X, \,k\ge1
 $$
 for  sufficiently small $\es>0$.
 Combining this with \eqref{E:3.6}, we infer
 $$
 \| \PPPP^V_k\mek\|_{X}\le 4+4\, \PPPP_k^V \mek(0)\le 4+4\|\PPPP^V_k\mek\|_\aaa.
$$
To conclude, it remains to recall that the sequence $\{\|\PPPP^V_k\mek\|_\aaa\}$   is bounded, by virtue of \ef{E:2.7}.
  \end{proof}

\subsection{The case of an arbitrary potential}
   
  As in the previous section,   to prove  Theorem \ref{T:1.3}, we need  to  check (iii) and~(iv) in the space $X=\aA(B_\rho)$. The arguments are more involved, since the oscillation of $V$ now  can be arbitrarily large.         
 
   \subsubsection{Condition (iii)}

  \begin{lemma}\label{L:3.4}
  Under   Conditions {\rm(A)-(E)},
  for any~$V\in L_b(H)$ and $r>0$, we have limit \eqref{E:3.1}.
  \end{lemma}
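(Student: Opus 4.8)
The goal is to prove Condition (iii) from Theorem~\ref{T:2.1} for the Feynman--Kac kernel $P_1^V$ on $X=\aA(B_\rho)$, with \emph{no} restriction on $\Osc(V)$, using the subcontraction hypothesis (E). Normalizing so that $\lm_V=1$, we must show $\|P_k^V(\cdot,X\setminus\aA_r)\|_X\to0$, i.e. $\sup_{u\in X}\E_u\{\ch_{\{u_k\notin\aA_r\}}\Xi_k^V\mek\}\to0$. The obstruction, compared with Lemma~\ref{L:3.2}, is that $e^{k\|V\|_\infty}$ may overwhelm the mixing rate $e^{-\alpha k}$ from \eqref{E:0.2}, so a naive bound fails. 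The plan is to exploit that under (E) the dynamics contracts trajectories toward $\aA$ in the weaker metric $d'$, so that after a bootstrap one can localize the mass near $\aA$ before paying the exponential cost of the potential.

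\textbf{Step 1: A contraction/coupling estimate near $\aA$.} First I would use Condition (E): for $u\in\aA(B_{\rho+1})$ and $v\in\aA$ its nearest point (in $d'$), the deterministic maps satisfy $d'(S(u),S(v))\le d'(u,v)$, and since the noise $\eta_k$ is common to both trajectories started from $u$ and $v$ (synchronous coupling), one gets $d'(u_k,v_k)\le d'(u,v)$ for all $k$, with $v_k\in\aA$ a.s. Because the topology of $d'$ is weaker than that of $H$ and $\aA$ is compact in $H$, for every $r>0$ there is $r'>0$ such that $d'(u,\aA)<r'$ forces $u\in\aA_r$ (the $r$-neighborhood in the $H$-metric, using compactness of $\aA$ to pass from the weak metric back). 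Combined with the absorbing property (B), the orbit of any $u\in X$ enters $B_{\rho+1}$ within a fixed time and then stays $d'$-close to $\aA$; hence there is a deterministic time $N=N(r)$ with $u_k\in\aA_r$ for all $k\ge N$, uniformly in $u\in X$, \emph{on the event that the coupling places $u_k$ in $B_{\rho+1}$}. Care is needed here: I expect the clean statement to be that a bootstrap is required, because $d'$-closeness to $\aA$ does not immediately give $d'$-smallness — one must iterate, using (E) to show the $d'$-distance to $\aA$ is non-increasing and hence eventually small once it starts small.

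\textbf{Step 2: Bootstrap to kill the exponential weight.} The heart of the argument is to avoid the factor $e^{k\|V\|_\infty}$. Write $k=Nl$ for integer $l$ and use the semigroup property of $P^V$: $P_k^V(u,\,\cdot\,)=\int P_N^V(u,dz_1)\cdots$. On each block of length $N$, Step~1 guarantees that the endpoint lies in $\aA_r$ up to a small-probability exceptional event; more precisely, I would show $\sup_{u\in X}P_N^V(u,X\setminus\aA_{r})\le \kappa$ for some $\kappa<1$ once $N$ is large, \emph{after} further shrinking via the contraction so that the bad event has small mass, not merely bounded mass. Then, since $P^V_N$ restricted to $\aA$ is just the Feynman--Kac kernel on the compact invariant set where \eqref{E:2.7} already gives convergence, iterating the decomposition $P_{Nl}^V = P_N^V\circ\cdots\circ P_N^V$ and peeling off one $\aA_r$-localization per block yields a geometric factor $\kappa^{l}\to0$, which absorbs the per-block weight $e^{N\|V\|_\infty}$ provided $N$ is chosen large enough that $\kappa\, e^{N\|V\|_\infty}$ is still controlled — here is the subtlety: $\kappa$ must be made small \emph{independently} of $N$ in the right order, so one first fixes how small $\kappa$ needs to be (depending on $\|V\|_\infty$), then chooses $N$ large using the contraction in $d'$ to achieve it.

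\textbf{Main obstacle and how I'd handle it.} The genuinely hard point is making the contraction in the weak metric $d'$ quantitative enough to beat $e^{k\|V\|_\infty}$: the subcontraction in (E) is only \emph{non-expansive}, not strictly contracting, so closeness to $\aA$ improves only because the noise has a density positive at $0$ (Condition (D)), which lets trajectories "fall into" $\aA$ with uniformly positive probability over a fixed horizon. I would therefore combine (E) with the uniform irreducibility-type estimate already proved in Lemma~\ref{L:3.1} (and the hitting-time bound from the proof of Lemma~\ref{L:3.3}): with probability $\ge q>0$ over $l$ steps the trajectory from any $u\in X$ reaches a small ball $B_\es$, hence lands $d'$-close to $\aA$, and then by (E) never leaves a small $d'$-neighborhood — so it remains in $\aA_r$. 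This gives $\sup_u P_{N}^V(u,X\setminus\aA_r)\le (1-q)\,e^{N\|V\|_\infty}$ per block is \emph{not} yet small, so I would instead track the \emph{probability under $\pP_u$} (not the weighted kernel) that the trajectory has entered and stayed near $\aA$, bound the weighted contribution of the complementary event by $e^{N\|V\|_\infty}(1-q)$ \emph{per independent attempt}, and take enough attempts $N/l\to\infty$ so that $(e^{\|V\|_\infty l}(1-q))^{N/l}\to0$ once $l$ is fixed large enough that $e^{\|V\|_\infty l}(1-q)<1$ — which is possible precisely because $q$ can be taken close to $1$ by enlarging $l$ via (E). This is where Conditions (D) and (E) must be used together, and writing this last balancing of constants carefully is the bulk of the work.
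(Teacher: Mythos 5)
Your Step 1 is sound and essentially reconstructs Remark \ref{R:3.7}: by the synchronous coupling with a trajectory in $\aA$, the non-expansiveness \eqref{E:1.7}, and the equivalence of $d$ and $d'$ on the compact set $\aA(B_{\rho+1})$, one finds $\es\in(0,r)$ with $\aA(\aA_\es)\subset\aA_r$, so that on $\{u_k\notin\aA_r\}$ the whole trajectory $u_0,\dots,u_k$ avoids $\aA_\es$. The gap is in your final balancing of constants. The hitting probability $q=q(l)$ of $B_\es$ (equivalently, of a neighborhood of $\aA$) within $l$ steps tends to $1$ only at the fixed exponential rate of the unperturbed system, $1-q(l)\gtrsim e^{-\al l}$ with $\al$ the mixing rate in \eqref{E:0.2}, and no choice of block length improves this rate. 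Hence your requirement $e^{\|V\|_\infty l}\,(1-q(l))<1$ amounts to $\|V\|_\infty\lesssim\al$, i.e.\ a smallness condition on the potential --- precisely the hypothesis of Lemma \ref{L:3.2} that Lemma \ref{L:3.4} is meant to remove. Any scheme that bounds the Feynman--Kac weight on the bad event crudely by $e^{k\|V\|_\infty}$ (in blocks or otherwise) while bounding the bad probability by the mixing estimate is doomed once $\Osc(V)>\al$.

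The idea you are missing is how the paper exploits the trajectory-wise avoidance of $\aA_\es$. Since $V$ is only evaluated outside $\aA_\es$ on the event $\{u_k\notin\aA_r\}$, one may replace it there by a Lipschitz potential $V_\es$ that coincides with $V$ off $\aA_\es$ but \emph{vanishes on} $\aA$, so that $\ch_{\{u_k\notin\aA_r\}}\Xi_k^V\mek=\ch_{\{u_k\notin\aA_r\}}\Xi_k^{V_\es}\mek$. The Cauchy--Schwarz inequality then gives
$$
\E_u\bigl\{\ch_{\{u_k\notin\aA_r\}}\Xi_k^{V_\es}\mek\bigr\}\le\bigl(\pP_u\{u_k\notin\aA_r\}\bigr)^{1/2}\bigl(\PPPP_k^{2V_\es}\mek(u)\bigr)^{1/2},
$$
and since $2V_\es$ vanishes on $\aA$ one has $\lm_{2V_\es}=1$, so Lemma \ref{L:3.5} (the bootstrap in $R$, which is where Condition (E) does the real work) yields $\sup_{k\ge1}\|\PPPP_k^{2V_\es}\mek\|_X<\infty$. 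The first factor decays like $e^{-\al k/2}$ by \eqref{E:0.2}. In other words, the exponential weight is absorbed by the sharp eigenvalue bound of Condition (iv) for a modified potential, not beaten by improving the probability estimate; without this step (or an equivalent substitute) your argument only reproves the small-oscillation case.
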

 \bp By Remark \ref{R:3.7}, there is   $\es\in(0, r)$   such that  the inclusion $\aaa(\aaa_\es)\subset\aaa_r$ holds. Note that this implies the following: if for some $k\ge 1$ and $\om\in\Omega$, $u_k(\om)\notin \aaa_r$, then $u_0(\om)\notin\aaa_\es, \ldots, u_{k-1}(\om)\notin\aaa_\es$, and since $\es<r$, we also have $u_k(\om)\notin\aaa_\es$. Let~$V_\es$ be a Lipschitz-continuous function that vanishes on $\aaa$ and coincides with~$V$ outside $\aaa_\es$. It follows that 
$$
\ch_{\{u_k\notin \aA_r\}}\Xi^V_k\mek=\ch_{\{u_k\notin \aA_r\}}\Xi^{V_\es}_k\mek.
$$
   Taking the expectation   and using the Cauchy--Schwartz inequality, we~get
  $$
\E_u\left\{\ch_{\{u_k\notin \aA_r\}}\Xi^V_k\mek\right\}=\E_u\left\{\ch_{\{u_k\notin \aA_r\}}\Xi^{V_\es}_k\mek \right\}\le \left(\E_u \ch_{\{u_k\notin \aA_r\}}\right)^{1/2}\left(\PPPP_k^{\mathbb{V}}\mek(u)\right)^{1/2},
  $$
where we set $\mathbb{V}=2V_\es$. Further, since the function $V_\es$ vanishes on $\aaa$, so does $\mathbb{V}$ and hence $\lm_{\mathbb{V}}=1$. In view of Lemma \ref{L:3.5},  
$$
 \|\PPPP_k^{\mathbb{V}}\mek\|_{X}\le \mM_\rho, \q k\ge 1.
$$
Let $f$ be a non-negative  Lipschitz-continuous function vanishing on $\aaa$ and  $1$ outside $\aaa_r$. Then, due to exponential mixing \eqref{E:0.2}, we have
$$
\sup_{u\in X}\E_u \ch_{\{u_k\notin \aA_r\}}\le \sup_{u\in X}\E_u f(u_k)\le C(r, \rho)e^{-\al k}, \q k\ge 1.
$$
Combining   last three inequalities, we arrive at \ef{E:3.1}.  
  
\ep

      \subsubsection{Condition (iv)}

\begin{lemma}
 \label{L:3.5}
 Under   Conditions {\rm(A)-(E)},
for any   $V\in L_b(H)$ and $R>0$, we have inequality  \eqref{E:3.4}.
\end{lemma}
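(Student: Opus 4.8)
The plan is to mimic the proof of Lemma~\ref{L:3.3}, but to replace the short‑excursion estimate — which there used the smallness of $\|V\|_\infty$ — by a coupling argument based on Condition~(E). As in that lemma, it suffices to treat $R=\rho$; replacing $V$ by $V-\log\lm_V$ (which changes neither $\Osc(V)$ nor the assertion) we may assume $\lm_V=1$, and we set $X=\aA(B_\rho)$, which is compact, invariant, and contained in $\aA(B_{\rho+1})$. On the compact invariant set $\aA$, Conditions (i) and (ii) of Theorem~\ref{T:2.1} hold for any bounded potential (by Proposition~\ref{P:4.1} and Lemma~\ref{L:3.1}), so \eqref{E:2.7} gives $\PPPP_k^V\mek\to h_V$ uniformly on $\aA$; in particular $A:=\sup_{k\ge1}\|\PPPP_k^V\mek\|_\aA<\infty$. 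The whole point is to propagate this bound from $\aA$ to $X$.

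Here is where (E) enters. Since the topology of $d'$ is weaker than the norm topology while $(X,\|\cdot\|)$ is compact, the two topologies — hence the two uniformities — coincide on $X$; thus on $X$ ``$d'$‑close'' means ``norm‑close'', and by \eqref{E:1.7} the quantity $d'(u_k,\hat u_k)$ is non‑increasing along any pair of trajectories that stay in $X$. Combining this non‑expansiveness with the squeezing estimate \eqref{E:1.4} and the non‑degeneracy (D) — the same ingredients that yield the exponential mixing \eqref{E:0.2} — one constructs, for every $u\in X$ and $\hat u\in\aA$, a coupling $(u_k,\hat u_k)$ of the chains issued from $u$ and $\hat u$, together with a stopping time $\kp$, such that $\hat u_k\in\aA$ for all $k$ (so that $\PPPP_i^V\mek(\hat u_k)\le A$ at every time), $\pP\{\kp>n\}\le C e^{-\beta n}$ for some $C,\beta>0$ independent of $u$, and $\sum_{n>\kp}d(u_n,\hat u_n)\le B$ for a finite constant $B$ depending only on the model. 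In particular $\bigl|\sum_{n=1}^{k}\bigl(V(u_n)-V(\hat u_n)\bigr)\bigr|\le 2\|V\|_\infty\,\kp+\|V\|_L B$ for all $k$.

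One then writes $\PPPP_k^V\mek(u)=\E_u\Xi_k^V\mek$ and inserts $\exp\bigl(\sum_{n\le k}V(u_n)\bigr)=\exp\bigl(\sum_{n\le k}V(\hat u_n)\bigr)\exp\bigl(\sum_{n\le k}(V(u_n)-V(\hat u_n))\bigr)$; conditioning on $\kp$, the first factor is controlled by the Feynman--Kac average along the chain in $\aA$ (hence by $A$), and the correction factor is controlled through $\kp$. The main obstacle is precisely the pre‑merge part: unlike in Lemma~\ref{L:3.3}, the correction can grow at the \emph{arbitrarily large} rate $\|V\|_\infty$ per step before the trajectories merge, so the crude bound $e^{2\|V\|_\infty\kp}$ need not be integrable against the (a priori potential‑independent) geometric tail of $\kp$. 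To overcome this one bootstraps: the pre‑merge weight is split off by the Cauchy--Schwarz inequality, at the cost of a Feynman--Kac factor for a doubled potential over a shorter horizon, and the estimate is run simultaneously for a suitable family of doubled potentials $\{2^mV\}$; the coupling's failure probabilities (governed by the projection parameter in \eqref{E:1.4}) are taken small relative to the sizes of these potentials, so that the induction over the levels closes by a geometric argument. Summing the resulting convergent series in $n=\kp$ over $n\le k$ then yields $\sup_{k\ge1}\|\PPPP_k^V\mek\|_X<\infty$, which is \eqref{E:3.4} for $R=\rho$; the general $R$ follows from the absorbing property~(B). The quantitative balancing of the weight's growth against the merging‑time tails through this bootstrap is the technically heaviest point of the proof.
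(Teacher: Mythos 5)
Your proposal does not follow the paper's route, and as written it has a genuine gap. The paper proves \eqref{E:3.4} by a bootstrap \emph{in the radius} $R$: setting $R_*=\sup\{R:\mM_R<\infty\}$ (with $\mM_0<\infty$ from \eqref{E:2.7}, since $\aA(B_0)=\aA$), it applies the refined uniform Feller inequality \eqref{E:4.1} with $f=\mek$ to two points $u\in\aaa(B_{R+\es})$, $v\in\aaa(B_R)$ and obtains the self-referential bound
\begin{equation*}
\|\PPPP_k^V\mek\|_{\aaa(B_{R+\es})}\le \mM_R+C_R\,\|\PPPP_k^V\mek\|_{\aaa(B_{R+\es})}\,d_H\bigl(\aaa(B_{R+\es}),\aaa(B_R)\bigr),
\end{equation*}
which is closed by absorbing the second term once $d_H(\aaa(B_{R+\es}),\aaa(B_R))\le \tfrac1{2C_R}$. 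Condition~(E) enters \emph{only} through Lemma~\ref{L:3.6}, i.e.\ the outer continuity $\bigcap_{\es>0}\aaa(B_{R+\es})=\aaa(B_R)$; no coupling of Feynman--Kac weighted trajectories is ever performed. Your plan instead couples a trajectory from $u\in X$ with one from $\hat u\in\aA$ and tries to compare the multiplicative functionals along the two paths.

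Two steps of your argument are not justified. First, the coupling you invoke --- with $\pP\{\kp>n\}\le Ce^{-\beta n}$ and $\sum_{n>\kp}d(u_n,\hat u_n)\le B$ --- is asserted rather than constructed: Condition~(E) only gives that $d'(u_n,\hat u_n)$ is \emph{non-increasing}, not that it tends to zero, and certainly not summability of the norm distances; moreover the coupling of Proposition~\ref{P:4.2} controls the failure probability by $C_N\gamma_N^{r-1}\|v-v'\|$, which is useful only for nearby initial points, whereas your $u$ and $\hat u$ may be far apart. Second, and decisively, your own identified obstacle --- that the pre-merge correction $e^{2\|V\|_\infty\kp}$ need not be integrable against the tail of $\kp$ --- is not overcome by the proposed fix. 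Each application of Cauchy--Schwarz replaces $V$ by $2V$, so the induction over the family $\{2^mV\}$ regresses to ever larger potentials instead of terminating; and the tail rate $\beta$ of the merging time cannot be made large relative to $\|V\|_\infty$ by tuning the projection parameter $N$ in \eqref{E:1.4}, since enlarging $N$ improves the squeezing rate $\gamma_N$ of the high modes but degrades the probability of coupling the low modes, and in any case the per-step coupling success probability is determined by the noise (Condition~(D)), not by $V$. This is precisely the difficulty that the paper's radius-bootstrap via \eqref{E:4.1} and Lemma~\ref{L:3.6} is designed to avoid, so the proposal as it stands does not constitute a proof.
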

\bp We shall use a bootstrap argument to establish this result.
Let  
$$
R_*=\sup\{R\ge0: \mM_R<\iin\}.
$$
The lemma will be proved if we show that
$R_*=\iin$.

\smallskip
{\it Step~1.} We first show that if $\mM_R$ is finite for some $R\ge0$, then so is $\mM_{R+\es}$ for some $\es\in(0,1)$. To this end, first note that in view of inequality \ef{E:4.1}, we have
 \begin{equation}\label{UFE}
|\PPPP_k^Vf(v)-\PPPP_k^Vf(v')|  \le C_R\, \|f\|_L \|\PPPP_k^V{ \mek}\|_{\aaa(B)}\|v-v'\|
\end{equation}
for any $B\subset B_{R+1}$ and $v, v'\in \aaa(B)$. Applying  inequality~\ef{UFE}  with    $f=\mek$ and $B= B_{R+\es}$, we get
\begin{align*}
|\PPPP_k^V\mek(u)|&\le |\PPPP_k^V\mek(v)|+ C_R\|\PPPP_k^V\mek\|_{\aaa(B_{R+\es})}\|u-v\|, \q u, v\in \aaa(B_{R+\es}).
\end{align*}
In particular, this inequality is true for any $v\in \aaa(B_{R})$ and  $u\in  \aaa(B_{R+\es})$. Therefore, taking first the infimum over $v\in\aaa(B_{R})$ and then supremum over $u\in\aaa(B_{R+\es})$, we derive
\begin{align} 
\|\PPPP_k^V\mek\|_{\aaa(B_{R+\es})}&\le \mM_{R}+C_R\|\PPPP_k^V\mek\|_{\aaa(B_{R+\es})}\sup_{u\in\aaa(B_{R+\es})}\inf_{v\in\aaa(B_{R)}}\|u-v\|\nonumber\\
&=  \mM_{R}+C_R\|\PPPP_k^V\mek\|_{\aaa(B_{R+\es})}\, d_H(\aaa(B_{R+\es}), \aaa(B_{R})),
\label{E:3.7}
\end{align}
 where $d_H(E, F)$ is the Hausdorff distance between the   sets $E,F\subset H$. We use the following result proved in the next section.
\begin{lemma}\label{L:3.6} 
For any $R\ge0$,  we have  $ d_H(\aaa(B_{R+\es}), \aaa(B_R))\to 0$ as $\es\downarrow 0$. Moreover, if $R>0$, we have also
  $ d_H(\aaa(B_{R-\es}),  \aaa(B_R))\to 0$ as $\es\downarrow 0$. 
\end{lemma}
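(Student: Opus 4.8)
\medskip
\noindent\textbf{Proof proposal for Lemma~\ref{L:3.6}.}
The plan is to compare a trajectory issued from the larger ball with a parallel trajectory driven by the same noise and issued from the smaller ball, and to bound their distance \emph{uniformly in time}. Fix $R\ge0$ and take $\es\in(0,1)$. Recall from the beginning of this section that $S$ is a compact map and $\KK$ is compact, so every set $\aaa(k,B)$ with $k\ge1$ and $B$ bounded is relatively compact and $\aaa(B)$ is compact and invariant for~\eqref{E:0.1}. Put $N_0=k_0(R+1)+1$ and $R_1=(R\vee\rho)+1$; by Condition~(B), any trajectory issued from $B_{R+1}$ lies in $B_\rho$ from time $k_0(R+1)$ on, hence, after one more step of the dynamics, lies in $\aaa(B_{\rho+1})$ from time $N_0$ on, where the subcontraction~(E) is at our disposal. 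Moreover, since the topology of $d'$ is weaker than the norm topology, the identity map of the $d$-compact set $\aaa(B_{R_1})$ into $(\aaa(B_{R_1}),d')$ is a continuous bijection onto a metric (hence Hausdorff) space, therefore a homeomorphism; consequently it is uniformly continuous in both directions, and there are nondecreasing functions $\omega_1,\omega_2:[0,\iin)\to[0,\iin)$ with $\omega_i(0+)=0$ such that $d'(x,y)\le\omega_1(\|x-y\|)$ and $\|x-y\|\le\omega_2(d'(x,y))$ for all $x,y\in\aaa(B_{R_1})$.

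\smallskip
Next I would take $u\in\aaa(k,B_{R+\es})$ for some $k\ge1$ and write $u=u_k$ along a trajectory $u_0=w_0\in B_{R+\es}$, $u_i=S(u_{i-1})+\zeta_i$ with $\zeta_i\in\KK$. Choosing $w_0'\in B_R$ with $\|w_0-w_0'\|\le\es$ (take $w_0'=w_0$ if $\|w_0\|\le R$, and $w_0'=(R/\|w_0\|)w_0$ otherwise), I let $u_i'$ be the parallel trajectory $u_0'=w_0'$, $u_i'=S(u_{i-1}')+\zeta_i$, so that $u_k'\in\aaa(k,B_R)\subseteq\aaa(B_R)$. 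The distance $\|u_k-u_k'\|$ is then estimated in two regimes. For $k\le N_0$, all $u_i,u_i'$ with $i\le N_0$ lie in a fixed ball $B_{R'}$ on which $S$ has a Lipschitz constant $L$, whence $\|u_k-u_k'\|\le L^{N_0}\es$. For $k>N_0$, translation invariance of $d'$ and Condition~(E) give $d'(u_{i+1},u_{i+1}')=d'\bigl(S(u_i),S(u_i')\bigr)\le d'(u_i,u_i')$ for every $i\ge N_0$, at which time $u_i,u_i'\in\aaa(B_{\rho+1})$; hence $d'(u_k,u_k')\le d'(u_{N_0},u_{N_0}')\le\omega_1(L^{N_0}\es)$ and $\|u_k-u_k'\|\le\omega_2\bigl(\omega_1(L^{N_0}\es)\bigr)$. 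Setting $\kappa(\es)=(L^{N_0}\es)\vee\omega_2\bigl(\omega_1(L^{N_0}\es)\bigr)$, which tends to $0$ as $\es\to0$, we obtain $\|u-u_k'\|\le\kappa(\es)$ for every $u\in\bigcup_{k\ge1}\aaa(k,B_{R+\es})$.

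\smallskip
To finish the first assertion, note that the closed $\kappa(\es)$-neighborhood of $\aaa(B_R)$ contains $\bigcup_{k\ge1}\aaa(k,B_{R+\es})$ and is closed, hence contains $\aaa(B_{R+\es})$; since also $\aaa(B_R)\subseteq\aaa(B_{R+\es})$, this gives $d_H(\aaa(B_{R+\es}),\aaa(B_R))\le\kappa(\es)\to0$. The ``moreover'' part is obtained by the same argument with the two balls exchanged: for $R>0$ and $\es<R$, given $u=u_k\in\aaa(k,B_R)$ issued from $w_0\in B_R$, pick $w_0'\in B_{R-\es}$ with $\|w_0-w_0'\|\le\es$, run the parallel trajectory, and the identical two-regime estimate yields $\|u-u_k'\|\le\kappa(\es)$ with $u_k'\in\aaa(B_{R-\es})$; since $\aaa(B_{R-\es})\subseteq\aaa(B_R)$, we conclude $d_H(\aaa(B_{R-\es}),\aaa(B_R))\to0$.

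\smallskip
The main obstacle is the infinite time horizon: local Lipschitzness of $S$ only yields $\|u_k-u_k'\|\le L^k\es$, useless as $k\to\iin$. The way around it is to switch, once both trajectories have entered the region where~(E) holds, to the weaker metric $d'$ in which $S$ is a non-strict contraction, using translation invariance to cancel the common noise, and then to transfer $d'$-smallness back to a norm bound by exploiting that on the \emph{compact} invariant set $\aaa(B_{R_1})$ the metrics $d$ and $d'$ are topologically --- hence uniformly --- equivalent. A secondary technical point is that the trajectories must genuinely reach $\aaa(B_{\rho+1})$, not merely the ball $B_\rho$, for~(E) to apply; this is why one takes $N_0=k_0(R+1)+1$ rather than $k_0(R+1)$.
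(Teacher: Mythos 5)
Your proof is correct and its engine is the same as the paper's: a parallel trajectory launched from a nearby initial point in the smaller ball and driven by the same kicks, the common noise cancelled by translation invariance of $d'$, the subcontraction (E) iterated to propagate $d'$-closeness, and compactness used to turn the $d'$-bound back into a norm bound. You package it differently. The paper proves the set identity $\aaa(B_R)=\bigcap_{\es>0}\aaa(B_{R+\es})$ (by trapping $\aaa(B_{R+\es})$ in a closed $d'$-neighbourhood $\ccc_r$ of $\aaa(B_R)$) and then reads off Hausdorff convergence from compactness of the nested family, whereas you go straight to an explicit modulus $\kappa(\es)$; this forces you to introduce the $d'$-to-$d$ modulus $\omega_2$, which the paper's route avoids. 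Conversely, the paper applies (E) from time $1$ and for that reduces --- stated as a WLOG and not elaborated --- to $R<\rho$; your two-regime split at $N_0=k_0(R+1)+1$ (finitely many Lipschitz steps until both trajectories have entered $\aaa(B_{\rho+1})$, then $d'$-subcontraction) handles every $R\ge0$ directly and makes that reduction unnecessary, a small but genuine cleanup. Finally, you prove the ``moreover'' assertion by the same two-regime estimate; the paper instead observes that it follows from a density-and-compactness argument without invoking (E) at all, so your treatment of that half uses more than is needed, though it does yield a matching quantitative rate.
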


In view of the first assertion of this lemma, 
$$
d_H(\aaa(B_{R+\es}), \aaa(B_{R}))\le \frac 1{2C_R}
$$for $\es>0$ sufficiently small.
  Combining this with   \eqref{E:3.7}, we get
$$
\|\PPPP_k^V\mek\|_{\aaa(B_{R+\es})}\le 2\,\mM_R, \q k\ge 1,
$$
which implies    $\mM_{R+\es}\le 2\,\mM_R<\infty$.

\smallskip
{\it Step~2.} In this step, we show that $R_*=\infty$. Note that for~$R=0$, we have~$\aA(B_R)=\aA$, so that $\mM_0$ is finite in view of  \eqref{E:2.7}. The result of the previous step implies that $R_*>0$ and
  if $R_*<\iin$, then it cannot be attained, i.e., $\mM_{R_*}=\iin$. In search of a contradiction, assume that $R_*<\iin$ and take any~$\es\in (0, R_*)$.  As above,   we apply   inequality \ef{UFE}  with $f=\mek$ and~$B=  B_{R_*} $:
$$
|\PPPP_k^V\mek(u)| \le |\PPPP_k^V\mek(v)| + C_{R_*}\|\PPPP_k^V\mek\|_{\aaa(B_{R_*})}\|u-v\|, \q u, v\in \aaa(B_{R_*}).
$$
Taking first the infimum over $v\in\aaa(B_{R_*-\es})$ and then the supremum over $u\in\aaa(B_{R_*})$, we obtain
$$
\|\PPPP_k^V\mek\|_{\aaa(B_{R_*})}\le   \mM_{R_*-\es}+C_{R_*}\|\PPPP_k^V\mek\|_{\aaa(B_{R_*})}d_H(\aaa(B_{R_*}),\aaa(B_{R_*-\es})).
$$
 Using the second assertion of  Lemma \ref{L:3.6}, for sufficiently small~$\es>0$, we get  
 $$
 d_H(\aaa(B_{R_*}),\aaa(B_{R_*-\es}))\le \frac 1{2C_{R_*}}.
 $$ We thus infer
$$
\|\PPPP_k^V\mek\|_{\aaa{(B_{R_*})}}\le 2\,\mM_{R_*-\es}, \q k\ge 1.
$$
This contradiction proves that $R_*=\infty$.
\ep

\subsubsection{Proof of Lemma   \ref{L:3.6}}
The second part of the lemma readily follows from the definition of $\aaa(B_R)$ and its compactness. The first one is more delicate, and this is where we use Condition (E). Clearly, it is sufficient to show that
\begin{equation}\label{E:3.8}
	 \aaa(B_R)= \bigcap_{\es>0} \aaa(B_{R+\es}) .
\end{equation}
The proof of this equality is divided into two steps.

\smallskip
{\it Step 1: Reduction.}  Without loss of generality, we can assume that $R$ is smaller than the number $\rho$ in (B).  
For any~$r>0$, we  introduce the set
$$
 \ccc_r=\{u\in Y:  d' (u,\aaa(B_R))\le r\},  
$$ where  $d'$ is the metric in   (E) and   $Y=\aaa(B_{\rho+1})$.
 We claim that \eqref{E:3.8} will be established if we show that for any $r>0$ there is $\es>0$ such that  
\be\label{E:3.9}
\aaa(m,  B_{R+\es})\subset \ccc_r \quad \text{for any $m\ge1$.}
\ee  
Indeed, once this is proved, we will have 
\be\label{E:3.10}
\bigcup_{m=1}^\iin \aaa(m,  B_{R+\es})\subset \ccc_r.
\ee
Now note that the set $\ccc_r$ is closed in $H$ with respect to the natural topology. Indeed, if the sequence $\{u_k\}\subset  \ccc_r$ converges to $u$ in $H$,  then applying the triangle inequality,  we obtain  
$$
 d'(u, \aaa(B_R))\le d'(u, u_k)+d'(u_k, \aaa(B_R))\le d'(u, u_k)+r.
$$
Letting $k$ go to infinity and using the fact that the convergence in $H$ implies the one in $d'$, we get $u\in \ccc_r$. Therefore, taking the closure in $H$ in the inclusion~\ef{E:3.10}, we see that 
$   \aaa(B_{R+\es})\subset \ccc_r$. Letting $r$ go to zero, we arrive at~\eqref{E:3.8}.

\smallskip
{\it Step 2: Derivation of \ef{E:3.9}}. Let us fix any $r>0$. First note that, since the
topology of $d'$ is weaker than the natural one of $H$,
   for any~$u\in Y$, there is~$a>0$ such that $d'(u, v)\le  r$, provided $\|u-v\|\le  a$. Using the compactness of~$Y$, we see  that $a$ can be taken uniformly for~$u\in Y$.  
 Let us show that \ef{E:3.9} holds for sufficiently small $\es>0$. Indeed, take any~$m\ge1$ and~$u_*\in\aaa(m,B_{R+\es})$. Clearly,~$u_*\in Y$ if $R+\es\le \rho+1$.  To show that~$d'(u_*, \aA(B_R))\le r$, note that  there are $u_0\in B_{R+\es}$ and~$\eta_1, \ldots, \eta_m\in \kkk$ verifying 
$$
u_1=S(u_0)+\eta_1, \ldots, u_m=S(u_{m-1})+\eta_m
$$with $u_*=u_m$.
Let us take any $v_0\in B_R$ such that $\|u_0-v_0\|\le \es$ and define 
$$
v_1=S(v_0)+\eta_1, \ldots, v_m=S(v_{m-1})+\eta_m. 
$$Using   the translation invariance of $d'$ and  inequality \ef{E:1.7}, we obtain
$$
d'(u_m, v_m)=d'(S(u_{m-1}),S(v_{m-1}))\le d'(u_{m-1},v_{m-1}).
$$
Iterating this and using $u_*=u_m$, we arrive at
$$
d'(u_*, v_m)\le d'(u_1,v_1).
$$But for sufficiently small $\es$ we have 
$$
\|u_1-v_1\|=\|S(u_0)-S(v_0)\|\le a.	
$$ Thus   $d'(u_1,v_1)\le r$, by definition of $a$.~This implies that~$d'(u_*,v_m)\le r$, and
to conclude, it remains to note that $v_i$ all belong to~$\aaa(B_R)$ by definition. 

\medskip
\begin{remark}\label{R:3.7} Literally repeating the argument of the proof of \eqref{E:3.8}, we get 
$$
\bigcap_{\es>0}\aaa(\aaa_\es)=\aaa.
$$
This implies that for any $r>0$, there is $\es>0$ such that 
 $\aaa(\aaa_\es)\subset\aaa_r$.
 
\end{remark}

 \section{Refined uniform Feller property}\label{S:4}
 This section is devoted to the proof of the following result.
 \begin{proposition}\label{P:4.1}
  Under Conditions {\rm(A)-(D)},
 for any  $V\in L_b(H)$,~$R>0$, and~$c\in(0,1)$, there is a number~$C=C(\|V\|_L,R,c)>0$     such~that
\begin{equation}
|\PPPP_k^Vf(v)-\PPPP_k^Vf(v')|  \le \left( C\, \|f\|_\iin +c^k \, \|f\|_L\right) \|\PPPP_k^V{ \mek}\|_{\aaa(B)}\|v-v'\|\label{E:4.1} 
\end{equation} 
for any  set $B\subset B_{R}$,  initial points~$v, v'\in \aaa(B)$,  and function  $f\in L_b(H)$.
\end{proposition}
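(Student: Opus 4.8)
The plan is to establish \eqref{E:4.1} by the coupling method, as in \cite{KS-cmp2000,JNPS-2012}, keeping careful track of constants so as to produce the explicit factor $c^k$ on the Lipschitz term and to carry the Feynman--Kac weight through the argument. As in the discussion at the beginning of Section~\ref{S:3} together with Condition~(B), the set $\aaa(B)$ is compact, invariant under $u\mapsto S(u)+\KK$, and contained in a ball $B_{R_1}$ with $R_1=R_1(R)$; hence both trajectories issued from points of $\aaa(B)$ stay in the fixed compact set $\aaa(B)\subset B_{R_1}$, on which $S$ is Lipschitz with a constant $L=L(R)$. It suffices to treat $\|v-v'\|\le\delta_0$ for a fixed $\delta_0>0$, since otherwise \eqref{E:4.1} is trivial with $C=2/\delta_0$. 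Given $c\in(0,1)$, Condition~(C) lets us fix $N$ with $\gamma_N(R_1)\le c$. On an auxiliary space I would then build a coupling $(u_k,u_k')$ of the trajectories started from $v$ and $v'$ in the usual way: the high modes $(I-{\mathsf P}_N)\eta_k$ of the two driving noises are taken equal, and, as long as ${\mathsf P}_Nu_{k-1}={\mathsf P}_Nu_{k-1}'$, the low modes are coupled by a maximal coupling of the laws of ${\mathsf P}_Nu_k$ and ${\mathsf P}_Nu_k'$; let $\tau$ be the first step at which this maximal coupling misses. The two marginals of the coupled process are then $\pP_v$ and $\pP_{v'}$.

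Two basic estimates feed the argument. On $\{\tau>n\}$ the low modes agree at all steps $1,\dots,n$, so Condition~(C) gives $\|u_n-u_n'\|\le\gamma_N(R_1)^n\|v-v'\|\le c^n\|v-v'\|$. By Condition~(D), the law of ${\mathsf P}_N\eta_1$ has a $C^1$ density with compact support, hence its total variation distance to its translate by a vector $a$ is $\le C(N)\|a\|$; applied with $a={\mathsf P}_N(S(u_{n-1})-S(u_{n-1}'))$, whose norm is $\le Lc^{n-1}\|v-v'\|$ on $\{\tau\ge n\}$, this bounds the conditional miss probability at step $n$ by $C_1c^{n-1}\|v-v'\|$, so that $\widehat\pP(\tau\le k)\le C_2\|v-v'\|$ with $C_2=C_2(R,c)$.

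To assemble the estimate, write $\PPPP_k^Vf(v)-\PPPP_k^Vf(v')=\widehat\E[\Xi_k^Vf(u)-\Xi_k^Vf(u')]$ and split over $\{\tau>k\}$ and $\{\tau\le k\}$, where $\widehat\E$ is the coupling expectation. On $\{\tau>k\}$ I would use $\Xi_k^Vf(u)-\Xi_k^Vf(u')=(f(u_k)-f(u_k'))W_k+f(u_k')(W_k-W_k')$ with $W_k=\exp(\sum_{n=1}^kV(u_n))$, the bounds $|f(u_k)-f(u_k')|\le c^k\|f\|_L\|v-v'\|$ and $|W_k-W_k'|\le C\|v-v'\|\,W_k'$ (the latter because $|\sum_{n=1}^k(V(u_n)-V(u_n'))|\le\|V\|_L\sum_n c^n\|v-v'\|$ and $\|v-v'\|\le\delta_0$), together with $\widehat\E[W_k\ch_{\{\tau>k\}}]\le\PPPP_k^V\mek(v)\le\|\PPPP_k^V\mek\|_{\aaa(B)}$ and likewise $\widehat\E[W_k'\ch_{\{\tau>k\}}]\le\|\PPPP_k^V\mek\|_{\aaa(B)}$; this produces the $c^k\|f\|_L$ term and part of the $C\|f\|_\infty$ term of \eqref{E:4.1}. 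On $\{\tau\le k\}$, the strong Markov property at $\tau$ gives $\widehat\E[\ch_{\{\tau\le k\}}\Xi_k^Vf(u)]=\widehat\E[\ch_{\{\tau\le k\}}W_\tau\PPPP_{k-\tau}^Vf(u_\tau)]$, and $|\PPPP_{k-\tau}^Vf(u_\tau)|\le\|f\|_\infty\PPPP_{k-\tau}^V\mek(u_\tau)$ (with $u_\tau\in\aaa(B)$), so this term and its $u'$-analogue are bounded by $\|f\|_\infty\,\widehat\E[\ch_{\{\tau\le k\}}W_\tau\PPPP_{k-\tau}^V\mek(u_\tau)]=\|f\|_\infty\,\widehat\E[\ch_{\{\tau\le k\}}W_k]$, again by the strong Markov property. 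Everything therefore reduces to the inequality
\[\widehat\E\bigl[\ch_{\{\tau\le k\}}W_k\bigr]\le C\,\|v-v'\|\,\|\PPPP_k^V\mek\|_{\aaa(B)},\]
that is, to the statement that under the normalised Feynman--Kac path measure the coupling misses by time $k$ with probability $O(\|v-v'\|)$.

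This last bound is the point I expect to be the main obstacle, and it is the only genuinely new difficulty compared with the case $V=0$ (for which $W_k\equiv1$ and $\widehat\pP(\tau\le k)\le C_2\|v-v'\|$ already does the job). I would prove it by decomposing over $\{\tau=j\}$ and conditioning on $\mathcal F_{j-1}$: the step-$j$ miss contributes the factor $c^{j-1}\|v-v'\|$, while recombining $W_{j-1}$, the conditional tail weight from $u_j$, and $\|\PPPP_k^V\mek\|_{\aaa(B)}$ forces one to use a uniform-in-$m$ comparison of the form $\sup_{\aaa(B)}\PPPP_m^V\mek\le C_H\inf_{\aaa(B)}\PPPP_m^V\mek$ with $C_H=C_H(\|V\|_L,R)$ independent of $m$. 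Such a Harnack-type bound is trivial for $V=0$ and, for general bounded $V$, is essentially the uniform positivity of $\lambda_V^{-m}\PPPP_m^V\mek$ underlying the multiplicative ergodicity on $\aaa$ from \cite{JNPS-2012}; it follows from the uniform irreducibility (Lemma~\ref{L:3.1}) and the dissipative structure of the system, and this is the step that requires real care. Granting it, the geometric series $\sum_j c^{j-1}$ converges, \eqref{E:4.1} follows, and tracking the dependence of $N$, $L$, $C_1$, $C_H$ and $\|V\|_\infty\le\|V\|_L$ on the data shows that the resulting constant is $C=C(\|V\|_L,R,c)$.
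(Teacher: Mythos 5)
Your coupling architecture, the decomposition over the first low-mode miss time $\tau$, and the squeezing estimate on $\{\tau>k\}$ all coincide with the paper's proof (which you will find carried out via Proposition~\ref{P:4.2} and the decomposition $\PPPP_k^Vf(v)-\PPPP_k^Vf(v')=\sum_{r=1}^kI_k^r+\tilde I_k$). You have also correctly located the crux: one must compare $\PPPP_{k-r}^V\mek$ against $\|\PPPP_k^V\mek\|_{\aaa(B)}$ uniformly in $r$.

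However, the tool you propose for this --- a Harnack-type bound $\sup_{\aaa(B)}\PPPP_m^V\mek\le C_H\inf_{\aaa(B)}\PPPP_m^V\mek$ with $C_H$ independent of $m$ --- is a genuine gap, not just an exercise. You do not prove it, and any attempt to prove it at this stage is essentially circular: such a uniform-in-$m$ comparison between values of $\PPPP_m^V\mek$ across a compact set is precisely what the refined uniform Feller property you are trying to prove (together with irreducibility) ultimately delivers, and it is, in disguise, the content of the multiplicative ergodic theorem on $\aaa$. Lemma~\ref{L:3.1} alone gives $P_{m_0}^V(u,B_r(\hat u))\ge pe^{-m_0\|V\|_\infty}$, but converting that into a Harnack bound for $\PPPP_m^V\mek$ requires an a priori uniform modulus of continuity for $\PPPP_{m-m_0}^V\mek$, i.e.\ exactly Proposition~\ref{P:4.1}.

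The paper sidesteps this with a one-line normalisation. Since replacing $V$ by $V-\inf_{\aaa(B)}V$ multiplies both $\PPPP_k^Vf$ and $\PPPP_k^V\mek$ by the same scalar $e^{-k\inf V}$, one may assume $V\ge0$ on $\aaa(B)$ (and $f\ge0$), whereupon $m\mapsto\PPPP_m^V\mek(u)$ is non-decreasing. Conditioning on $\FF_r^N$ on the event $\{\tau=r\}$ then gives directly
$$
\widehat\E\bigl[\ch_{\{\tau=r\}}\Xi_k^Vf(u_k)\bigr]\le\|f\|_\infty\,e^{r\|V\|_\infty}\,\widehat\E\bigl[\ch_{\{\tau=r\}}\PPPP_{k-r}^V\mek(u_r)\bigr]\le\|f\|_\infty\,e^{r\|V\|_\infty}\,\|\PPPP_k^V\mek\|_{\aaa(B)}\,\widehat\pP(\tau=r),
$$
with no Harnack constant. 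Combined with $\widehat\pP(\tau=r)\le C_N\gamma_N^{r-1}\|v-v'\|$, the resulting series $\sum_r\gamma_N^{r-1}e^{r\|V\|_\infty}$ converges once $N$ is chosen so large that $\gamma_N e^{\|V\|_\infty}<1$; note that $N$ must beat $e^{\|V\|_\infty}$, not merely satisfy $\gamma_N\le c$ as in your choice, and this is exactly where the dependence $C=C(\|V\|_L,R,c)$ arises. Replacing your Harnack step by this monotonicity observation, the remainder of your argument (the $c^k\|f\|_L$ term from squeezing, the treatment of $|W_k-W_k'|$ via the Lipschitz bound on $V$) matches the paper's Steps 2--4.
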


We prove this proposition by developing
  the ideas of the proof of the uniform Feller property on $\aA$ established in Theorem~3.1 in \cite{JNPS-2012}.
We start by recalling the properties of the coupling process.~Let~$\PPP(v)$ be the law of the trajectory~$\{u_k\}$ for~\eqref{E:0.1} issued from~$v\in\aA(B)$, i.e., $\PPP(v)$ is a probability measure on the direct product  of countably many copies of~$\aA(B)$.  
  The following result is a version of Proposition~3.2 in \cite{JNPS-2012}; see   Section~3.2.2 in~\cite{KS-book} for~the~proof. 
\begin{proposition} \label{P:4.2} For   sufficiently large integer $N\ge1$ there is a probability space $(\Omega_N,\FF_N,\IP_N)$  and an $\aA(B)\times\aA(B)$-valued Markov process $(u_k,u_k')$ on~$\Omega_N$ parametrised by the initial   point $(v,v')\in \aA(B)\times\aA(B)$ for which the following properties hold. 
\begin{itemize}
\item[\bf(a)] 
The $\IP_N$-laws of the sequences $\{u_k\}$ and~$\{u_k'\}$ coincide with~$\PPP(v)$ and~$\PPP(v')$, respectively.
\item[\bf(b)]  The projections ${\mathsf Q}_N (u_k-S(u_{k-1}))$ and ${\mathsf Q}_N (u_k'-S(u_{k-1}'))$ coincide for all~$\omega\in \Omega_N$.
\item[\bf(c)] There is a   number $C_N>0$ such that
for any integer $r\ge1$, we~have\,\footnote{The relation ${\mathsf P}_Nu_k={\mathsf P}_Nu_k'$ in \eqref{E:4.2} should be omitted for $r=1$.} 
\begin{equation}
\IP_N\bigl\{{\mathsf P}_Nu_k={\mathsf P}_Nu_k' \mbox{ for $1\le k\le r-1$}, {\mathsf P}_Nu_{r}\ne{\mathsf P}_Nu_{r}'\bigr\}
\le C_N\gamma_N^{r-1}\|v-v'\|,\label{E:4.2}
\end{equation}
where $ \gamma_N$ is the number in Condition {\rm (C)},  ${\mathsf P}_N$ is the orthogonal projection onto       $\textup{span}\{e_1,\dots,e_N\}$ in $H$ and ${\mathsf Q}_N=1- {\mathsf P}_N$.

 \end{itemize}
\end{proposition}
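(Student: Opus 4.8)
The plan is to carry out the classical construction of a coupling that identifies the low modes of the two trajectories by a maximal coupling; I will follow Section~3.2.2 of~\cite{KS-book} and Proposition~3.2 of~\cite{JNPS-2012}, so only the main points are spelled out. First I would fix a radius $R_0$ with $\aA(B)\subset B_{R_0}$ — possible since $\aA(B)$ is compact — let $L_{R_0}$ be a Lipschitz constant for $S$ on $B_{R_0}$ (which exists by~(A)), and write $\gamma_N:=\gamma_N(R_0)$ for the constant of Condition~(C), with $N$ taken large (so that, in particular, $\gamma_N<1$). By Condition~(D), under the orthogonal splitting $H={\mathsf P}_NH\oplus{\mathsf Q}_NH$ the law of $\eta_1$ is a product $\lambda_N\otimes\lambda_N^{\perp}$, where $\lambda_N$ is supported on the cube $\prod_{j=1}^N[-b_j,b_j]\subset{\mathsf P}_NH$ and has the $C^1$ density $\rho_N(x)=\prod_{j=1}^N b_j^{-1}p_j(x_j/b_j)$. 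A one-dimensional estimate for each $p_j$ (using $p_j\in C^1$ with compact support) combined with a telescoping inequality for products then produces a constant $\kappa_N>0$ with
\[
\|\rho_N-\rho_N(\cdot+a)\|_{L^1}\le\kappa_N\,\|a\|\qquad\text{for all }a\in{\mathsf P}_NH;
\]
this is the only step where the differentiability of the $p_j$ enters.

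Next I would define $(u_k,u_k')$ as the Markov chain on $\aA(B)\times\aA(B)$ whose one-step transition from $(w,w')$ is the following: sample a common tail $\zeta^{\perp}$ with law $\lambda_N^{\perp}$ and, on an independent factor, a pair $(\zeta,\zeta')$ from a maximal coupling of $\lambda_N$ with its translate by $a:={\mathsf P}_N(S(w')-S(w))$ — that is, $\zeta$ and $\zeta'$ each have law $\lambda_N$ and the coupling gives to the event $\{\zeta'=\zeta-a\}$ the mass $\int_{{\mathsf P}_NH}\bigl(\rho_N(x)\wedge\rho_N(x+a)\bigr)\dd x=1-\tfrac12\|\rho_N-\rho_N(\cdot+a)\|_{L^1}$ — and then move to $\bigl(S(w)+\zeta^{\perp}+\zeta,\ S(w')+\zeta^{\perp}+\zeta'\bigr)$. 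Such a maximal coupling exists and can be chosen measurable in $a$, so this is a genuine Markov kernel; iterating it with fresh independent randomness (Ionescu--Tulcea) gives $(\Omega_N,\FF_N,\IP_N)$ and the process. Since $\zeta^{\perp}$ is independent of $(\zeta,\zeta')$ and the $\lambda_N$-marginals of the maximal coupling do not depend on the shift $a$, the conditional law of $\eta_k:=\zeta^{\perp}+\zeta$ given the past equals the law of $\eta_1$, irrespective of the realised shift, and likewise for $\eta_k':=\zeta^{\perp}+\zeta'$; hence $\{u_k\}$ and $\{u_k'\}$ are Markov chains governed by~\eqref{E:0.1} started at $v$ and $v'$, and they stay in $\aA(B)$ by its invariance. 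This gives assertion~(a), and assertion~(b) is immediate from the construction: ${\mathsf Q}_N\bigl(u_k-S(u_{k-1})\bigr)={\mathsf Q}_N\eta_k=\zeta^{\perp}={\mathsf Q}_N\eta_k'={\mathsf Q}_N\bigl(u_k'-S(u_{k-1}')\bigr)$ for every $\omega$.

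For assertion~(c) I would write $E_{r-1}=\{{\mathsf P}_Nu_i={\mathsf P}_Nu_i'\ \text{for}\ 1\le i\le r-1\}$, with $E_0=\Omega_N$, and argue in two steps. On $E_{r-1}$ one has $u_i-u_i'={\mathsf Q}_N(u_i-u_i')$ for $1\le i\le r-1$, so combining~(b) with Condition~(C) gives $\|u_i-u_i'\|\le\gamma_N\|u_{i-1}-u_{i-1}'\|$, whence by induction from $\|u_0-u_0'\|=\|v-v'\|$,
\[
\|u_{r-1}-u_{r-1}'\|\le\gamma_N^{\,r-1}\|v-v'\|\qquad\text{on }E_{r-1}.
\]
Conditionally on the past, the $r$-th step fails to force ${\mathsf P}_Nu_r={\mathsf P}_Nu_r'$ exactly when $\zeta'\ne\zeta-a_r$ with $a_r={\mathsf P}_N(S(u_{r-1}')-S(u_{r-1}))$, an event of probability at most $\tfrac12\|\rho_N-\rho_N(\cdot+a_r)\|_{L^1}\le\tfrac12\kappa_N\|a_r\|\le\tfrac12\kappa_N L_{R_0}\|u_{r-1}-u_{r-1}'\|$. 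Taking expectations over $E_{r-1}$ and inserting the previous bound yields~\eqref{E:4.2} with $C_N=\tfrac12\kappa_N L_{R_0}$ (for $r=1$ the set $E_0$ imposes no constraint and $\|u_0-u_0'\|=\|v-v'\|$, in accordance with the footnote). I expect the only genuinely non-routine points to be the measurable dependence of the maximal coupling on the shift $a$ (needed so that the one-step transition is a Markov kernel, and hence for~(a)) and the $L^1$-Lipschitz bound on $\rho_N$; the remainder is bookkeeping, which is why~\cite{KS-book} is invoked for the details.
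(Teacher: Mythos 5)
Your construction is the standard coupling argument from Section~3.2.2 of \cite{KS-book} (and Proposition~3.2 of \cite{JNPS-2012}), which is precisely what the paper invokes for this statement without reproducing the proof: a maximal coupling of the projected noise laws shifted by ${\mathsf P}_N(S(w')-S(w))$, a common high-mode component, the $L^1$-Lipschitz translation bound for the $C^1$ densities, and the squeezing via Condition~(C) on the event $E_{r-1}$. The argument is correct and the bookkeeping (marginals independent of the shift, measurability in $a$, the identification $\{{\mathsf P}_Nu_r\ne{\mathsf P}_Nu_r'\}=\{\zeta'\ne\zeta-a_r\}$) all checks out.
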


\begin{proof}[Proof of Proposition~\ref{P:4.1}]
  Without loss of generality, we can assume  that $f $ and~$V$ are non-negative functions on $ \aA(B)$.

 \smallskip
 
 Let us fix an initial point $(v,v')\in\aA(B)\times \aA(B)$ such that~ $\varkappa:=\|v-v'\|\le 1$, a sufficiently large integer  $N\ge1$, and  apply Proposition~\ref{P:4.2}.~Let $(u_k,u_k')$ be the corresponding sequence. We denote by~$A(r)$ the event on the left-hand side of~\eqref{E:4.2}, and 
$$\tilde A(r)=\bigl\{{\mathsf P}_Nu_k={\mathsf P}_Nu_k'\mbox{ for   $1\le k\le r$} \bigr\}.
$$ Then we have 
\begin{equation} \label{E:4.3}
\PPPP_k^Vf(v)-\PPPP_k^Vf(v')=\sum_{r=1}^kI_k^r+ \tilde I_k,
\end{equation}
where   
\begin{align*}  
I_k^r&=\E_N \bigl\{\ch_{A(r)}
\bigl(\Xi^V_kf(u_k)-\Xi^V_kf(u_k')\bigr)\bigr\},\\
\tilde I_k&=\E_N \bigl\{\ch_{\tilde A(k)}
\bigl(\Xi^V_kf(u_k)-\Xi^V_kf(u_k')\bigr)\bigr\},
\end{align*}
and~$\E_N$ is the expectation corresponding to~$\IP_N$.

 \smallskip
{\it Step~1. Estimate for $I_k^r$.}
Let us show that
\begin{equation}
| I^r_{k}|\le C_{N}\|f\|_\ty \gamma_N^{r-1}e^{r\|V\|_\infty} \|\PPPP_{k}^V\mek\|_{\aA(B)}\,\varkappa,  \q\text{   $1\le r\le k$}.\label{E:4.4}
\end{equation}   
Indeed,  let~$\FF_k^N$ be the filtration generated by~$(u_k,u_k')$. Taking the conditional expectation given~$\FF_r^N$, using the fact that  $f $ and $V$ are non-negative, and carrying out some simple estimates, we~derive
\begin{align*} 
I_k^r
\le\E_N\bigl\{\ch_{A(r)}\Xi^V_kf(u_k)\bigr\}&\le\|f\|_\ty e^{r\|V\|_\infty}
\E_N\bigl(\ch_{A(r)} \PPPP_{k-r}^V{\mek} (u_r)\bigr)\notag\\
&\le \|f\|_\ty e^{r\|V\|_\infty}\|\PPPP_{k}^V\mek\|_{\aA(B)}\IP_N\bigl\{A(r)\bigr\}.
\end{align*}
Using~\eqref{E:4.2}, we obtain \eqref{E:4.4}.

\smallskip
{\it Step~2. Squeezing.} Before estimating $\tilde I_k$, let us show the following squeezing property   on the event $\tilde A(k)$:
\begin{equation}\label{E:4.5}
\|u_{r}-u_{r}'\|
\le  \gamma_N^r\, \varkappa, \quad 1\le r\le k.
\end{equation} Indeed, 
using property (b) in Proposition \ref{P:4.2}, we get   
$$
\|u_{r}-u_{r}'\|= \|{\mathsf Q}_N(u_{r}-u_{r}')\|=\|{\mathsf Q}_N(S(u_{r-1})-S(u_{r-1}'))\| \le \gamma_N\, \|u_{r-1}-u_{r-1}'\|.
$$Iterating this, we arrive at the required result.
 
 \smallskip
{\it Step~3.~Estimate for $\tilde I_k$.}
Let us show that  
\begin{equation}\label{E:4.6}
|\tilde I_{k}|\le C_1 \, (\gamma_N \,\| f\|_{\infty}+\gamma_N^k \,\| f\|_{L}) \, \|\PPPP_k^V{\mek}\|_{\aA(B)} \, \varkappa 
\end{equation} for some number $C_1=C_1(\|V\|_L)>0$ not depending on $N$.
Indeed, 
\begin{align} \label{E:4.7}
\tilde I_k
&=\E\bigl\{\ch_{\tilde A(k)} \Xi^V_k \mek (u_k)[ f (u_k)-f (u_k')] \bigr\}\nonumber\\&\quad+\E\bigl\{\ch_{\tilde A(k)}[ \Xi^V_k \mek (u_k)- \Xi^V_k \mek  (u_k')]f (u_k') \bigr\}=: J_{1,k}+J_{2,k}.
\end{align}     We derive from~\eqref{E:4.5},
$$
|J_{1,k}|\le   \E\bigl\{\ch_{\tilde A(k)}  \Xi^V_k{\mek} (u_k) | f (u_k)-f (u_k')| \bigr\} \le \gamma_N^k \, \| f\|_{L}     \|\PPPP_k^V{1}\|_{\aA(B)}\,\varkappa.
$$ Similarly, as $V\in L_b(H)$,
\begin{align*}
|J_{2,k}|&\le \|f\|_\ty \E\bigl\{\ch_{\tilde A(k)}| \Xi^V_k \mek(u_k)- \Xi^V_k{\mek}(u_k')| \bigr\}\\&\le \|f\|_\ty  \E\left\{\ch_{\tilde A(k)}\Xi^V_k\mek(u_k)\left[ \exp\left(\sum_{n=1}^k|V(u_n)-V(u_n')| \right)-1 \right] \right\}\\&\le \|f\|_\ty    \left[ \exp\left(  \varkappa  \gamma_N \,(1-\gamma_N)^{-1} \|V\|_{L}   \right)-1 \right] \|\PPPP_k^V{\mek}\|_{\aA(B)}.
\end{align*}
Combining the   estimates for $J_{1,k}$ and $J_{2,k}$ with \eqref{E:4.7}, we obtain \eqref{E:4.6}.

\smallskip
{\it Step~4.}
Substituting \eqref{E:4.4} and  \eqref{E:4.6}    into~\eqref{E:4.3}, we derive 
\begin{align*}
&|\PPPP_k^Vf(v)-\PPPP_k^Vf(v')|\\
&\q\q\q\q\le \left(  \tilde C_N\, \|f\|_\ty\sum_{r=1}^k  \gamma_N^{r-1}e^{r\|V\|_\infty}
+ C_1 \gamma_N^k \, \|f\|_L \right)   \|\PPPP_k^V{\mek}\|_{\aA(B)} \, \varkappa\\
&\q\q\q\q\le\left( C\, \|f\|_\iin +c^k \, \|f\|_L\right) \|\PPPP_k^V{ \mek}\|_{\aaa(B)}\,\varkappa 
\end{align*}
 for sufficiently large $N$.
\end{proof}

\section{Applications}\label{S:5}
 In this section, we present various   corollaries of Theorems \ref{T:1.1}-\ref{T:1.3}.

\subsection{Existence and analyticity of  the pressure function} 
We start with the existence of the pressure function. 
\begin{proposition}\label{P:5.3}
Assume that Conditions {\rm(A)-(D)} are fulfilled. Then the following limit  (called pressure function) exists
\be\label{E:5.9}
Q(V, u)=\lim_{k\to\iin}\f{1}{k}\log  \PPPP^V_k \mek (u)
\ee
 for any $V\in C (H)$ and $u\in H$. Moreover, this limit does not depend on~$u$  if we have one of the following properties:  
\begin{enumerate}
\item[\bf(1)]	The initial condition $u$ belongs to $\aA$.
\item[\bf(2)]  $\Osc(V)\le \delta$, where $\delta$ is the number in Theorem \ref{T:1.2}.
 \item[\bf(3)] Condition {\rm(E)} is satisfied.
 \end{enumerate} 
 The limit in \eqref{E:5.9} is denoted by $Q(V)$  if one of the properties {\rm(1)-(3)} is~satisfied.
\end{proposition}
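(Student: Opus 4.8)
The plan is to obtain the existence of the limit \eqref{E:5.9} and its independence of $u$ by reducing, via a standard truncation and approximation argument, to the situation where the preceding theorems of the paper apply directly. First I would note that $\PPPP_k^V\mek(u)=\e_u\{\exp(\sum_{n=1}^k V(u_n))\}$ is finite and positive for every $k\ge1$ since $V$ is continuous and the trajectory lives in a compact absorbing set after one step; more precisely, since $u_k\in\aA(B_{\|u\|\vee\rho})$ for $k\ge1$ (by the absorbing property (B) and compactness of the domains of attainability), the potential $V$ is bounded along the trajectory, uniformly in a fixed starting ball. This lets me replace $V\in C(H)$ by its restriction to a large compact set, which is bounded and can be approximated uniformly on that compact set by Lipschitz functions; I would therefore first prove the statement for $V\in L_b(H)$ (or $V\in L_b(\aA)$ in case (1)) and then pass to the general continuous case.

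For the Lipschitz case the core of the argument is immediate from Theorems \ref{T:1.1}--\ref{T:1.3}. Under hypothesis (1) apply Theorem \ref{T:1.1}, under (2) apply Theorem \ref{T:1.2}, and under (3) apply Theorem \ref{T:1.3}: in each case \eqref{E:1.6} gives
$$
\lambda_V^{-k}\PPPP_k^V f(u)\to \lag f,\mu_V\rag h_V(u)
$$
with exponential rate, for $f$ ranging over the unit ball of $L_b$. Taking $f=\mek$, and using that $h_V(u)>0$, we get $\PPPP_k^V\mek(u)=\lambda_V^k\,(h_V(u)+o(1))$, hence $\frac1k\log\PPPP_k^V\mek(u)\to\log\lambda_V$, a limit independent of $u$. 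This simultaneously establishes existence and $u$-independence in cases (1)--(3) for Lipschitz $V$. For the mere existence of \eqref{E:5.9} (without $u$-independence) for a general $u\in H$ and general $V\in C(H)$, I would instead use subadditivity: a routine computation with the Markov property and the absorbing property shows that, after reducing to the compact invariant absorbing set $X=\aA(B_\rho)$, the quantities $a_k=\log\sup_{u\in X}\PPPP_k^V\mek(u)$ are subadditive up to a bounded error (the $\eta_k$ are bounded, $V$ is bounded on $X$), and a corresponding lower bound follows since $V$ is continuous on the compact $X$; Fekete's lemma then yields the limit. The value of this limit then coincides with $\log\lambda_V$ whenever $\lambda_V$ is defined, i.e.\ in cases (1)--(3).

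The remaining point is the passage from Lipschitz $V$ to continuous $V$. Given $V\in C(H)$, restrict to $X=\aA(B_\rho)$ and choose Lipschitz $V_j$ with $\|V-V_j\|_X\to0$. Then for starting points whose trajectory enters $X$ (all of them, after one step, possibly enlarging $X$ to $\aA(B_{\|u\|\vee\rho})$) we have
$$
\Bigl|\tfrac1k\log\PPPP_k^V\mek(u)-\tfrac1k\log\PPPP_k^{V_j}\mek(u)\Bigr|\le \|V-V_j\|_X+\tfrac{\|V\|_\infty\vee\|V_j\|_\infty}{k},
$$
so the family $\{\frac1k\log\PPPP_k^V\mek(u)\}_k$ is a uniform (in $k$) limit of families each of which converges; hence it converges, and in cases (1)--(3) its limit is $\lim_j\log\lambda_{V_j}$, independent of $u$. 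I expect the main obstacle to be purely bookkeeping: making the reduction to the compact phase space clean when $u\notin X$ (handling the first step $u\mapsto u_1$, and the dependence of the enlarged absorbing set on $\|u\|$), and checking that the error terms in the subadditivity/approximation estimates are genuinely $o(k)$ uniformly. None of these steps is deep; the substance is entirely carried by Theorems \ref{T:1.1}--\ref{T:1.3}.
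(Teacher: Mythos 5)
Your handling of cases (1)--(3) for Lipschitz $V$ (take $f=\mek$ in \eqref{E:1.6} and use $h_V>0$ to get $Q(V,u)=\log\lambda_V$) and your final approximation step from $L_b(H)$ to $C(H)$ coincide with the paper's argument. The gap is in the existence of the limit for a general $u\in H$ without any of (1)--(3), which is a genuine part of the statement. Fekete's lemma applied to $a_k=\log\sup_v\PPPP_k^V\mek(v)$, with the supremum over an invariant compact set such as $\aA(\{u\})$, does yield $\lim a_k/k$, because the sup-norms are exactly submultiplicative. But the proposition concerns the pointwise quantity $\frac1k\log\PPPP_k^V\mek(u)$, and to transfer the limit you need a two-sided comparison
$$
C^{-1}\,\PPPP_k^V\mek(u)\ \le\ \|\PPPP_k^V\mek\|_{\aA(\{u\})}\ \le\ C\,\PPPP_k^V\mek(u)
$$
with $C$ \emph{independent of $k$}. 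The first inequality is easy (Markov property plus boundedness of $V$ on the compact set, with a fixed number of steps); the second --- that the value at the single point $u$ controls the supremum over the whole domain of attainability, uniformly in $k$ --- is the crux, and it does not ``follow since $V$ is continuous on the compact $X$'': continuity only gives $e^{-k\|V\|}\le\PPPP_k^V\mek\le e^{k\|V\|}$, an $O(k)$ error after taking logarithms, which is useless for Fekete. Note also that the system is not irreducible on $X$, so no elementary Harnack-type bound is available.

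The paper isolates exactly this comparison as Lemma \ref{L:5.4} and proves the hard direction by contradiction: assuming $\|\PPPP^V_{k_n}\mek\|_{B}/\|\PPPP^V_{k_n}\mek\|_{\aA(B)}\to0$, it uses the refined uniform Feller property (Proposition \ref{P:4.1}) to get equicontinuity of the normalised functions $\|\PPPP^V_{k_n}\mek\|_{\aA(B)}^{-1}\PPPP^V_{k_n}\mek$, extracts a limit $g$ with $\|g\|_{\aA(B)}=1$ by Arzel\`a--Ascoli, and then pulls the positivity of $g$ at some point of $\aA(m,B)$ back to a point of $B$ by applying $\PPPP_m^V$, contradicting the assumption. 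Nothing in your sketch supplies this step; as written, the ``corresponding lower bound'' is asserted rather than proved, and it is the only non-routine ingredient of the existence part of the proposition.
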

\begin{proof}  First assume that $V\in L_b(H)$.
If we have  one of     (1)-(3), then by \eqref{E:1.6},  
$$
 \lambda_V^{-k}  \PPPP^V_k \mek(u) \to h_V(u) \quad\text{as $k\to \ty $}.
 $$
Taking the logarithm, we infer that $Q(V,u)=\log \la_V $.  In the general case, we cannot use the multiplicative ergodicity, so we proceed differently.  We use the following lemma which is established below.
\begin{lemma} \label{L:5.4} Under   Conditions {\rm(A)-(D)}, for any $V\in L_b(H)$ and     bounded set~$B\subset H$, there is a number~$C>0$ such that
\begin{equation}
 C^{-1} \|\PPPP^V_k \mek\|_{B}\le  \|\PPPP^V_k \mek\|_{\aA(B)}  \le C \|\PPPP^V_k \mek\|_{B}, \quad\,\,  k\ge1.\label{E:5.10'}
\end{equation}  
\end{lemma}
We take any $u\in H$ and apply \eqref{E:5.10'} for~$B=\{u\}$,
\begin{equation}
 C^{-1}\, \PPPP^V_k \mek (u)\le  \|\PPPP^V_k \mek\|_{\aA(\{u\})}  \le C \,\PPPP^V_k \mek (u), \quad\,\,  k\ge1. \label{E:5.10}
\end{equation}  Since the set $\aA(\{u\})$ is invariant for \eqref{E:0.1},  we have 
$$
\|\PPPP^V_{n+m} \mek\|_{\aA(\{u\})}  \le \|\PPPP^V_{n} \mek\|_{\aA(\{u\})} \|\PPPP^V_{m} \mek\|_{\aA(\{u\})}, \quad m,n\ge1.
$$This implies that the function $f:\Z_+\to \rr$, $f(k)=\log \|\PPPP^V_k \mek\|_{\aA(\{u\})}$ is sub-additive, hence, by the Fekete lemma, the following limit exists
$$
\lim_{k\to\ty} \frac1k\log \|\PPPP^V_k\mek\|_{\aA(\{u\})}.
$$
 Applying \eqref{E:5.10}, we get the existence of limit~\eqref{E:5.9}.  
 
 \smallskip
  Now let us assume   that  $V\in C(H)$. As $\aA(\{u\})$ is compact in $H$, we can find a sequence $V_n\in L_b(H)$ such that $\|V-V_n\|_{\aA(\{u\})}\to0$ as~$n\to\ty$. Then using the inequality 
$$
\left|\frac1k \log \PPPP_k^V \mek(u)-\frac1k \log \PPPP_k^{V_n} \mek(u)\right|\le \|  V-  V_n\|_{\aA(\{u\})}, \quad k,n\ge1,
$$we get the existence of limit~\eqref{E:5.9} for any $V\in C(H)$ and $u\in H$.
\end{proof}

\begin{proof}[Proof of Lemma \ref{L:5.4}] Using the Markov property and the fact that $V$ is bounded on $\aA(B)$, we get
\begin{align*}
  \PPPP^V_k \mek (u)&\le C_1\,\E_u \left( \PPPP^V_{k-1} \mek(u_1)\right)\le C_1\,  \|\PPPP^V_{k-1} \mek\|_{\aA(1,B)}   \le C_2\,  \|\PPPP^V_k \mek\|_{\aA(B)}  \end{align*}  
  for any $k\ge1$ and  $u\in B$. This proves the first inequality in \ef{E:5.10'}. 
    The proof of the second inequality relies on the uniform Feller property.
We argue
by contradiction.  If this inequality is not true, then there is a sequence $k_n\to\ty$ such that 
  \begin{equation}\label{E:5.12}
  \frac{ \|\PPPP^V_{k_n} \mek\|_{B} }{\|\PPPP^V_{k_n} \mek\|_{\aA(B)} }\to 0\quad\text{as $n\to\ty$}.
  \end{equation}     
  By Proposition~\ref{P:4.1},  the sequence $\{\|\PPPP^V_k \mek\|_{\aA(B)}^{-1}\PPPP^V_k\mek,k\ge0\}$ is uniformly equicontinuous on~$\aA(B)$.      The Arzel\`a--Ascoli theorem implies the existence of a subsequence of $k_n$, which is again denoted by $k_n$,   and a non-negative function~$g\in C(\aA(B))$ such that 
   \begin{equation}\label{E:5.13}
     \frac{\PPPP^V_{k_n}\mek  }{ \|\PPPP^V_{k_n} \mek\|_{\aA(B)}} \to g\quad\mbox{in $C(\aA(B))$ as $n\to\ty$}.
  \end{equation} 
  Clearly, $\|g\|_{\aA(B)}=1$. Hence there is an integer $m\ge1$ and a point   $v_*\in \aA(m,B)$ such that   $g(v_*)>0$.
 From~\eqref{E:5.13} and the Lebesgue
theorem on dominated convergence it follows  that 
  $$          \frac{\PPPP^V_{k_n+m}\mek  }{ \|\PPPP^V_{k_n} \mek\|_{\aA(B)}} \to \PPPP^V_{m} g\quad\mbox{in $C(\aA(B))$ as $n\to\ty$},
  $$where $\PPPP^V_{m} g(u_*)>0$   for some $u_*\in B$. Therefore, for any sufficiently large $n\ge1$, 
  $$
  \frac{\PPPP^V_{k_n}\mek  (u_*)}{ \|\PPPP^V_{k_n}\mek\|_{\aA(B)}}  \ge e^{-m \|V\|_{\aA(B)}}  \frac{\PPPP^V_{k_n+m}\mek(u_*)  }{ \|\PPPP^V_{k_n} \mek\|_{\aA(B)}}\ge \frac12 e^{-m \|V\|_{\aA(B)}} \PPPP^V_{m} g(u_*) ,
  $$ 
which contradicts \ef{E:5.12} and proves \eqref{E:5.10'}.  
\end{proof}

Combining  convergence   \eqref{E:1.6} with a well-known   perturbation argument~\cite{wu-2001, MT-2005}, we 
   prove the  analyticity of the pressure function.
\begin{proposition}\label{AP:5.3}
Assume that Conditions {\rm(A)-(D)} are fulfilled and $V\in L_b(H)$. Then there is a number $p>0$ such that the following limit exists
\be\label{AE:5.9}
Q(z V)=\lim_{k\to\iin}\f{1}{k}\log  \PPPP^{z V}_k \mek (u)
\ee
 for any $u\in H$ and    $z\in D_{p}=\{x\in  \C: |x|\le p\}$. Moreover,        the map 
$
z \mapsto Q(z V)
$ is real-analytic in some neighborhood of the origin. If one of the properties {\rm(1)} and {\rm(3)} in Proposition \ref{P:5.3} is satisfied, then this map   exists and  is  real-analytic on~$\rr$.
\end{proposition}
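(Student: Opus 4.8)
The plan is to establish Proposition \ref{AP:5.3} by a standard analytic perturbation argument for the leading eigenvalue of the Feynman--Kac semigroup, using the exponential convergence from our multiplicative ergodic theorems as the key input. First I would treat the general case $V\in L_b(H)$, where Proposition \ref{P:5.3} already guarantees the existence of the limit $Q(zV)$ for every $u\in H$ and every real $z$ without any oscillation restriction; the content to be added is the analyticity near the origin. The idea is that for $|z|$ small, the complexified operator $\PPPP_1^{zV}$ is a small perturbation of the Markov operator $\PPPP_1^0 = P_1$, which has a simple isolated eigenvalue $1$ (with eigenvector $h_0\equiv \mek$ and eigenmeasure $\mu$) separated from the rest of its spectrum by a spectral gap. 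That spectral gap is precisely what the exponential convergence \eqref{E:1.6} (or \eqref{E:2.3}) provides when specialised to $V=0$: the normalised semigroup $\PPPP_k^0\mek$ converges to $\mu$ at rate $e^{-\gamma k}$ on $\aA$, so on the compact space $X=\aA(B_\rho)$ the operator $\PPPP_1$ has $1$ as an algebraically simple eigenvalue and the rest of the spectrum inside a disc of radius $e^{-\gamma}<1$.

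The key steps, in order, would be: (i) Fix a bounded set $B$ (or a point $u$) and work on the compact invariant set $\aA(B)$, or globally on $X=\aA(B_\rho)$; observe that $z\mapsto \PPPP_1^{zV}$, viewed as a map into the Banach algebra of bounded operators on $C(X)$ (or on the space $L_b(X)$), is entire, since $\PPPP_1^{zV}f(u)=\e_u\{f(u_1)e^{zV(u_1)}\}$ and $V$ is bounded, so the series in $z$ converges in operator norm uniformly on compact sets. (ii) Invoke analytic perturbation theory (Kato): since $1$ is a simple isolated eigenvalue of $\PPPP_1^{0}$, there is a neighborhood $D_p$ of $0$ in $\C$ on which $\PPPP_1^{zV}$ has a unique eigenvalue $\lambda(z)$ near $1$, depending analytically on $z$, with analytic spectral projection, and $\lambda(0)=1$, $\lambda(z)>0$ for real $z$ near $0$. (iii) Identify $\lambda(z)$ with $e^{Q(zV)}$: from \eqref{E:1.6} applied to the potential $\Re(zV)$ for real $z$, or more directly from Proposition \ref{P:5.3} together with the eigenvalue relation $\PPPP_1^{zV}h_{zV}=\lambda_{zV}h_{zV}$, we get $Q(zV)=\log\lambda_{zV}=\log\lambda(z)$ for real $z$ in a neighborhood of $0$; hence $z\mapsto Q(zV)$ is real-analytic there, and the limit \eqref{AE:5.9} exists and is independent of $u$ for $|z|\le p$ (shrinking $p$ so $\lambda(z)$ stays off the branch cut of $\log$). (iv) For the last sentence: if property (1) holds ($u\in\aA$) or property (3) holds (Condition (E)), then Theorems \ref{T:1.1} and \ref{T:1.3} apply to $\Re(zV)$ for \emph{every} real $z$, so $Q(zV)=\log\lambda_{zV}$ for all $z\in\rr$; since $\lambda_{zV}$ is given locally by the analytic function $\lambda(\cdot)$ around each real point (rerun the perturbation argument centred at an arbitrary real $z_0$, using that the exponential convergence at potential $z_0 V$ again furnishes the spectral gap needed to isolate the top eigenvalue), the map $z\mapsto Q(zV)$ is real-analytic on all of $\rr$.

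The main obstacle I anticipate is the careful verification in step (ii) that the spectral picture for $\PPPP_1^0$ on the relevant Banach space is genuinely ``one simple eigenvalue, plus a gap'', in a form usable by analytic perturbation theory. Exponential mixing \eqref{E:0.2}, resp.\ \eqref{E:2.3} with $V=0$, gives quasi-compactness / a spectral gap for $\PPPP_1$ acting on $L_b(X)$ (or on the appropriate Lipschitz-type space where $\PPPP_1$ is bounded and the gap is visible), but one must check that the complexified operators $\PPPP_1^{zV}$ remain bounded on that \emph{same} space with norms locally bounded in $z$ — here the Lipschitz constant of $e^{zV}$, controlled by $\|V\|_L$, is the relevant quantity, and boundedness of $V$ keeps everything finite. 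A secondary, more bookkeeping obstacle is passing from $V\in L_b$ to the claim being only a \emph{local} (in $z$) statement in general: one cannot globalise without the oscillation bound, which is exactly why the final sentence restricts to cases (1) and (3), where our stronger theorems remove the oscillation constraint and let the perturbation argument be re-centred at every real $z_0$. The rest — entirety of $z\mapsto\PPPP_1^{zV}$, identification $Q=\log\lambda$, positivity of $\lambda(z)$ for real $z$ — is routine given the results already proved.
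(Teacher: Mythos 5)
Your proposal is correct and follows essentially the same route as the paper: complexify the Feynman--Kac operator on a Lipschitz-type Banach space, use exponential mixing for $V=0$ to exhibit a simple isolated eigenvalue $1$ with a spectral gap, invoke Kato's analytic perturbation theorem to continue the eigenvalue analytically near $z=0$, identify it with $e^{Q(zV)}$, and re-centre the argument at an arbitrary real $z_0$ under properties (1) or (3) where Theorems \ref{T:1.1} and \ref{T:1.3} supply the needed gap. The only presentational difference is that the paper spells out the Cauchy-integral resolvent estimate $\|\PPPP_k^{zV}-\lambda_{zV}^k\mathsf{P}_{zV}\|_{\LL}\le M\gamma^k$ to obtain the limit \eqref{AE:5.9} directly for complex $z$, whereas you infer the existence of the limit somewhat more informally; this is a matter of bookkeeping rather than a genuine gap.
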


\begin{proof}Let us denote by $L_{b,\C}(H)$ the complexification of the space $L_{b}(H)$ and by~$\LL$ the space of bounded linear operators from~$L_{b,\C}(H)$ to~$L_{b,\C}(H)$ endowed with the natural norm $\|\cdot \|_\LL$. We
consider the family $\{\PPPP^{z V}_1:  z\in D_p\}$   in~$\LL$.~It is straightforward to check that this is a {\it holomorphic family}   in the sense of Section~VII.1.1 in~\cite{kato-95}, p. 365. By   exponential mixing~\eqref{E:0.2}, the   operator $\PPPP_1=\PPPP^{z V}_1$ with $z=0$ has a simple isolated eigenvalue~$\lambda_{0}=1$ corresponding to   eigenvectors~$h_0=\mek$ and~$\mu_0=\mu$.
Let~${\mathsf P}_0f=\lag f, \mu\rag$ be the spectral projection associated with this eigenvalue.~Clearly,   the spectral radius of the operator $\PPPP_1(1-{\mathsf P}_0)$ is less than $e^{-\alpha}<1$. 

\smallskip

 By Kato's holomorphic perturbation theorem (see Theorems~1.7 and~1.8 in Section~VII.1.3 in~\cite{kato-95}, p. 368--370), there is a number $p>0$ such that    the following property holds:
 \begin{enumerate}
 \item[\bf $\bullet$]	 The operator  $\PPPP_1^{zV}$   has a simple eigenvalue $\lambda_{zV}$ for any $z\in D_p$.~Moreover, the maping $z\to (\lambda_{zV}, {\mathsf P}_{zV})$  is analytic on $D_p$, where ${\mathsf P}_{zV}$ is the spectral projection associated with $\lambda_{zV}$. 
 \end{enumerate}
 In particular, for sufficiently small $p>0$,   we have  	  
 \begin{gather}
 	\inf_{\, z\in D_p} |\lambda_{zV}|>   \gamma:=(e^{-\alpha}+1)/2,\label{EE1}\\
 	 \sup_{\, z\in D_p}\|\PPPP_1^{zV}(1-{\mathsf P}_{zV})\|_\LL \le \gamma,\label{EE2}\\
 	M  :=\sup_{x\in S_\gamma, \, z\in D_p} \|(I-x \,\PPPP_1^{zV}(I-{\mathsf P}_{zV}))^{-1}\|_{\LL}<\ty,\label{EE3}
  \end{gather}
 where $S_\gamma=\{x\in \C: |x|=\gamma^{-1}\}$.
  By the  Cauchy integral formula, we have
 \begin{align*}
 	(\PPPP_1^{zV}(I-{\mathsf P}_{zV}))^k&=\frac{1}{k!}\frac{\p^k}{\p x^k}  (I-x \,\PPPP_1^{zV}(I-{\mathsf P}_{zV}))^{-1}|_{x=0}\\
 	&=\frac{1}{2\pi i} \int_{S_\gamma} x^{-k-1} (I-x \,\PPPP_1^{zV}(I-{\mathsf P}_{zV}))^{-1} \dd x, \quad k\ge 1.
 \end{align*}
Combining this with \eqref{EE2} and \eqref{EE3}, we see that
$$
\|\PPPP_k^{zV}  - \lambda_{zV}^k {\mathsf P}_{zV}\|_\LL=\| (\PPPP_1^{zV}(I-{\mathsf P}_{zV}))^k\|_\LL\le M\gamma^k .
$$
This and \eqref{EE1} readily imply limit \eqref{AE:5.9}. 

\smallskip
The remaining assertions are proved  using limit \eqref{E:1.6} for  the  potential~$z_0 V$ and applying a similar perturbation argument for  the family $\{\PPPP^{z V}_1:  z\in D_p(z_0)\}$,  where $z_0\in \rr $ is arbitrary and $D_p(z_0)=\{x\in  \C: |x-z_0|\le p\}$. 
\end{proof}

In view of limit \eqref{AE:5.9}, we can apply Bryc's criterion (see Proposition 1 in~\cite{bryc-1993}). 
We obtain immediately that for any $V\in L_b(H)$ with $\lag V,\mu\rag=0$ and any $u\in H$, the following   central limit theorem holds
 $$
 \DD_u\left(\frac{1}{\sqrt k}\sum_{n=1}^kV(u_n)\right)\to N(0,\sigma_V), \quad k\to \ty,
 $$
	where 
	$\mu$ is the  stationary measure of $(u_k, \pP_u)$, $\DD_u$ is  the distribution of a random variable under the law $\pP_u$, and $\sigma_V= \frac{\p^2}{\p\alpha^2} Q(\alpha V)|_{\alpha=0}$.
  See Section 4.1.3 in \cite{KS-book} for another proof of this result and~\cite{shirikyan-ptrf2006} for an estimate for the rate of convergence.

\subsection{Large deviations} \label{S:5.4}

In this section, we give some applications to large deviations principle (LDP). We use some standard terminology from the LDP theory (e.g., see \cite{ADOZ00, DS1989}). Recall that the {\it occupation measures} for  the trajectories of \eqref{E:0.1} are defined by
$$
\zeta_k=\f{1}{k}\sum_{n=0}^{k-1}\De_{u_n}.
$$ 
In Theorem 1.3 in \cite{JNPS-2012}, a level-2 LDP is obtained    
 for the family  $\{\zeta_k\}$     in the case when the initial condition belongs to   $\aA$.~In this section, we complete that theorem, by stating two   results that establish LDP   in the case of an arbitrary   initial condition~in~$H$. 
The following theorem gives, in particular, a level-1 LDP of local type
 under the same conditions as in~\cite{JNPS-2012}.
     \begin{theorem}  Let Conditions {\rm(A)-(D)} be fulfilled. Then       for any  non-constant   function $f\in L_b(H)$,  there is $\es=\es(f)>0$ 
 and a convex function~$I^f:\rr\to \rr_+$
such that,  
  for any     open subset~$O$ of the interval $(\langle f, \mu\rangle-\es, \langle f, \mu\rangle+\es)$ and $u\in H $, we have
$$
\lim_{k\to\infty} \frac1k\log   \pP_u\left\{\lag f,\zeta_k\rag \in O\right\}= -\inf_{x\in O} I^f(x),
$$  where $\mu$ is the  stationary measure.   This  limit   is uniform with respect to~$u$ in a bounded set of  $H $.  Moreover, if Condition~\rm{(E)} is also fulfilled, then $\es=+\ty$.
       \end{theorem}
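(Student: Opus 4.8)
The plan is to deduce this local (level-1) LDP from the analyticity of the pressure function via the G\"artner--Ellis theorem. First I would fix a non-constant $f\in L_b(H)$ and, applying Proposition~\ref{AP:5.3} to the potential $V=f-\lag f,\mu\rag$ (which lies in $L_b(H)$), obtain a number $p>0$ such that the pressure $Q(zf)=\lim_{k\to\iin}\frac1k\log\PPPP^{zf}_k\mek(u)$ exists for $z\in D_p$ and is real-analytic in a neighborhood of the origin; note $\PPPP^{zf}_k\mek(u)=\E_u\exp(z\sum_{n=1}^kf(u_n))$, which differs from $\E_u\exp(z\,k\lag f,\zeta_k\rag)$ only by the bounded factor $e^{zf(u_0)}$, so it has no effect on the limit. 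Thus the normalized logarithmic moment generating functions $\frac1k\log\E_u e^{z\,k\lag f,\zeta_k\rag}$ converge, uniformly for $u$ in a bounded set (this uniformity is exactly what \eqref{E:1.6} together with Lemma~\ref{L:5.4} gives us), to the real-analytic function $\Lambda(z):=Q(zf)$ on a real interval $(-p,p)$.

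Next I would invoke the standard local version of the G\"artner--Ellis theorem: since $\Lambda$ is finite and differentiable on $(-p,p)$ with $\Lambda(0)=0$, $\Lambda'(0)=\lag f,\mu\rag$ (by the CLT/variance computation, or simply by dominated convergence), and $\Lambda$ is strictly convex near $0$ because $f$ is non-constant — indeed $\Lambda''(0)=\sigma_V>0$, the asymptotic variance, which is positive precisely when $f$ is not $\mu$-a.e.\ constant, hence not constant since $\supp\mu=\aA$ generates the attainable dynamics — the Legendre transform $I^f(x)=\sup_{z}(zx-\Lambda(z))$ is a convex rate function, finite and continuous on an interval $(\lag f,\mu\rag-\es,\lag f,\mu\rag+\es)$ around the mean, and one has the precise limit $\lim_k\frac1k\log\pP_u\{\lag f,\zeta_k\rag\in O\}=-\inf_{x\in O}I^f(x)$ for every open $O$ in that interval. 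Openness of $O$ lets us combine the upper and lower G\"artner--Ellis bounds into an equality; the exposed-point condition is automatic in the strictly convex regime near the mean. The uniformity in $u$ over bounded sets is inherited from the uniformity of the convergence $\frac1k\log\E_u e^{z k\lag f,\zeta_k\rag}\to\Lambda(z)$.

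Finally, for the last sentence: if Condition~(E) holds, Proposition~\ref{AP:5.3} asserts that $z\mapsto Q(zf)$ is real-analytic on all of $\rr$, not just near the origin. Then $\Lambda$ is a finite, smooth, convex function on $\rr$, and strict convexity holds on the whole line wherever $\Lambda''>0$; by analyticity, $\Lambda''$ cannot vanish on an interval unless $f$ is constant, so $\Lambda$ is strictly convex and the Legendre pair $(\Lambda,I^f)$ is a genuine global convex conjugate pair. The full-line G\"artner--Ellis theorem then yields the equality for every open $O\subset\rr$, i.e.\ $\es=+\iin$. The main obstacle I anticipate is the bookkeeping around uniformity and the $u$-independence: one must check carefully that the perturbative estimate in Proposition~\ref{AP:5.3} delivers convergence of $\frac1k\log\PPPP^{zf}_k\mek(u)$ that is uniform over $u\in B_R$ (in the $\Osc(V)$-unrestricted case this uses \eqref{E:1.6} of Theorem~\ref{T:1.3} via $\PPPP^{zf}_k\mek(u)\sim h_{zf}(u)\lambda_{zf}^k$ with $h_{zf}$ continuous and positive, hence bounded away from $0$ and $\iin$ on $B_R$ by Lemma~\ref{L:5.4}), and that the resulting rate function $I^f$ genuinely does not depend on $u$ — which follows because $\Lambda(z)=Q(zf)$ is $u$-independent under any of (1)--(3), and here (1) always holds while (3) is what upgrades $\es$ to $+\iin$.
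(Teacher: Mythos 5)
Your proposal follows exactly the paper's route: the paper disposes of this theorem in a single sentence, as an immediate consequence of the analyticity of the pressure function from Proposition~\ref{AP:5.3} together with the local G\"artner--Ellis theorem (Theorem A.5 in \cite{JOPP}), which is precisely the argument you give, including the use of Theorem~\ref{T:1.3} under Condition (E) to get $\es=+\infty$. The one overstatement is your claim that $\Lambda''(0)=\sigma_V>0$ whenever $f$ is non-constant (the asymptotic variance can vanish for non-constant $f$, e.g.\ for coboundaries), but the local G\"artner--Ellis theorem invoked here does not require strict convexity, so this does not affect the conclusion.
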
 
        This theorem   follows immediately from the analyticity of the pressure function established in Proposition \ref{AP:5.3} and a local version of the G\"artner--Ellis theorem (e.g., see Theorem A.5 in~\cite{JOPP}).
     
     \smallskip
    A level-2   LDP holds   in the whole space     $H$, 
      provided that Conditions~{\rm(A)-(E)} are   fulfilled. Namely, we have the following result.
       
        \begin{theorem}   Let the assumptions {\rm(A)-(E)} be fulfilled. Then, there is a convex function $I:\ppp(H)\to [0, +\iin]$ with compact level sets $\left\{I\le M\right\}$ in~$H$ for any~$M>0$  and that is infinite outside $\ppp(\aaa)$ such that for any random initial point $u_0$ whose law $\lm=\DD u_0$ has a bounded support in $H$, we have
$$
-\inf_{\sigma\in \dot\Gamma}I(\sigma)\le\liminf_{k\to \iin}\f 1 k\log \pp_\lm\{\zeta_k\in \Gamma\}\le\limsup_{k\to \iin}\f 1 k\log\pp_\lm\{\zeta_k\in \Gamma\}\le -\inf_{\sigma\in \bar\Gamma}I(\sigma)
$$       
for any subset $\Gamma\subset \ppp(H)$, where $\dot\Gamma$ and $\bar\Gamma$ stand for its interior and closure, respectively. Moreover, the function $I$ can be written as
\be\label{e57.3}
I(\sigma)=\sup_{V\in C(\aaa)}\left(\lag V, \sigma\rag-Q(V)\right),      \q\q\sigma\in\ppp(\aaa),
\ee
where $Q(V)$ is the   pressure function defined in  Proposition \ref{P:5.3}. 
\end{theorem}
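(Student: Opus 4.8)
The plan is to deduce the level-2 LDP from the now-established machinery on pressure functions, following closely the approach of \cite{JNPS-2012} but now exploiting that, under Condition (E), the pressure $Q(V)$ exists and is independent of the initial condition for \emph{every} $V\in C(H)$ (Proposition \ref{P:5.3}, case (3)). The strategy rests on Kifer's criterion for large deviations (see \cite{kifer-1990}), which reduces the LDP for the family $\{\zeta_k\}$ under $\pp_\lm$ to two ingredients: (a) the existence of the limit
$$
Q(V)=\lim_{k\to\iin}\f1k\log \e_\lm\exp\left(\sum_{n=0}^{k-1}V(u_n)\right)=\lim_{k\to\iin}\f1k\log \e_\lm\left\{\Xi_k^V\mek\right\}
$$
for each $V$ in a suitable class of functions, together with (b) a compactness/exponential tightness property ensuring that the candidate rate function has compact level sets. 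Since $\zeta_k$ takes values in $\ppp(H)$, one works with the weak topology; because $H$ is not locally compact, exponential tightness is not automatic and must be extracted from the dissipativity Condition (A) (via the absorbing set $B_\rho$ and the first-hitting-time estimate \eqref{E:3.5}).

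First I would verify that for $V\in C(H)$ the limit $Q(V)$ defined in \eqref{E:5.9} coincides (under Condition (E)) with the initial-measure-averaged version above: one passes from the pointwise limit $\f1k\log\PPPP_k^V\mek(u)\to Q(V)$ to the averaged one by the sandwich $\inf_{u\in\supp\lm}\PPPP_k^V\mek(u)\le \e_\lm\{\Xi_k^V\mek\}\le\sup_{u\in\supp\lm}\PPPP_k^V\mek(u)$ combined with Lemma \ref{L:5.4} (which controls $\|\PPPP_k^V\mek\|_B$ by $\|\PPPP_k^V\mek\|_{\aA(B)}$ uniformly) and the fact that $\supp\lm$ is bounded. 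This gives part (a) of Kifer's criterion on the class $C(H)$, and in fact the convexity and well-posedness of $Q$ follow from its defining limit. Next I would establish exponential tightness: using \eqref{E:3.5} one shows $\sup_{u}\e_u e^{\delta\tau_\es}\le 2$, hence the trajectory spends, with overwhelming probability, all but an exponentially small fraction of time inside the compact absorbing set $\aA(B_{\rho+1})$; a standard argument (cf.\ Section 4.2 in \cite{JNPS-2012} or Chapter 4 of \cite{KS-book}) then upgrades this to exponential tightness of $\{\zeta_k\}$ in $\ppp(H)$, which also forces any LDP rate function to be infinite outside $\ppp(\aA)$ and to have compact level sets $\{I\le M\}$ in $H$.

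Then I would invoke Kifer's theorem to obtain the upper bound for all closed sets and the lower bound for all open sets, with rate function given by the Legendre transform
$$
I(\sigma)=\sup_{V\in C(H)}\left(\lag V,\sigma\rag - Q(V)\right),\q\q \sigma\in\ppp(H),
$$
and then reduce the supremum from $C(H)$ to $C(\aA)$: since $I(\sigma)=+\iin$ unless $\sigma$ is supported on $\aA$, and since $Q(V)$ depends on $V$ only through its restriction to $\aA$ (because $\PPPP_k^V\mek(u)$ for $u\in\aA$, and more generally the relevant limiting behaviour, is governed by the dynamics on the invariant compact set $\aA$ — this is where Lemma \ref{L:5.4} and the invariance of $\aA$ enter), for $\sigma\in\ppp(\aA)$ one may replace $V$ by any continuous extension of $V|_\aA$ without changing either term, yielding formula \eqref{e57.3}. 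Convexity of $I$ is immediate as a sup of affine functionals.

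The main obstacle I anticipate is establishing the \emph{full} (not merely lower-semicontinuous-regularized) large deviations lower bound and matching it with the upper bound — i.e.\ showing the rate function obtained from Kifer's criterion is the genuine LDP rate and that no gap appears on non-convex sets. This typically requires showing that $Q$ is, in a suitable sense, the Legendre transform of $I$ back again (a duality/tightness argument), and verifying the hypotheses of Kifer's criterion precisely in the non-locally-compact setting; here one must lean on the uniformity in the initial condition (Theorems \ref{T:1.1}--\ref{T:1.3} and Lemma \ref{L:5.4}) and on exponential tightness to close the loop, much as in \cite{JNPS-2012} but now without the irreducibility that was available there — the substitute being Condition (E) together with the concentration-near-$\aA$ estimate (Lemma \ref{L:3.4}).
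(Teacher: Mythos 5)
Your overall plan matches the paper's, which simply points the reader to Theorem~\ref{T:1.3} together with the argument of Theorem~1.3 in \cite{JNPS-2012} via Kifer's criterion. The ingredients you list — existence of the pressure $Q(V)$ for all $V\in C(H)$ uniformly in the initial point (Proposition~\ref{P:5.3}(3) plus the sandwich via Lemma~\ref{L:5.4}), exponential tightness from the dissipativity/absorption estimates, and the reduction of the Legendre supremum from $C(H)$ to $C(\aaa)$ — are all correct and are exactly what makes the argument of \cite{JNPS-2012} carry over.

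However, your "main obstacle" paragraph misidentifies where Theorem~\ref{T:1.3} actually does the heavy lifting. Kifer's criterion does not just ask for $Q(V)$ to exist and for exponential tightness; its decisive hypothesis (the one that drives the \emph{lower} bound) is the \emph{uniqueness of the equilibrium state} associated with a dense family of potentials $V$, i.e.\ that for such $V$ the supremum $\sup_\sigma\left(\lag V,\sigma\rag - I(\sigma)\right)$ is attained at a single $\sigma_V$. This is not a vague "Legendre duality/tightness" issue: it is precisely what the multiplicative ergodic theorem delivers. Under Conditions (A)--(E), Theorem~\ref{T:1.3} gives the pair $(h_V,\mu_V)$ and the exponential convergence \eqref{E:1.6}, from which one shows that $\sigma_V=h_V\mu_V$ is the unique equilibrium state for $V$, for every $V\in L_b(H)$. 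Without identifying this step, you have in effect only reduced the problem to Kifer's criterion, not verified its hypotheses; and it is exactly this uniqueness that fails to be available in the non-irreducible, full-phase-space setting without Condition~(E) and the MET of Theorem~\ref{T:1.3}. Once this is recognized, the rest of your outline closes the argument as in \cite{JNPS-2012}.
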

    This result  can be proved   using Theorem \ref{T:1.3} and literally repeating the arguments of the proof of Theorem~1.3 in \cite{JNPS-2012} based on the application of   the Kifer's criterion obtained in~\cite{kifer-1990}. 


\subsection{The SLLN time}

In   paper \cite{shirikyan-ptrf2006}, a   strong law of large numbers   is obtained  for system \eqref{E:0.1}. More precisely, it is proved that for any~$f\in L_b(H)$,   $u\in H$, and   $\es>0$, the following inequality holds  
$$
\left|\f{1}{k}\sum_{n=1}^k f(u_n)-\lag f, \mu\rag\right|\le C\, k^{-1/2+\es} \q\q\text{ for } k\ge T,
$$
where  $\mu$ is the stationary measure of $(u_k, \pp_u)$ and 
$T\ge1$ is a random integer    whose any   moment is finite, i.e., $\e_uT^m\!<\!\iin$ for any~$m\ge 1$. Here we show that this polynomial bound on $T$ is optimal. 
\begin{proposition}\label{PP:aps}
Under Conditions {\rm(A)-(D)}, assume that for some non-cons\-tant function $f\in L_b(H)$  with $f(0)\neq 0$  and  initial  condition $u\in H$, there is a sequence   $r_k$ going to zero as $k\to\iin$ and  a random integer $T\ge1$ such that
\be\label{E:5.4}
\left|\f{1}{k}\sum_{n=1}^k f(u_n)-\lag f, \mu\rag\right|\le r_k \q\text{ for } k\ge T.
\ee
Then $T$ has an infinite exponential moment, i.e., 
$$
\e_u e^{\al T}=+\iin\q \q \text{ for any }\al>0.
$$
\end{proposition}
\bp
{\it Step 1: Contradiction argument.}
Suppose that for some $\al\!>\!0$ and~$M\!>\!0$,  
\be\label{E:5.5}
\e_u e^{\al T}\le M.
\ee
Let $\De>0$ be the number in Theorem \ref{T:1.2}. Up to multiplying   $f$ by a small positive constant and deducing another one, we may assume that $\Osc(f)\le \De$, $\| f\|_\ty\le \al$, and $\lag f, \mu \rag=0$.
Then, by \eqref{E:1.6},    
\be\label{E:5.6}
e^{-Q(f)k}\e_u \exp\left( \sum_{n=1}^kf(u_n)\right)\to h_f(u) \q\q\text{ as }k\to\iin,
\ee where $Q(f)=\log \lambda_f$. In Step 2, we will show that,   up to replacing $f$ by $-f$, we~have 
\begin{equation}\label{E:5.7}
Q(f)> 0.
\end{equation}
 We infer from \ef{E:5.4} that
$$
\sum_{n=1}^k f(u_n)\le \al \,T +\ch_{\{k\ge T\}}\sum_{n=1}^k f(u_n)\le \al\, T+k\,r_k,
$$
which, together with \ef{E:5.5}, implies
$$
\e_u \exp\left( \sum_{n=1}^kf(u_n)\right)\le e^{kr_k}\e_u e^{\al T}\le M e^{kr_k}.
$$
Combining this with \ef{E:5.6}, \ef{E:5.7}, and convergence $r_k\to 0$, we get $h_f(u)=0$, which is a contradiction.  

\smallskip
{\it Step 2: Proof of \eqref{E:5.7}.} As $Q: L_b(H)\to \rr $ is convex and $Q({\bf 0})=0$, up to replacing $f$ by $-f$, 
 we can assume\,\footnote{In fact $Q(f)\ge \langle f, \mu\rangle=0$ for any $f\in L_b(H)$. This follows from \ef{e57.3} and $I(\mu)=0$; see~(4.6) of \cite{JNPS-2012}.} that $Q(f)\ge0$.  
Let us suppose  that $Q(f)=0$.  Then from \eqref{E:1.6} we conclude that  
$$
\sup_{k\ge1}\E_\mu  \exp\left(\sum_{n=1}^k   f(u_n)\right)<\infty,
$$  where $\E_\mu$   is the expectation corresponding to the stationary measure. This~implies   that
$$  k^{-1}\,\E_\mu  \left(\sum_{n=0}^k   f(u_n)\right)^2\to 0  \quad \text{ as $k\to \ty.$}
$$
Combining this with   Proposition~\ref{PPP:pro}, we get    $f\equiv 0$.~This contradicts the assumption that~$f$ is non-constant and completes the proof of the proposition.
\ep

\begin{proposition}\label{PPP:pro}Under the conditions of Proposition~\ref{PP:aps}, the following  
  limit exists for any   $f\in L_b(H)$ with $\lag f, \mu\rag=0$: 
\begin{equation}\label{E:5.8}
  k^{-1}\,\E_\mu  \left(\sum_{n=0}^k   f(u_n)\right)^2\to \sigma_f^2  \quad \text{ as $k\to \ty.$}
\end{equation}	Moreover, if  $f(0)\neq 0$, then $\sigma_f\neq0$.
\end{proposition}
\begin{proof} 
 This result is a discrete-time version of Proposition~4.1.4 in~\cite{KS-book}, and the proof is essentially the same except that here we do not have irreducibility of the process.  	 By the Markov property and the stationarity of~$\mu$, we have 
  \begin{align*}
  	\e_\mu \left(\sum_{n=0}^k   f(u_n)\right)^2&=	\e_\mu \sum_{n=0}^k \sum_{r=0}^k f(u_r) f(u_n)\\&= 2	\sum_{r=0}^k \sum_{n=r}^k \e_\mu  \left(f(u_r)\e_\mu( f(u_n)|\FF_r)\right)-	\e_\mu \left( \sum_{n=0}^k   f^2(u_n)\right)\\&= 2	\sum_{r=0}^k \sum_{n=r}^k \e_\mu  \left(f(u_r)(\PPPP_{n-r} f)(u_r)\right)-	(k+1)\lag f^2, \mu \rag  \\&= 2	\sum_{r=0}^k \sum_{n=r}^k  \lag f  \PPPP_{n-r} f,\mu\rag -	   (k+1)\lag f^2, \mu \rag \\&= 2	\sum_{n=0}^k (k+1-n)  \lag f  \PPPP_n f,\mu\rag -	   (k+1)\lag f^2, \mu \rag.  
  \end{align*}Dividing this relation by $k$ and passing to the limit as $k\to \ty$, we get \eqref{E:5.8} with $\sigma_f^2=2\lag gf,\mu\rag-\lag f^2,\mu \rag$ and 
  $g(u)=\sum_{n=0}^\ty \PPPP_nf(u)$ for $u\in H$. Note that, by exponential mixing \eqref{E:0.2}, the function $g:H\to \rr$ is well defined and bounded on bounded sets of $H$.
  
  To prove the second part of the proposition, let us assume that $\sigma_f=0$ and
 consider Gordin's martingale approximation
  $$
  M_k:=\sum_{n=0}^\infty\left(\e_u(f(u_n)|\FF_k)-\e_u(f(u_n)|\FF_0)\right), \quad u\in H, \quad k\ge0,
  $$where    $\FF_k$  is the filtration corresponding to the Markov   process $(u_k, \pP_u)$. We shall use the following equality for these approximations~\footnote{Equality \eqref{3ERT} follows immediately from the Markov property:   
  \begin{align*}
  M_k&= \sum_{n=0}^{k-1}   f(u_n) + 	\sum_{n=k}^\infty \e_u(f(u_n)|\FF_k)-\sum_{n=0}^\infty\e_u(f(u_n)|\FF_0)\\&= \sum_{n=0}^{k-1}   f(u_n) + 	\sum_{n=k}^\infty (\PPPP_{n-k} f)(u_{k})-\sum_{n=0}^\infty(\PPPP_n f)(u) \\&=\sum_{n=0}^{k-1}   f(u_n) + 	g(u_{k})-g(u). 
  \end{align*}} 
\begin{equation}\label{3ERT}
 \sum_{n=0}^{k-1} f(u_n)=   M_k- g(u_k)+g(u).
\end{equation}

  Repeating the arguments of (4.19) in~\cite{KS-book}, we see that 
\begin{equation}\label{PSE:s}
  \pP_\mu\{M_k=0 \quad \text{for all $k\ge0$}\}=1.
 \end{equation}Let us show that this equality implies that $f\equiv0$. Indeed, assume that $f(0)>0$ (the case $f(0)<0$ is similar), and let $B$ be a ball in $H$ centred at zero     such that~$f(u)> \varepsilon$ for any $u\in B.$ Using  the facts that $S(0)=0$ (which follows~from Condition~{\rm(A)}),~$0\in \KK$ (see Condition~{\rm(D)}), and~$0\in \supp \mu $ (which  follows from~\eqref{E:1.2} and  Condition~{\rm(D)}), we see that 
 $$
 \pP_\mu\{u_n\in B: n=0,\ldots, k\}>0 \quad \text{for any $k\ge0$}. 
 $$
This implies that 
\begin{equation}\label{3ERT2}
 \pP_\mu\left\{\sum_{n=0}^{k-1} f(u_n)>k\varepsilon \right\}>0 \quad \text{for any $k\ge1$}. 
\end{equation}
  Let $C>0$ be such that $|g(u)|\le C$ for $u\in B$. Then $|g(u_k)-g(u)|\le 2 C$ if~$u_k, u\in B$. If $k\ge1$ is so large that $k\varepsilon>2C$, combining    \eqref{3ERT} and  \eqref{3ERT2}, we see that \eqref{PSE:s} cannot hold. This contradiction shows that $\sigma_f\neq 0$.
  \end{proof}

\subsection{The speed of attraction}
For any $\es>0$, let us introduce the random variable
$$
\nnn^\es(\om)=\#\left\{m\ge1  : u_m(\omega)\notin\aaa_\es \right\},
$$
where, as before, $\aaa_\es$ is the $\es$-neighborhood of $\aaa$ in $H$.
\begin{proposition}\label{P:5.1}
Let Conditions  {\rm(A)-(D)} be fulfilled.~Then, for any $\es>0$ and~$u\in H$, the random variable $\nnn^\es(\omega)$ is   $\pp_u$-almost surely finite. 
Moreover, there is a positive constant  $\al$ not depending on $\es$ such that  
\be\label{E:5.1}
\sup_{u\in B_R}\e_u e^{\al\nnn^\es}<\iin \q\q \text{ for any } R>0.
\ee
If in addition Condition {\rm(E)} is satisfied, then \ef{E:5.1} holds     with any $\al>0$.
\end{proposition}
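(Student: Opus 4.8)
The plan is to connect the random variable $\nnn^\es$ with the Feynman--Kac semigroup $\PPPP_k^V\mek$ by choosing a suitable non-negative potential $V$ supported away from $\aA$, and then exploit the exponential bounds already established. First I would fix $\es>0$ and pick a Lipschitz-continuous function $V\ge0$ that vanishes on $\aA_{\es/2}$ and equals a positive constant $\beta$ outside $\aA_\es$; then $\sum_{n=1}^k V(u_n)\ge \beta\,\#\{1\le n\le k: u_n\notin\aA_\es\}$, so that $\exp(\beta\,\nnn^\es)=\lim_{k\to\infty}\exp(\beta\,\#\{1\le n\le k: u_n\notin\aA_\es\})\le \lim_{k\to\infty}\exp(\sum_{n=1}^k V(u_n))$ by monotone convergence. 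Hence, by Fatou's lemma,
$$
\e_u\,e^{\beta\,\nnn^\es}\le \liminf_{k\to\infty}\e_u\exp\Bigl(\sum_{n=1}^k V(u_n)\Bigr)=\liminf_{k\to\infty}\PPPP_k^V\mek(u).
$$
So it suffices to bound $\PPPP_k^V\mek(u)$ uniformly in $k$, which will follow from $\lambda_V=1$ together with an exponential bound of type \eqref{E:3.4}.

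The key observation is that $\lambda_V=1$: since $V$ vanishes on $\aA$ and $\aA$ is invariant, we have $\PPPP_k^V\mek(u)=\PPP_u\{u_k\in\aA\}\cdot 1=\cdots$—more precisely, for $u\in\aA$, $\e_u\exp(\sum_{n=1}^k V(u_n))=1$ for all $k$, so by \eqref{E:2.7} applied on $\aA$ we get $h_V\equiv 1$ on $\aA$ and $\lambda_V=1$. Now for the case of general $\al>0$ (under Condition (E)): by Lemma \ref{L:3.5}, since $V\in L_b(H)$ and $\lambda_V=1$, we have $\mM_R=\sup_{k\ge1}\|\PPPP_k^V\mek\|_{\aA(B_R)}<\infty$ for any $R>0$, hence $\sup_{u\in B_R}\PPPP_k^V\mek(u)\le\mM_R$ uniformly in $k$. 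Combining this with the Fatou bound above yields $\sup_{u\in B_R}\e_u e^{\beta\,\nnn^\es}\le\mM_R<\infty$, and since $\beta>0$ was an arbitrary positive constant (the construction of $V$ works for any height $\beta$), \eqref{E:5.1} holds with any $\al>0$. The $\pP_u$-a.s.\ finiteness of $\nnn^\es$ is then immediate.

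For the first part (only Conditions (A)--(D)), we cannot freely choose the height $\beta$, so instead I would use Lemma \ref{L:3.3}: there is a fixed $\De>0$ such that for any $V\in L_b(H)$ with $\Osc(V)\le\De$ one has $\mM_R<\infty$ for all $R>0$. I would therefore take $\beta=\De$ (so that $\Osc(V)=\De$) in the construction of $V$ above; note this also requires, via the normalisation in the proof of Lemma \ref{L:3.3}, that $\lambda_V=1$, which holds as observed. Then the same Fatou argument gives $\sup_{u\in B_R}\e_u e^{\De\nnn^\es}\le\mM_R<\infty$, i.e.\ \eqref{E:5.1} with $\al=\De$, a constant independent of $\es$. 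The main subtlety to get right is the verification that $\lambda_V=1$ for the chosen potential and the careful monotone/Fatou passage to the limit $k\to\infty$ relating $\nnn^\es$ to $\PPPP_k^V\mek$; the rest is a direct appeal to Lemmas \ref{L:3.3} and \ref{L:3.5}, which do all the heavy lifting (the bootstrap in Lemma \ref{L:3.5} being where Condition (E) enters).
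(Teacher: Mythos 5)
Your proof is essentially the same as the paper's: you build a non-negative Lipschitz potential $V$ that vanishes on (a neighbourhood of) $\aA$ and equals a fixed constant outside $\aA_\es$, observe that invariance of $\aA$ forces $\lambda_V=1$, and then bound $\e_u e^{\al\nnn^\es}$ through the Feynman--Kac quantity $\PPPP_k^V\mek(u)$, invoking Lemma~\ref{L:3.3} for the small-oscillation case and Lemma~\ref{L:3.5} under Condition~(E). The only cosmetic difference is in how you pass to the limit: you use monotone convergence/Fatou to get $\e_u e^{\beta\nnn^\es}\le\lim_k\PPPP_k^V\mek(u)\le\mM_R$ directly, whereas the paper first derives the tail estimate $\pp_u\{\nnn^\es\ge m\}\le\Lambda e^{-\De m}$ from $\PPPP_k^V\mek(u)\ge e^{\De m}\pp_u\{\nnn^\es_k\ge m\}$ and then sums, obtaining $\al=\De/2$; your route is a touch cleaner and even yields the slightly better constant $\al=\De$.
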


 \bp
 Let $\De>0$ be the number entering Lemma \ref{L:3.3}. The proposition will be established if we show that there is a positive constant $\Lambda$ depending on $\De, \es,$ and~$R$ such that  \be\label{E:5.2}
\pp_u\{\nnn^\es\ge m\}\le \Lambda e^{-\De m} \q\q\text{ for any } u\in B_R \,\, \text{and } m\ge 1.
\ee Indeed, then inequality \ef{E:5.1} will hold with $\al=\De/2$.
To this end,  we consider a   function $V\in L_b(H)$ that vanishes on $\aaa$, equals~$\De$ outside $\aaa_\es$, and satisfies $0\le V\le \De$ on $H$.  
It follows that $\Osc(V)\le \De$.   Moreover, since $V$ vanishes on $\aaa$, we have~$\lm_V=1$ so that inequality \eqref{E:3.4}  
 holds true for this $V$.
    Let us introduce the random variable
$$
\nnn^\es_k(\om)=k\wedge \nnn^\es(\om)
$$and note that
$$
 \PPPP_k^V\mek(u)\ge\e_u  \left\{\ch_{\{\nnn^\es_k\ge m\}}\exp\left(\sum_{n=1}^k V(u_n)\right)\right\}\ge e^{\De m}\pp_u\{\nnn^\es_k\ge m\}.
$$
Letting $k$ go to infinity and using \eqref{E:3.4},  we arrive at \ef{E:5.2}.    Now if Condition~(E) is also fulfilled, by Lemma \ref{L:3.5}, the above $\De>0$ can be chosen arbitrarily large and thus so can be $\al$.
\ep

  \subsection{Kick-forced PDEs}\label{S:5.5}

Theorems \ref{T:1.1}-\ref{T:1.3} can be applied to a large class of dissipative PDEs perturbed by a {\it random kick force}.~In this section,  we  discuss the validity of Conditions~(A)-(E) for the Navier--Stokes, the complex Ginzburg--Landau, and the Burgers~equations.

 \subsubsection{2D Navier--Stokes system} 
  
 Let us consider the 2D Navier--Stokes (NS) system    for incompressible fluids:              \begin{equation} \label{E:5.14}
	\p_t u+\langle u,\nabla\rangle u-\nu\Delta u+\nabla p=\eta(t,x), \quad \diver u=0, \quad x\in D,
\end{equation}
where $D\subset \rr^2$ is a bounded domain     with smooth boundary~$\p D$,  $\nu>0$ is   the viscosity, $u=(u_1,u_2)$ and~$p$ are unknown   velocity field and pressure,~$\eta$ is an external random force, and   $\lag u, \nabla \rag=u_1\partial_1+u_2\partial_2$.  
  We denote by $H$ the  $L^2$-space of divergence free vector fields
$$
H= \left\{u\in L^2(D,\rr^2):\diver u=0\,\,\,\text{in $D$}, \lag u, {{\boldsymbol{\mathit n}}}\rag =0 \mbox{ on $\p D$}\right\}
$$ endowed with the norm $\|\cdot\|$, where  ${{\boldsymbol{\mathit n}}}$ is the outward unit normal   to~$\p D$.~By projecting~\eqref{E:5.14} to~$H$, we   eliminate the pressure and obtain an evolution system for  the velocity field  (e.g., see Section~6 in Chapter~1 of~\cite{lions1969}):
\begin{equation} \label{E:5.15}
\p_t u +B(u) + \nu L  u= \Pi\eta(t,x),  
\end{equation} 
where    $\Pi$ is the   orthogonal projection  onto~$H$ in~$L^2$ (i.e., the Leray projection), $L=-\Pi \Delta$ is the Stokes operator,  and  $B(u)=\Pi(\langle u,\nabla \rangle u)$.  
We assume that $\eta$   
 is a   random kick force  of the form
\begin{align}
\eta(t,x)=\sum_{k=1}^\ty \delta(t-k)\eta_k(x), \label{E:5.16}
 \end{align}
 where $\delta$ is the Dirac measure concentrated at zero and $\eta_k$ are i.i.d.~random variables   in $H$ satisfying Condition~{\rm(D)} with respect to an    orthonormal basis~$\{e_j\}$   formed by the eigenvectors of   $L$.~Under these assumptions, the trajectory~$u_t$ of~\eqref{E:5.15} is   normalised to be right-continuous and it is
   completely determined  by its restriction $u_k$ to integer times.  If we denote by~$S:H\to H$ the time-1 shift along the trajectories of~\eqref{E:5.15} with $\eta=0$, then 
 the sequence~$\{u_k\}$ satisfies \eqref{E:0.1}. The validity of Conditions (A)-(C)  for this system is checked in Section~3.2.4 in~\cite{KS-book}.    
 \begin{proposition}\label{P:5.7} There is a number~$ \nu_*>0$   such that for   $\nu\ge \nu_*$, Condition~{\rm(E)} is satisfied for the  NS system with the metric inherited  from $H$.
\end{proposition}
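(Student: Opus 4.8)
The plan is to verify Condition~(E) for the 2D Navier--Stokes system with the metric $d'(u,v)=\|u-v\|$ inherited from $H$, by showing that the time-1 map $S$ is a nonexpansion on the compact invariant set $\aA(B_{\rho+1})$ once the viscosity~$\nu$ is large enough. First I would recall the standard a priori bound: all relevant trajectories stay in $B_{\rho+1}$, hence in a ball of $H$ of controlled radius, for all integer times, and since $\aA(B_{\rho+1})$ is compact and invariant, it suffices to estimate $\|S(u)-S(v)\|$ for initial data $u,v$ lying in this set. Writing $w(t)=u(t)-v(t)$ where $u,v$ solve the unforced equation~\eqref{E:5.15} with data $u,v$, the difference satisfies
\begin{equation}\label{E:diff}
\p_t w +\nu L w + B(u)-B(v)=0, \quad w(0)=u-v.
\end{equation}
Taking the scalar product with $w$ in $H$, using $\lag L w, w\rag \ge \alpha_1 \|w\|^2$ (with $\alpha_1$ the first eigenvalue of the Stokes operator) and the bilinear estimate
$$
|\lag B(u)-B(v), w\rag| = |\lag B(w,u)+B(v,w),w\rag| = |\lag B(w,u),w\rag| \le C_D \|w\|_{L^4}^2 \|\nabla u\|_{L^2}
$$
together with the interpolation $\|w\|_{L^4}^2 \le C \|w\|\,\|w\|_{V}$ and Young's inequality, one gets a differential inequality of the form
$$
\f{\dd}{\dd t}\|w(t)\|^2 \le -\nu\alpha_1 \|w(t)\|^2 + C\bigl(\|\nabla u(t)\|_{L^2}^2/\nu\bigr)\|w(t)\|^2.
$$

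The key point is then a uniform bound on the enstrophy-type quantity $\int_0^1 \|\nabla u(t)\|_{L^2}^2\dd t$ for trajectories starting in $\aA(B_{\rho+1})$: this follows from the energy balance for the unforced NS equation, $\f12\f{\dd}{\dd t}\|u\|^2 = -\nu\|\nabla u\|_{L^2}^2$, which after integration over $[0,1]$ gives $\nu\int_0^1\|\nabla u(t)\|_{L^2}^2\dd t = \f12(\|u(0)\|^2-\|u(1)\|^2) \le \f12(\rho+1)^2$. Plugging this into the Gronwall argument for $\|w(t)\|^2$, I obtain
$$
\|S(u)-S(v)\|^2 = \|w(1)\|^2 \le \|u-v\|^2 \exp\left(-\nu\alpha_1 + \f{C(\rho+1)^2}{\nu^2}\right),
$$
and choosing $\nu_*$ so that the exponent is $\le 0$ for all $\nu\ge\nu_*$ — i.e. $\nu^3\alpha_1 \ge C(\rho+1)^2$ — yields $\|S(u)-S(v)\|\le\|u-v\|$, which is exactly~\eqref{E:1.7} with $d'=d$. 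Since the topology of $H$ is trivially not strictly stronger than itself, Condition~(E) holds.

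The main obstacle I anticipate is not the Gronwall estimate itself but making the constants genuinely uniform over the compact set $\aA(B_{\rho+1})$ and verifying that the radius $\rho$ entering~(B), and hence the relevant bound $\rho+1$, depends only on the noise (through $\KK$) and not on $\nu$ in a way that would destroy the monotonicity in $\nu$; one must check that $\rho$ can be bounded uniformly for $\nu\ge\nu_*$, which does hold because the dissipativity estimate~\eqref{E:1.2} improves as $\nu$ grows. A secondary technical point is justifying the formal energy computations for the difference $w$ (regularity of solutions, legitimacy of multiplying by $w$), but this is standard for the 2D NS equation with the smoothing of the Stokes semigroup over the unit time interval, and can be cited from Section~3.2.4 of~\cite{KS-book} or the references therein.
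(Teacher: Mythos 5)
Your proposal is correct and follows the same two-step structure as the paper: first ensure the radius $\rho$ in Condition~(B) can be taken uniform for $\nu$ large, then run a Gronwall argument on the unforced difference equation over $\aA(B_{\rho+1})$. The main technical difference lies in the bilinear estimate and the way Gronwall is invoked. The paper bounds $|\lag B(w,u),w\rag|\le C_1\|u\|_1\|w\|_1^2$, writes $\p_t\|w\|^2+2(\nu-C_1\|u\|_1)\|w\|_1^2\le0$, and then passes to $\|w\|^2$ via Poincar\'e and the $\nu$-uniform bound on $\int_0^1\|u\|_1\,\textup ds$. You instead use the sharper 2D interpolation $\|w\|_{L^4}^2\lesssim\|w\|\,\|w\|_1$ to get $|\lag B(w,u),w\rag|\lesssim\|u\|_1\|w\|\,\|w\|_1$, absorb half of it into the dissipation by Young's inequality, and close with the enstrophy bound $\nu\int_0^1\|\nabla u\|^2\,\textup dt\le\tfrac12\|u_0\|^2$. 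Your version is arguably cleaner: passing from the paper's inequality in $\|w\|_1^2$ to one in $\|w\|^2$ via Poincar\'e is only legitimate on time intervals where $\nu\ge C_1\|u(t)\|_1$, which can fail near $t=0$ for initial data in $\aA(B_{\rho+1})$ that are merely $L^2$, whereas the Young's-inequality route sidesteps that sign issue entirely and only requires the time-integrated enstrophy, which is controlled directly by the energy identity. One thing the paper does that you defer to a remark is the explicit verification that $\rho$ (hence $\aA(B_{\rho+1})$) can be bounded uniformly for $\nu\ge1$ — the paper derives $\|u_1\|\le e^{-\alpha_1\nu}\|u_0\|+C$ with a $\nu$-independent $C$ and fixes $\rho=2C(1-e^{-\alpha_1})^{-1}$. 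Your proof should include this explicitly, since the constant $\rho+1$ in your enstrophy bound must not grow with $\nu$, but you correctly identified this as the point to check.
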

\begin{remark}
Let us note that in the case of large viscosity $\nu$, the ergodicity of the Markov process $(u_k, \pp_u)$ associated with \ef{E:5.14} has a quite simple proof; see Exercice 2.5.9 in \cite{KS-book}. It seems however, that this assumption does not lead to an easy   proof of the multiplicative ergodic theorem due to the presence of the potential $V$, which under condition \rm(E) can have an arbitrarily large oscillation.
\end{remark}
\begin{proof}[Proof of Proposition \ref{P:5.7}]  We split the proof
  into two steps.

\smallskip
{\it Step 1.}~Let us first show that the number $\rho$ in Condition (B) can be chosen the same for any $\nu\ge 1$. Indeed, using the inequality   
$$\|S(u_0)\|\le  e^{-\alpha_1 \nu} \|u_0\| \quad \text{ for $u_0\in H$}
$$ and the fact that   $\pP\{\|\eta_1\|\le C\}=1$, we~get
$$
\|u_1\| \le  e^{-\alpha_1 \nu}\|u_0\|+ C 
$$ with probability $1$, where  $\alpha_1$ is the first eigenvalue of $L$ and $C>0$ does  not depend on~$\nu$.~This   shows that the ball $B_\rho$ of radius $\rho\ge C(1-e^{-\alpha_1 \nu})^{-1}$ is invariant for \eqref{E:0.1}. Moreover, if we choose $\rho\ge  2C(1-e^{-\alpha_1 \nu})^{-1}$, then \eqref{E:1.3} is satisfied.  We take $\rho=2C(1-e^{-\alpha_1 })^{-1}$.   

\smallskip
{\it Step 2.} Let us show that
$$
\|S(u_0)-S(v_0)\| \le \|u_0-v_0\| \quad \text{for $u_0,v_0\in \aA(B_{\rho+1})$}
$$
 if   $\nu$ is sufficiently large.~We denote by $u$ and $v$ the   solutions of~\eqref{E:5.15} issued from $u_0$ and $v_0$, respectively. Then  $w=u-v$ satisfies  
\begin{equation}\label{E:5.17}
\dot w + B(w, u) + B(v, w) + \nu Lw = 0,
\end{equation}
where $B(w, u)=\Pi(\langle w,\nabla \rangle u)$. 
Taking the scalar product of \eqref{E:5.17}  with~$2w$ in~$H$, using the   equality $\lag B(v,w),w\rag=0$ and the   estimate
$$
|\lag B(w,u), w\rag| \le C_1 \|u\|_1 \|w\|_1^2,
$$
where $\|\cdot\|_1$ is the norm in the Sobolev space $H^1(D,\rr^2)$, we get
$$
\partial_t\|w\|^2+2(\nu -C_1\|u\|_{1})\|w\|_{1}^2\le0,$$ where $C_1$   does not depend on $\nu$.~By taking the     scalar product of \eqref{E:5.15} for $\eta=0$  with~$2u$ in~$H$, it is easy to see that  
$$
C_2:=\sup_{\nu\ge1,\, u_0\in \aA(B_{\rho+1})} \int_0^1\|u(s)\|_1\dd s <\ty.
$$ Using the previous two inequalities together with the Poincar\'e inequality  and the
  Gronwall lemma, we infer
\begin{align*}
\|w(1)\|^2&\le \|w(0)\|^2\exp\left(-2\alpha_1\nu +   C_1\alpha_1\int_0^1\|u(s)\|_1\dd s\right)\\&\le \|w(0)\|^2 \exp\left(-2\alpha_1\nu    +C_1 \alpha_1 C_2\right) \le \|w(0)\|^2
\end{align*}
 for $\nu\ge \nu_*:=C_1 C_3/2$. Note that we even have a contraction for sufficiently large $\nu$.
\end{proof}

   \subsubsection{Complex Ginzburg--Landau equation} 
The situation is similar for the   complex Ginzburg--Landau  (CGL)    equation:  
\begin{equation}
\p_t u-(\nu+i)\Delta u+ia|u|^2u=\eta(t,x), \quad x\in D, \quad u\big|_{\p D}=0,\label{E:CGL}
\end{equation}
where $\nu,a>0$ are some numbers, $D\subset\rr^3$ is a bounded domain with smooth boundary~$\p D$, $u=u(t,x)$ is a complex-valued function, and~$\eta$ is a kick force   of the form~\eqref{E:5.16}.  We consider this equation in the complex space $H=H_0^1(D)$ endowed with the norm $\|\cdot\|_1$. The random variables   $\eta_k$ are assumed to be i.i.d. in~$H$ and of  the form
$$ 
\eta_k(x)=\sum_{j=1}^\infty b_j(\xi_{jk}^1+i\xi_{jk}^2)e_j(x),
$$
where $\{e_j\}$ is an orthonormal basis in~$H$ formed by the eigenvectors of   the Dirichlet Laplacian,   $b_j>0$ for all $j\ge1$, and $\xi_{jk}^i$ are independent real-valued random variables whose laws possess the properties stated in Condition~(D).  By Proposition 1.7 in \cite{JNPS-2012}, Conditions~(A)-(C) hold for the CGL equation.
  The   following result is an analogue of Propositions~\ref{P:5.7}.    \begin{proposition}  
There is a number~$ \nu_*>0$   such that for   $\nu\ge \nu_*$, Condition~{\rm(E)} is satisfied for the CGL equation with the metric inherited  from $H$.
\end{proposition}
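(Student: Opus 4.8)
The plan is to follow the same two-step strategy used for the Navier--Stokes system in Proposition~\ref{P:5.7}, adapting the energy estimates to the CGL setting. Step~1 is to show that the radius $\rho$ in Condition~(B) can be chosen uniformly for all $\nu\ge 1$. Here one exploits the dissipativity coming from the $-(\nu+i)\Delta u$ term: taking the scalar product of \eqref{E:CGL} with $\eta=0$ against $-\Delta u$ in $L^2$ and using the skew-symmetry of the Hamiltonian part $ia|u|^2u$ (so that $\mathrm{Re}\,\langle ia|u|^2u,-\Delta u\rangle$ contributes a sign-definite or at least controllable term, after integration by parts in three dimensions), one obtains an exponential decay estimate of the form $\|S(u_0)\|_1\le e^{-\alpha_1\nu}\|u_0\|_1$, or at worst $\|S(u_0)\|_1\le q\|u_0\|_1$ with $q=q(\nu)<1$ for $\nu$ large. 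Combined with $\pP\{\|\eta_1\|_1\le C\}=1$ with $C$ independent of $\nu$, this gives $\|u_1\|_1\le q\|u_0\|_1+C$, and choosing $\rho=2C(1-q)^{-1}$ for a fixed reference value of $\nu$ (e.g.\ $\nu=\nu_*$) yields an absorbing invariant ball satisfying \eqref{E:1.3}.

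Step~2 is the core: to show $\|S(u_0)-S(v_0)\|_1\le\|u_0-v_0\|_1$ for $u_0,v_0\in\aA(B_{\rho+1})$ when $\nu$ is large. Write $w=u-v$; subtracting the two copies of \eqref{E:CGL} gives a linear equation for $w$ with coefficients depending on $u$ and $v$,
$$
\p_t w-(\nu+i)\Delta w+ia\bigl(|u|^2u-|v|^2v\bigr)=0.
$$
Take the scalar product with $-2\Delta w$ (i.e.\ work in the $H^1_0$-norm, as that is the phase space). The dissipative term produces $2\nu\|\Delta w\|^2$. The nonlinear term must be bounded by $C_1 \bigl(\|u\|_1^2+\|v\|_1^2+1\bigr)\|\Delta w\|\,\|w\|_1$ or similar, using the pointwise bound $\bigl||u|^2u-|v|^2v\bigr|\le C(|u|^2+|v|^2)|w|$, H\"older, and Sobolev embeddings in dimension three; crucially the constant $C_1$ here does not depend on $\nu$. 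One then needs the time-integrated a priori bound $C_2:=\sup_{\nu\ge1,\,u_0\in\aA(B_{\rho+1})}\int_0^1\bigl(\|u(s)\|_1^2+\|v(s)\|_1^2\bigr)\dd s<\infty$, obtained from the energy estimate of Step~1. Feeding everything into the Gronwall lemma gives
$$
\|w(1)\|_1^2\le\|w(0)\|_1^2\exp\Bigl(-2\alpha_1\nu+C_1\int_0^1(\cdots)\dd s\Bigr)\le\|w(0)\|_1^2\exp(-2\alpha_1\nu+C_1C_2),
$$
which is $\le\|w(0)\|_1^2$ once $\nu\ge\nu_*:=C_1C_2/(2\alpha_1)$.

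The main obstacle I expect is the treatment of the cubic nonlinearity in dimension three when estimating in the $H^1$-norm: unlike the 2D Navier--Stokes case where $|\langle B(w,u),w\rangle|\le C\|u\|_1\|w\|_1^2$ suffices, here one is differentiating a cubic term in a borderline Sobolev regime ($H^1_0(D)\hookrightarrow L^6$ in $\rr^3$), so one must be careful to close the estimate using only $\|w\|_1$, $\|\Delta w\|$, and integrable-in-time norms of $u,v$, absorbing the top-order term into the $2\nu\|\Delta w\|^2$ coming from dissipation. A secondary point requiring care is that $\aA(B_{\rho+1})$ must be shown to consist of sufficiently regular data for the a priori estimates to make sense, but this follows from the smoothing of the CGL flow and Conditions~(A)--(C), already verified for CGL via Proposition~1.7 in~\cite{JNPS-2012}. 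Once these estimates are in place, the argument is a verbatim parallel of the Navier--Stokes proof, and Condition~(E) holds with $d'$ the metric inherited from $H=H^1_0(D)$.
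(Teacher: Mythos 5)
Your proposal takes essentially the same two-step route as the paper's proof: a $\nu$-uniform absorbing ball from the standard CGL a priori estimates (the paper simply cites inequalities (1.36)--(1.37) of \cite{JNPS-2012} rather than the ``skew-symmetry'' heuristic you sketch, which in fact does not make the cubic term sign-definite in $H^1$), followed by the $H^1$-energy estimate for $w=u-v$, controlling $\||u|^2u-|v|^2v\|$ via H\"older and $H^1_0(D)\hookrightarrow L^6$, absorbing into $-2\nu\|\Delta w\|^2$, and concluding with Poincar\'e and Gronwall. The one small discrepancy is cosmetic: after Young's inequality the Gronwall exponent involves $\int_0^1(\|u\|_1+\|v\|_1)^4\,\Dd s$, so the paper uses the stronger $\sup_{\nu\ge1,\,t\in[0,1],\,u_0\in\aA(B_{\rho+1})}\|u(t)\|_1<\infty$ rather than your quoted $L^2$-in-time bound on $\|u\|_1^2$, which by itself would not control the quartic term.
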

\begin{proof}[Sketch of the proof]   Inequalities (1.36) and (1.37)  in \cite{JNPS-2012}, combined with the arguments of Step 1 of the previous proof, show that the number $\rho$ in Condition~(B) can be chosen the same for any $\nu\ge 1$. We   check that for sufficiently large~$\nu$, 
$$
\|S(u_0)-S(v_0)\|_1 \le \|u_0-v_0\|_1 \quad \text{for $u_0,v_0\in \aA(B_{\rho+1}),$}
$$where $S:H\to H$ is the time-1 shift for  \eqref{E:CGL} with  $\eta=0$. Let $u$ and $v$ be the   solutions of~\eqref{E:CGL} issued from $u_0$ and $v_0$. Then  $w=u-v$ satisfies  
$$
\p_t w-(\nu+i)\Delta w+ia(|u|^2u-|v|^2v) = 0.
$$
 Multiplying this equation by~$2 \,\Delta w$ and integrating, we get 
\begin{align}
\p_t \|w\|_1^2&=2{\textup Re}\int_D\nabla\dot w\cdot\nabla\bar w\,\dd x
=-2{\textup Re}\int_D\dot w\,\Delta\bar w\,\dd x\notag\\
&=-2 {\textup Re}\int_D ((\nu+i)\Delta w-ia (|u|^2u-|v|^2v))\,\Delta\bar w\,\dd x \notag\\
&\le -2\nu\,\|\Delta w\|^2+2a\,\bigl\||u|^2u-|v|^2v\bigr\|\,\|\Delta w\|,
\label{E:5.19}
\end{align}where $\|\cdot\|$ is the $L^2$-norm.
The H\"older inequality and the   embedding $H_0^1\subset L^6$ imply that
$$
\bigl\||u|^2u-|v|^2v\bigr\|
\le C_1\bigl(\|u\|_1+\|v\|_1\bigr)^2\,\|w\|_1.
$$
Substituting this into~\eqref{E:5.19} and using the Poincar\'e inequality, we obtain
$$
\p_t\|w\|_1^2\le 
-\bigl(\nu\alpha_1-C_1(\|u\|_1+\|v\|_1)^4\bigr)\,\|w\|_1^2.
$$
 The Gronwall lemma and the standard inequality
 $$
C_2:=\sup_{\nu\ge1,\, t\in [0,1],\, u_0\in \aA(B_{\rho+1})}  \|u(t)\|_1\dd s <\ty 
$$ imply that
\begin{align*}
\|w(1)\|_1^2
&\le \exp\biggl(-\nu\alpha_1
+C_1\int_0^1(\|u_1\|_1+\|u_2\|_1)^4\dd s\biggr)\,\|w(0)\|_1^2\\
&\le \exp\bigl(-\nu\alpha_1+16\, C_1C_2^4 \bigr)\,\|w(0)\|_1^2\le \|w(0)\|_1^2
\end{align*}
 for $\nu\ge \nu_*:=16 \, C_1C_2^4/\alpha_1$.
\end{proof}

   \subsubsection{Burgers equation} 
   Let us consider the Burgers equation on the circle ${\mathbb S}=\rr/2\pi\Z:$
$$
\p_tu-\nu\p_x^2u+u\p_xu=\eta(t,x), 
$$
where   $\nu>0$  and $\eta$ is   of the form \eqref{E:5.16} with    i.i.d. random variables $\{\eta_k\}$   in
$$
H=\left\{u\in L^2({\mathbb S},\rr):\int_{\mathbb S} u(x)\dd x=0\right\}
$$ 
satisfying Condition (D) with an orthonormal basis $\{e_j\}$   formed by the eigenvectors of the  periodic Laplacian. The verification of Conditions (A)-(C) is similar to the case of the NS system.
\begin{proposition}       Condition  {\rm (E)}  is satisfied for the   Burgers equation   with any~$\nu>0$ with the metric inherited from $ L^1({\mathbb S},\rr)$. 
 \end{proposition}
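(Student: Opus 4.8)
The natural candidate for the metric in Condition~(E) is $d'(u,v)=\|u-v\|_{L^1(\mathbb S)}$. It is a translation invariant metric on $H$, and since $\mathbb S$ has finite measure, the Cauchy--Schwarz inequality gives $\|u-v\|_{L^1}\le\sqrt{2\pi}\,\|u-v\|$, so the topology of $d'$ is weaker than the one induced by the norm of $H=L^2(\mathbb S)$. Thus the proposition will follow once we establish the $L^1$-contraction estimate
$$
\|S(u_0)-S(v_0)\|_{L^1}\le\|u_0-v_0\|_{L^1},\q u_0,v_0\in H,
$$
which is the classical contractivity of the viscous Burgers flow; observe that the localisation to $\aA(B_{\rho+1})$ in~\eqref{E:1.7} plays no role.

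To prove it, let $u$ and $v$ be the solutions of the Burgers equation with $\eta=0$ issued from $u_0$ and $v_0$; by parabolic smoothing they are smooth on $\mathbb S$ for $t\in(0,1]$, and $w=u-v$ solves $\p_t w-\nu\p_x^2 w+\p_x\bigl(\tfrac{u+v}2\,w\bigr)=0$. First I would fix smooth odd functions $\ph_\es:\rr\to[-1,1]$ with $\ph_\es'\ge0$, $\ph_\es'$ supported in $[-\es,\es]$, and $\ph_\es(s)\to\mathrm{sgn}(s)$, and set $\Phi_\es(s)=\int_0^s\ph_\es$, so that $\Phi_\es(s)\uparrow|s|$ as $\es\to0$. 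Multiplying the equation for $w$ by $\ph_\es(w)$, integrating over $\mathbb S$, and integrating by parts in the last two terms gives
$$
\frac{\Dd}{\Dd t}\int_{\mathbb S}\Phi_\es(w)\dd x+\nu\int_{\mathbb S}\ph_\es'(w)\,(\p_x w)^2\dd x=\int_{\mathbb S}\tfrac{u+v}2\,w\,\ph_\es'(w)\,\p_x w\dd x.
$$
The second term on the left is non-negative, so $\frac{\Dd}{\Dd t}\int_{\mathbb S}\Phi_\es(w)\dd x\le\int_{\mathbb S}\bigl|\tfrac{u+v}2\,\p_x w\bigr|\,|w\,\ph_\es'(w)|\dd x$. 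Since $|s\,\ph_\es'(s)|$ is bounded uniformly in $\es$ and tends to $0$ pointwise, while $\tfrac{u+v}2\,\p_x w$ is integrable on $\mathbb S$ for each $t>0$, integrating in time from $t\in(0,1)$ to $1$ and letting $\es\to0$ yields $\|w(1)\|_{L^1}\le\|w(t)\|_{L^1}$. Letting $t\downarrow0$ and using that $u(t),v(t)\to u_0,v_0$ in $L^2(\mathbb S)$, hence in $L^1(\mathbb S)$, we obtain $\|w(1)\|_{L^1}\le\|w(0)\|_{L^1}$, which is the desired contraction and establishes Condition~(E).

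The only genuinely delicate point is the passage to the limit $\es\to0$ in the $\mathrm{sgn}$-type computation above — the standard Kru\v zkov argument — and it is harmless here because of the instantaneous parabolic regularisation, which makes $w$ smooth for $t>0$ so that all integrations by parts are legitimate. Alternatively, one may simply invoke the well-known $L^1$-contraction property of viscous scalar conservation laws on the torus, which gives the key inequality directly.
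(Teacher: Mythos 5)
Your proof is correct and follows the same route as the paper: both reduce Condition (E) to the classical $L^1$-contraction property
$$\|S(u_0)-S(v_0)\|_{L^1({\mathbb S},\rr)}\le\|u_0-v_0\|_{L^1({\mathbb S},\rr)}$$
of the viscous Burgers semigroup, with the observation that the $L^1$-metric is translation invariant and coarser than the $L^2$-topology. The only difference is that the paper simply cites Section~3.3 of H\"ormander's book for this inequality, whereas you supply the standard Kru\v zkov-type derivation (multiplying by a smoothed sign of $w=u-v$, exploiting the divergence form $\p_x\bigl(\tfrac{u+v}{2}w\bigr)$ of the nonlinear term, and passing to the limit using parabolic smoothing), which is essentially the argument behind the cited inequality.
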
   
This proposition follows   immediately from      inequality 
 $$
\|S(u_0)-S(v_0)\|_{ L^1({\mathbb S},\rr)} \le \|u_0-v_0\|_{ L^1({\mathbb S},\rr)} \quad \text{for any  $u_0,v_0\in H$ and $\nu>0$}
$$ established in Section 3.3 of \cite{hormander1997}.

 \def\cprime{$'$} \def\cprime{$'$}
  \def\polhk#1{\setbox0=\hbox{#1}{\ooalign{\hidewidth
  \lower1.5ex\hbox{`}\hidewidth\crcr\unhbox0}}}
  \def\polhk#1{\setbox0=\hbox{#1}{\ooalign{\hidewidth
  \lower1.5ex\hbox{`}\hidewidth\crcr\unhbox0}}}
  \def\polhk#1{\setbox0=\hbox{#1}{\ooalign{\hidewidth
  \lower1.5ex\hbox{`}\hidewidth\crcr\unhbox0}}} \def\cprime{$'$}
  \def\polhk#1{\setbox0=\hbox{#1}{\ooalign{\hidewidth
  \lower1.5ex\hbox{`}\hidewidth\crcr\unhbox0}}} \def\cprime{$'$}
  \def\cprime{$'$} \def\cprime{$'$} \def\cprime{$'$}

\end{document}